\title{Discrete conformal maps and ideal hyperbolic
  polyhedra}
\author[A I Bobenko]{Alexander I.~Bobenko}
\address{Technische Universit\"at Berlin\\
  Institut f\"ur Mathematik\\
  Strasse des 17. Juni 136\\
  10623 Berlin, Germany
}
\email{bobenko@math.tu-berlin.de}
\urladdr{http://page.math.tu-berlin.de/~bobenko}
\author[U Pinkall]{Ulrich Pinkall}
\email{pinkall@math.tu-berlin.de}
\urladdr{http://page.math.tu-berlin.de/~pinkall}
\author[B A Springborn]{Boris A.~Springborn}
\email{boris.springborn@tu-berlin.de}
\urladdr{http://page.math.tu-berlin.de/~springb}
\newcounter{problemprefix}
\theoremstyle{definition}
\newtheorem{condition}{Condition}
\theoremstyle{plain}
\newtheorem{theorem}{Theorem}[subsection]
\newtheorem{lemma}[theorem]{Lemma}
\newtheorem{proposition}[theorem]{Proposition}
\newtheorem{corollary}[theorem]{Corollary}
\theoremstyle{definition}
\newtheorem{definition}[theorem]{Definition}
\newtheorem{problem}[theorem]{Problem}
\newtheorem{remark}[theorem]{Remark}
\newtheorem*{example*}{Example}
\theoremstyle{remark}
\newtheorem*{warning*}{Warning}
\numberwithin{equation}{section}
\newcommand{\RP}{\R\mathrm{P}}
\newcommand{\T}{{\mathsf T}}
\newcommand{\cclass}{{\mathcal C}}
\newcommand{\Teich}{{\mathcal T}}
\newcommand{\ccr}{\operatorname{cr}}
\newcommand{\lcr}{\operatorname{lcr}}
\newcommand{\Eint}{E_{\mathit{int}}}
\newcommand{\Ebdy}{E_{\mathit{bdy}}}
\newcommand{\Vint}{V_{\mathit{int}}}
\newcommand{\Vbdy}{V_{\mathit{bdy}}}
\newcommand{\ML}{\mbox{\fontencoding{OT2}\fontfamily{wncyr}\fontseries{m}\fontshape{n}\selectfont L}}
\newcommand{\const}{\textit{const.}}
\newcommand{\grad}{\operatorname{grad}}
\newcommand{\degrees}{^{\circ}}
\newcommand{\im}{\operatorname{Im}}
\newcommand{\arsinh}{\operatorname{arsinh}}
\newcommand{\arccot}{\operatorname{arccot}}
\newcommand{\Ehyp}{E^{\mathit{h}}}
\newcommand{\Vhyp}{V_{\mathit{h}}}
\newcommand{\Vhathyp}{\widehat{V}_{\mathit{h}}}
\begin{document}

\begin{abstract}
  We establish a connection between two previously unrelated topics: a
  particular discrete version of conformal geometry for triangulated
  surfaces, and the geometry of ideal polyhedra in hyperbolic
  three-space.  Two triangulated surfaces are considered discretely
  conformally equivalent if the edge lengths are related by scale
  factors associated with the vertices. This simple definition leads
  to a surprisingly rich theory featuring M\"obius invariance, the
  definition of discrete conformal maps as circumcircle preserving
  piecewise projective maps, and two variational principles. We show
  how literally the same theory can be reinterpreted to address the
  problem of constructing an ideal hyperbolic polyhedron with
  prescribed intrinsic metric. This synthesis enables us to derive a
  companion theory of discrete conformal maps for hyperbolic
  triangulations. It also shows how the definitions of discrete
  conformality considered here are closely related to the established
  definition of discrete conformality in terms of circle packings.

\end{abstract}

\maketitle

\section{Introduction}
\label{sec:intro}

Recall that two Riemannian metrics $g$ and $\tilde g$ on a smooth
manifold~$M$ are called \emph{conformally equivalent} if
\begin{equation}
  \label{eq:tilde_g}
  \tilde g = e^{2u}g
\end{equation}
for a function $u\in C^{\infty}(M)$. In the discrete theory that we
consider here, smooth manifolds are replaced with triangulated
piecewise euclidean manifolds, and the discrete version of a conformal
change of metric is to multiply all edge lengths with scale factors
that are associated with the vertices
(Definition~\ref{def:d_conf_equiv}). Apparently, the idea to model
conformal transformations in a discrete setting by attaching scale
factors to the vertices appeared first in the four-dimensional
Lorentz-geometric context of the Regge
calculus~\cite{roek_quantization_1984}. The Riemann-geometric version
of this notion appeared in Luo's work on ``combinatorial Yamabe
flow''~\cite{luo_combinatorial_2004}. He showed that this flow is the
gradient flow of a locally convex function. Later, an explicit formula
for this function was found ($E_{\T,\Theta,\lambda}$ defined in
equation~\eqref{eq:E_of_lambda}, with $\Theta=0$), and this lead to an
efficient numerical method to compute discrete conformal maps,
suitable for applications in computer
graphics~\cite{springborn_conformal_2008}. (Some basic theory of
conformal equivalence and conformal maps in
Section~\ref{sec:discrete_conformal} and the first variational
principle in Section~\ref{sec:variational_principles} are already
covered or at least touched upon in this earlier paper.) The
variational principles described in
Section~\ref{sec:variational_principles} reduce the discrete conformal
mapping problems described in Section~\ref{sec:mapping_problems} to
problems of convex optimization. Figures~\ref{fig:rect_map}
and~\ref{fig:discrete_riemann} show examples of discrete conformal
maps that were obtained this way.
\begin{figure}
  \centering
  \includegraphics[width=0.495\textwidth]{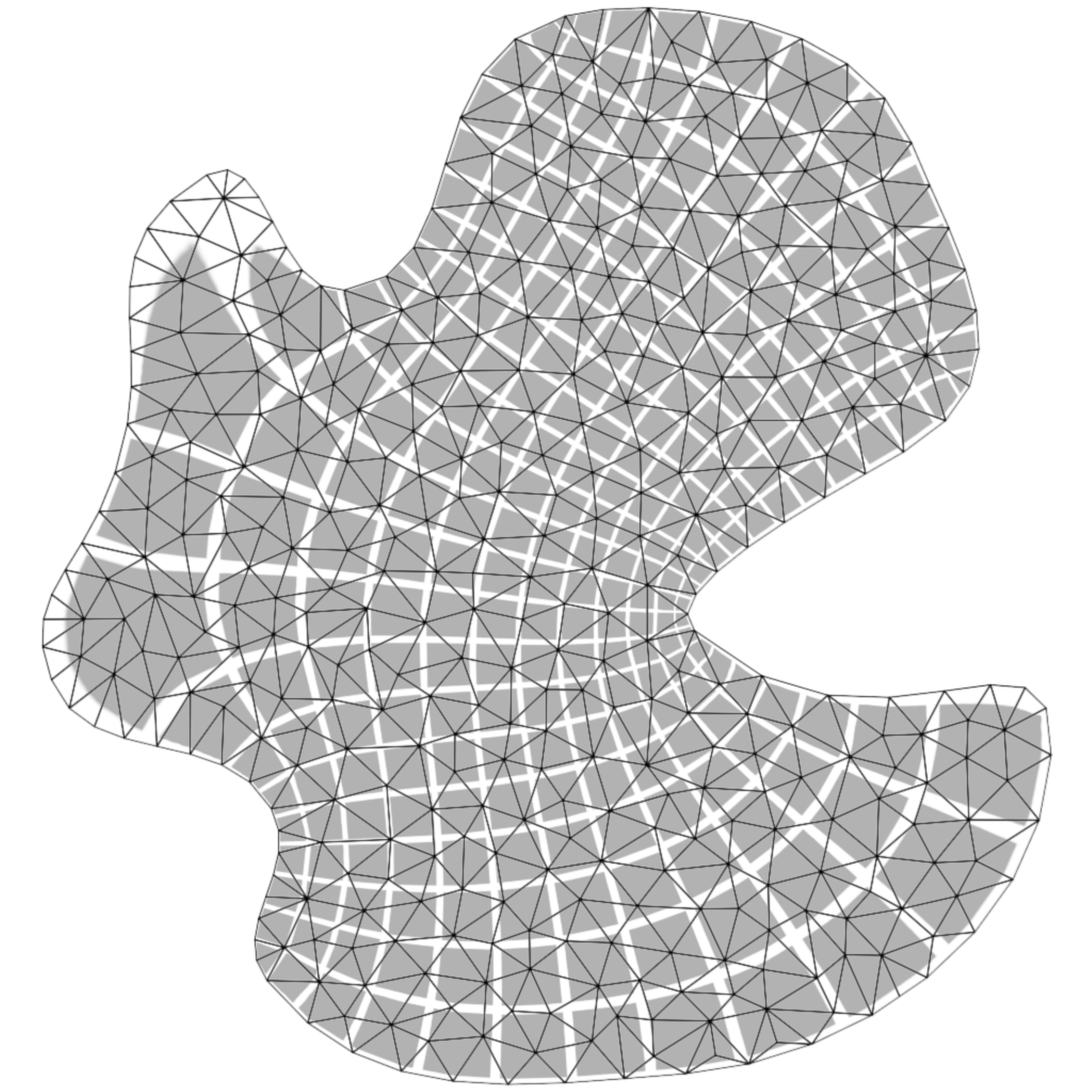}%
  \includegraphics[width=0.495\textwidth]{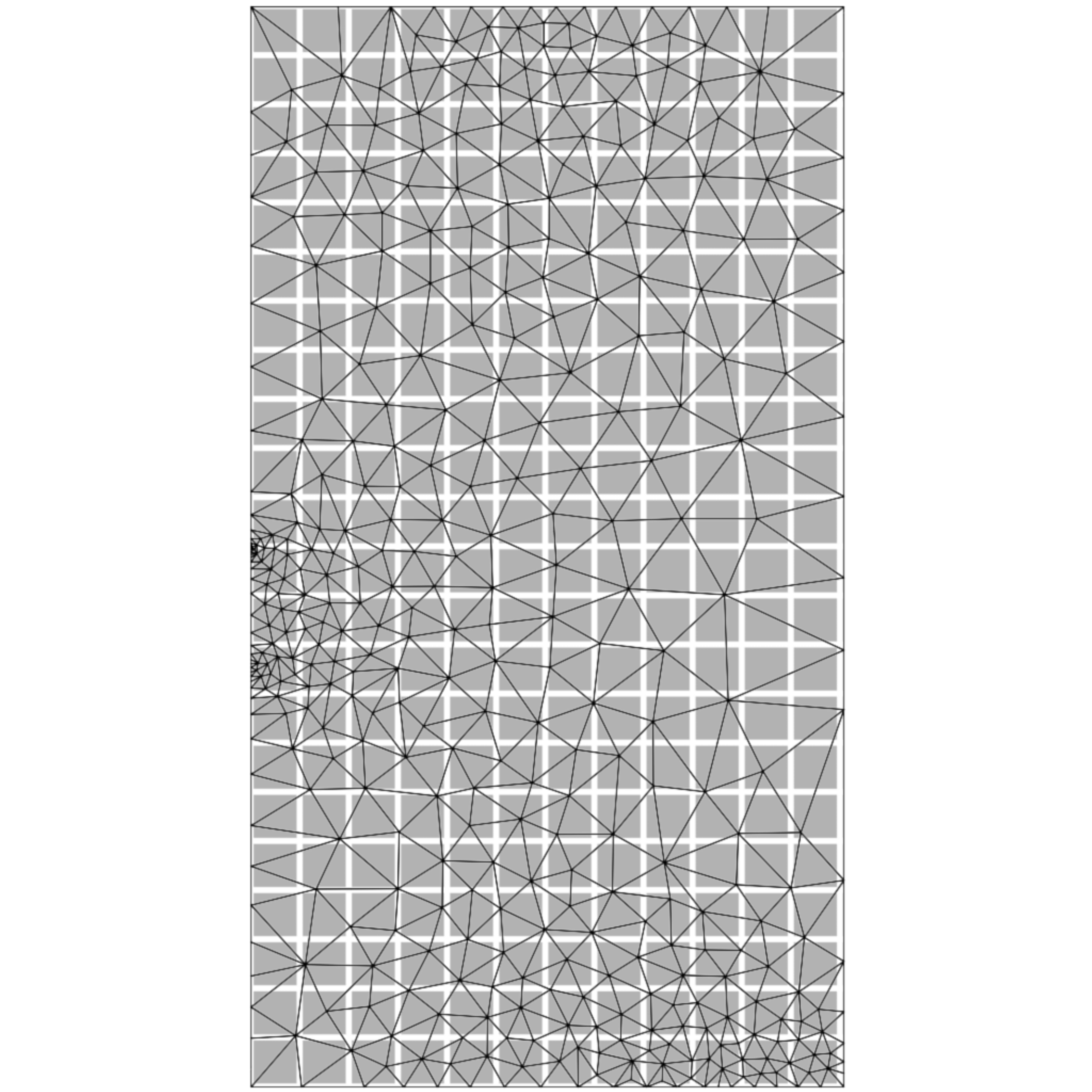}
  \caption{Discrete conformal map to a rectangle.}
  \label{fig:rect_map}
\end{figure}
\begin{figure}
  \centering
  \includegraphics[width=0.45\textwidth]{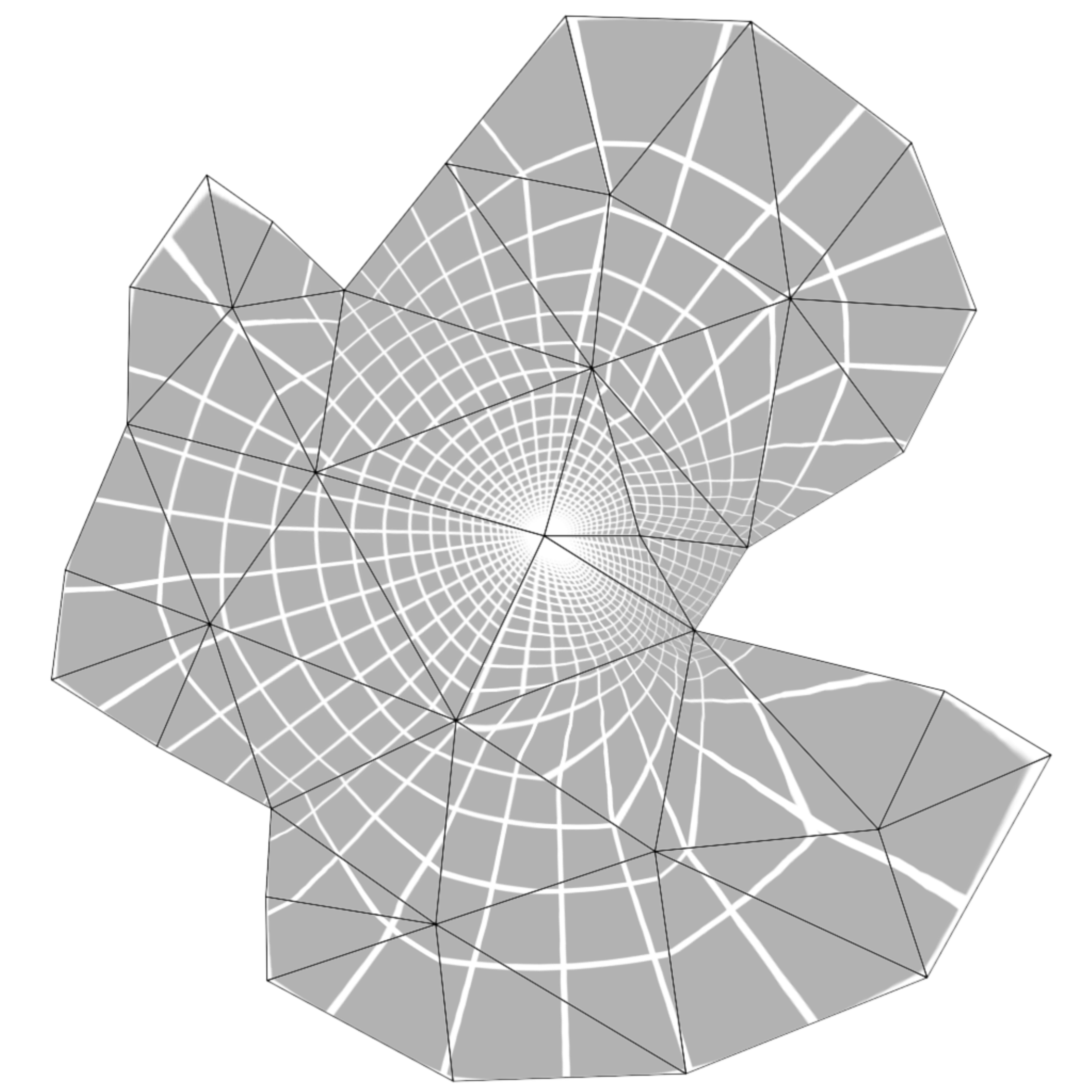}%
  \includegraphics[width=0.45\textwidth]{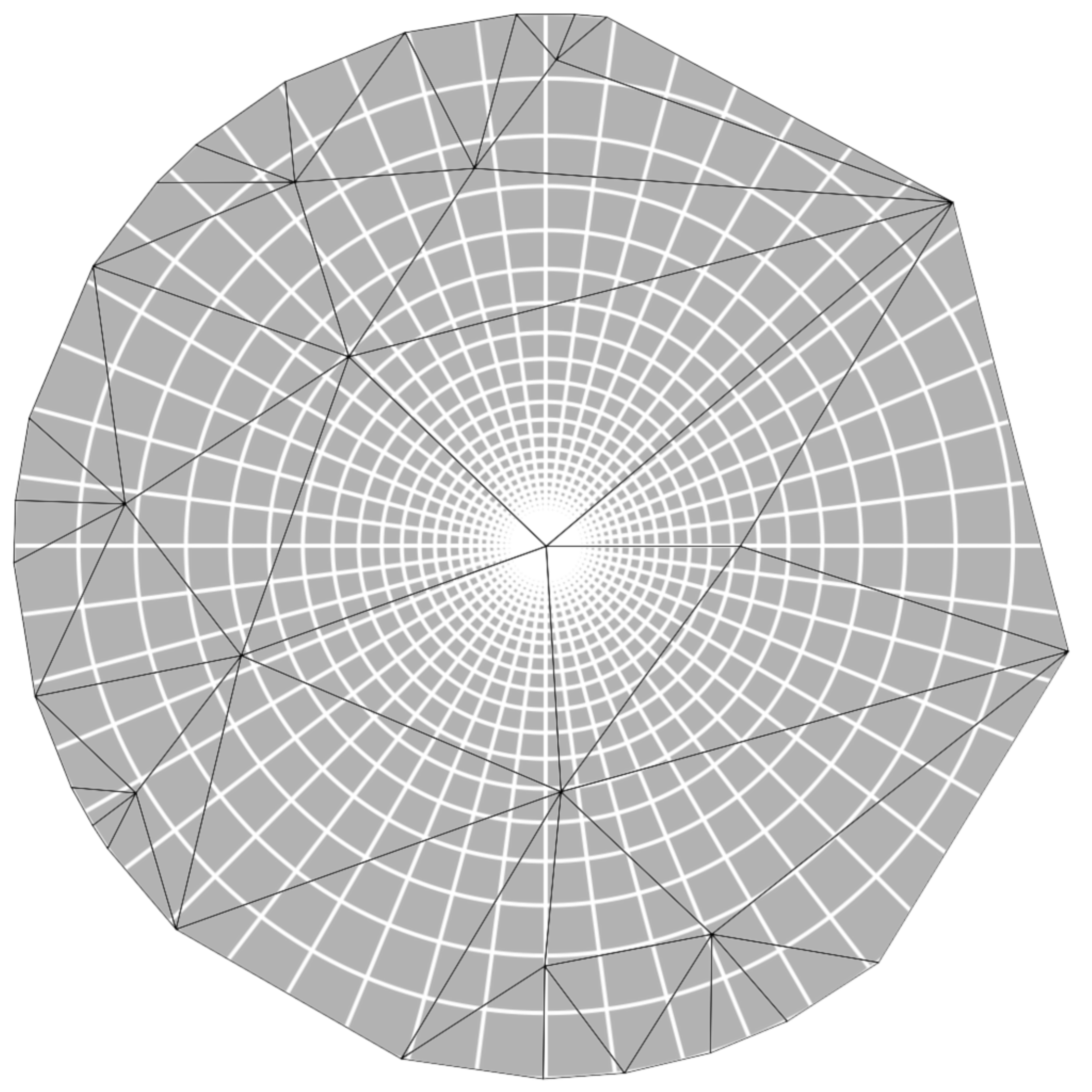}\\
  \includegraphics[width=0.45\textwidth]{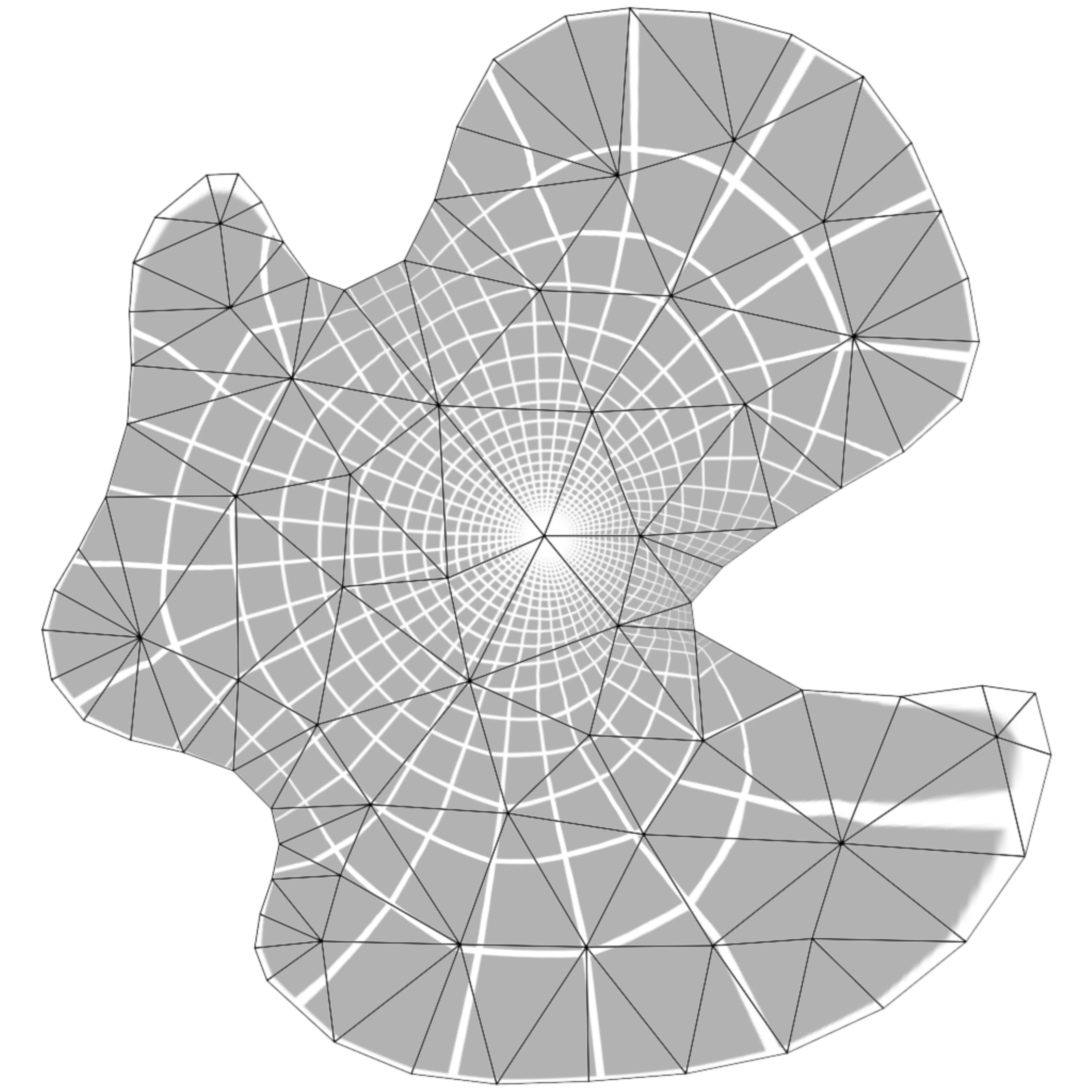}%
  \includegraphics[width=0.45\textwidth]{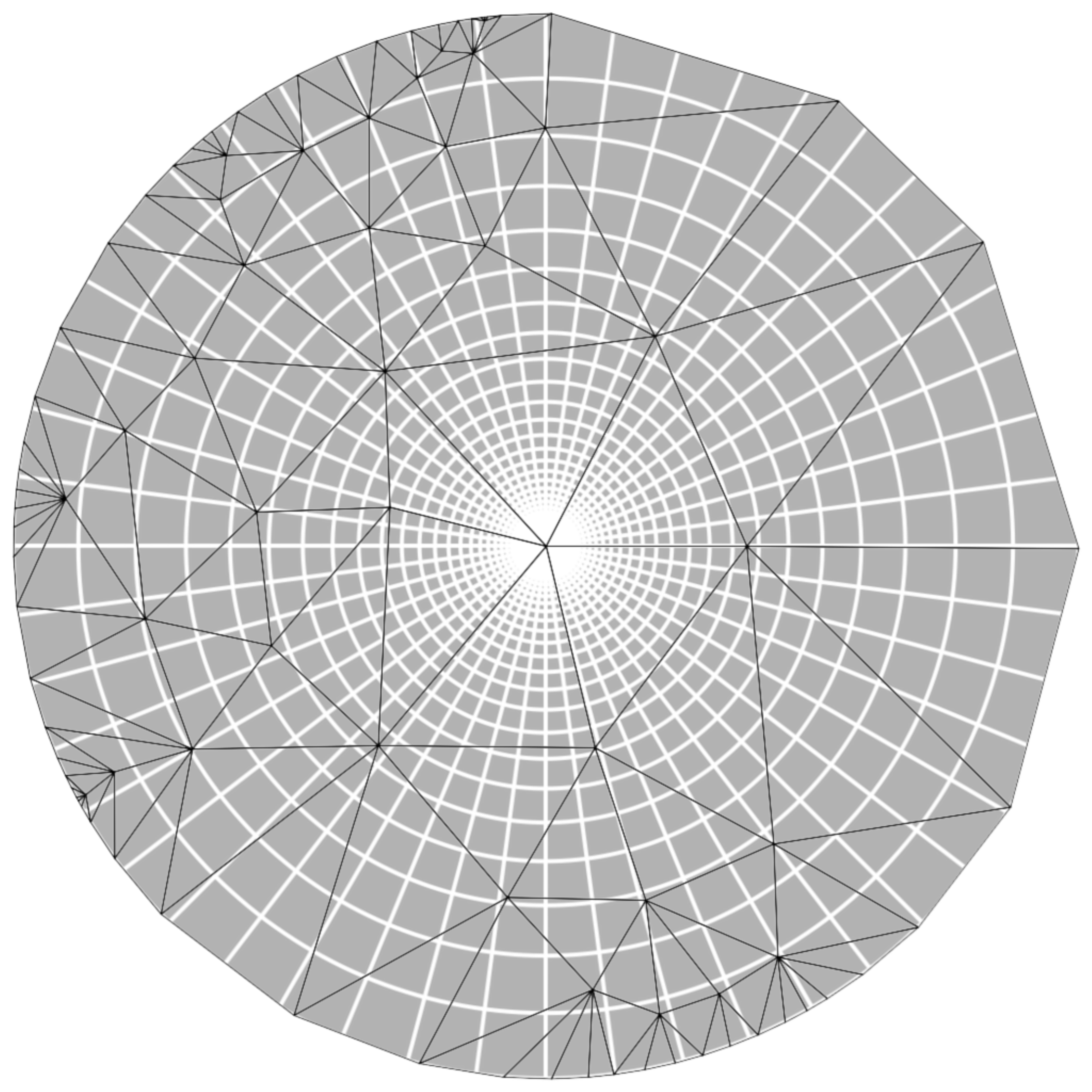}\\
  \includegraphics[width=0.45\textwidth]{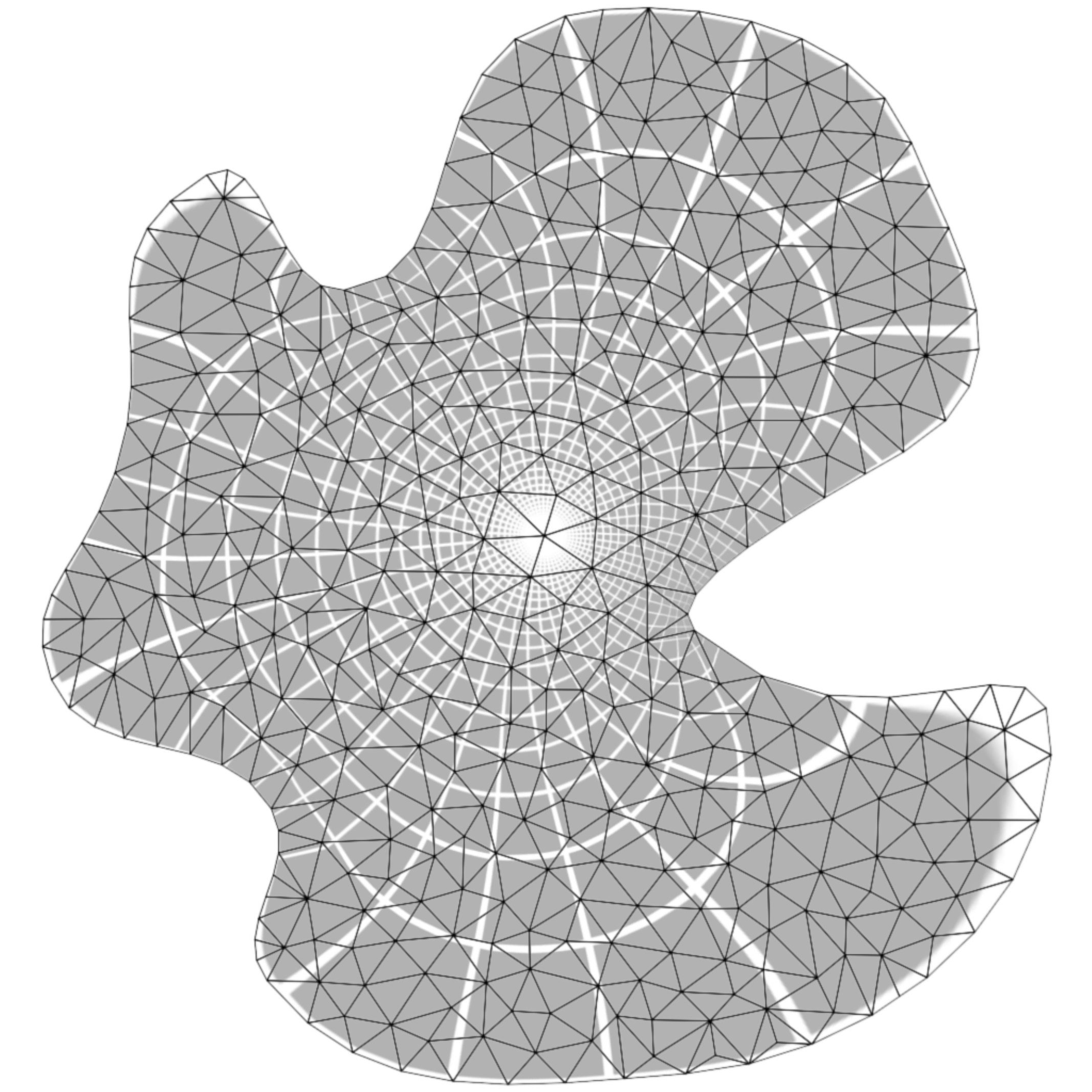}%
  \includegraphics[width=0.45\textwidth]{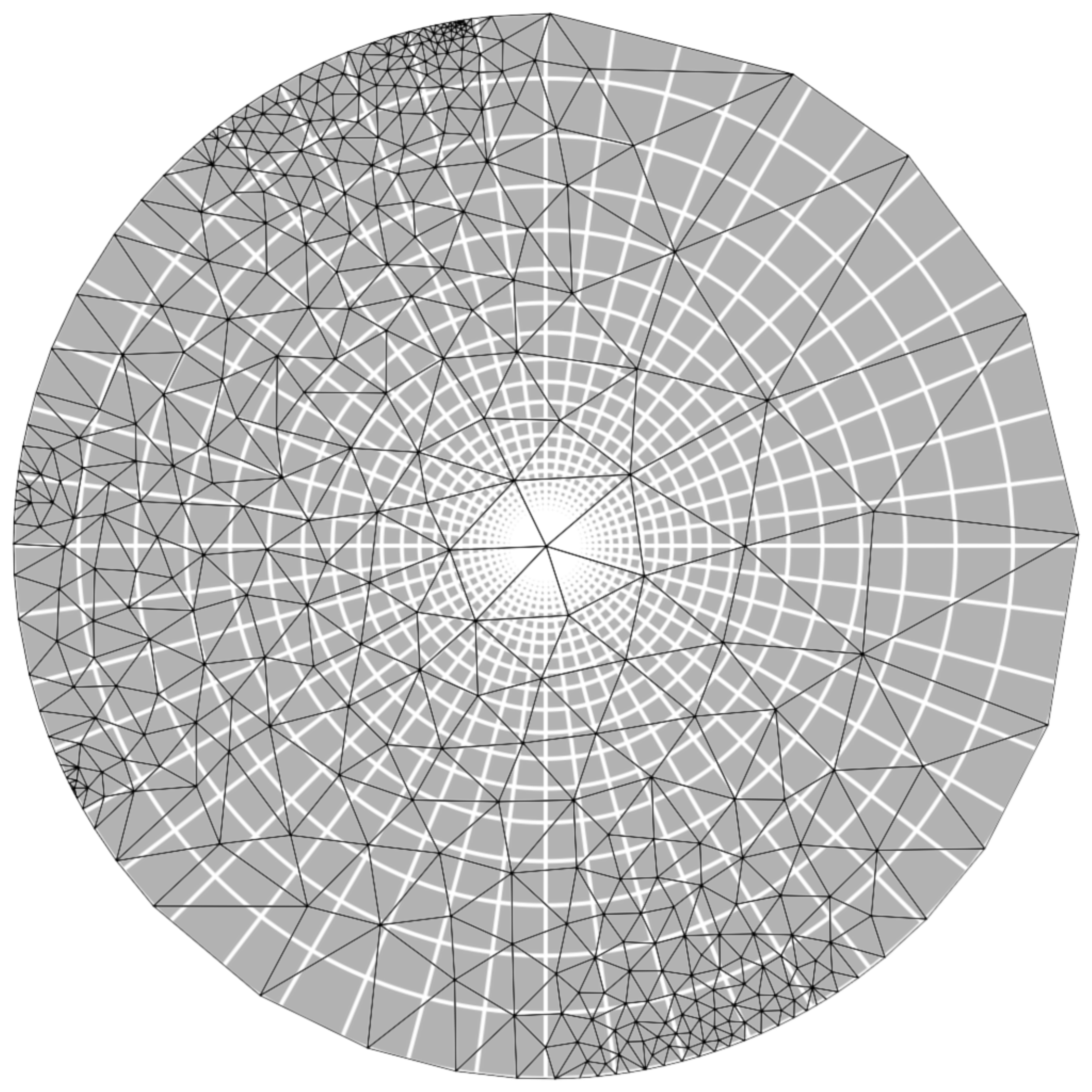}
  \caption{Discrete Riemann maps.}
  \label{fig:discrete_riemann}
\end{figure}%
M\"obius transformations preserve the discrete conformal class
(Section~\ref{sec:Moebius}), and this makes it possible to construct
discrete conformal maps to regions bounded by circular polygons
(Section~\ref{sec:disk}), discrete analogs of the classical
Riemann maps.

The first variational principle
(Section~\ref{sec:variational_principle_1}) involves a function of the
(logarithmic) scale factors $u$. The second variational principle
(Section~\ref{sec:variational_principle_2}) involves a function of the
triangle angles. The two variational principles are Legendre duals in
a precise way, but we do not dwell on this point. The corresponding
variational principles of the classical smooth theory are discussed in
Appendix~\ref{sec:smooth}.

There are clear signs in Sections~\ref{sec:discrete_conformal}
and~\ref{sec:variational_principles} that indicate a connection with
hyperbolic geometry: the appearance of Milnor's Lobachevsky function
$\ML(x)$, the fact that the second variational principle is almost the
same as Rivin's variational principle for ideal hyperbolic polyhedra
with prescribed dihedral angles~\cite{rivin_euclidean_1994}, and the
definition of discrete conformal maps in terms of circumcircle
preserving piecewise projective functions
(Section~\ref{sec:discrete_conformal_maps}). This connection with two-
and three-dimensional hyperbolic geometry is the topic of
Section~\ref{sec:ideal_polyhedra}. Reversing a construction of
Penner~\cite{penner_decorated_1987} \cite{epstein_euclidean_1988}, we
equip a triangulated piecewise euclidean surface with a canonical
hyperbolic metric with cusps. Discrete conformal maps are precisely
the isometries with respect to this hyperbolic metric (Section~\ref{sec:hyp_struct_on_euc_triang}). The logarithmic
edge lengths $\lambda$
(Section~\ref{sec:discrete_conformal_equivalence}) and the
length-cross-ratios that characterize a discrete conformal class
(Section~\ref{sec:length-cross-ratio}) are Penner coordinates and
shear coordinates, respectively, of the corresponding hyperbolic
surface (Section~\ref{sec:penner_and_shear}). The problem of
flattening a triangulation discretely conformally is equivalent to
constructing an ideal hyperbolic polyhedron with prescribed intrinsic
metric
(Section~\ref{sec:ideal_hyp_poly_with_prescribed_intrinsic_metric}). With
this interpretation of discrete conformality in terms of
three-dimensional hyperbolic geometry, the two variational principles
of Section~\ref{sec:variational_principles} are seen to derive from
Schl\"afli's differential volume formula and Milnor's equation for the
volume of an ideal tetrahedron (Section~\ref{sec:hyperbolic_volume}).

Once this connection between discrete conformality and hyperbolic
polyhedra is established, it is straightforward to obtain a modified
version of discrete conformality that pertains to triangulations
composed of hyperbolic triangles instead of euclidean ones
(Section~\ref{sec:dconf_hyperbolic}). This is the theory of discrete
conformal uniformization of triangulated higher genus surfaces over
the hyperbolic plane. It has been applied, for example, for the
hyperbolization of euclidean
ornaments~\cite{von_gagern_hyperbolization_2009}. (It is equally
straightforward to obtain a corresponding theory for spherical
triangulations, but the functions involved in the corresponding
variational principles are not convex. We do not pursue this branch of
the theory here.)

The connection with hyperbolic polyhedra entrains a connection between
the discrete notion of conformality considered here and circle
patterns, another discretization of the same concept. Thurston
introduced patterns of circles as an elementary geometric
visualization of hyperbolic polyhedra~\cite[Chapter
13]{thurston_geometry_????}. He rediscovered Koebe's circle packing
theorem~\cite{koebe_kontaktprobleme_1936} and showed that it followed
from Andreev's work on hyperbolic
polyhedra~\cite{andreev_convex_1970-1} \cite{andreev_convex_1970}, see
also~\cite{roeder_andreevs_2007}. Thurston's conjecture that circle
packings could be used to approximate the classical Riemann map, which
was later proved by Rodin and Sullivan~\cite{rodin_convergence_1987},
set off a flurry of research that lead to a full-fledged theory of
discrete analytic functions and conformal maps based on packings and
patterns of circles~\cite{stephenson_introduction_2005}. (The circle
packing version of Luo's ``combinatorial Yamabe flow'' is the
``combinatorial Ricci flow'' of Chow and
Luo~\cite{chow_combinatorial_2003} \cite{gu_computational_2008}.) The
relationship between these two theories of discrete conformality is
now clear: The circle packing theory deals with hyperbolic polyhedra
with \emph{prescribed dihedral angles} and the notion of discrete
conformality considered here deals with hyperbolic polyhedra with
\emph{prescribed metric}. In Section~\ref{sec:circle_patterns} of the
appendix we discuss the relationship between the variational
principles for discrete conformal maps
(Section~\ref{sec:variational_principles}) and two variational
principles for circle patterns. One is due to
Rivin~\cite{rivin_euclidean_1994} (see also the recent survey article
by Futer and Gu\'eritaud~\cite{futer11:_from}, which provides a wealth
of material that is otherwise difficult to find), and the other is
again related to it by the same sort of singular Legendre
duality~\cite{bobenko04:_variat_princ_for_circl_patter}. Variational
principles for circle patterns are important in discrete differential
geometry in particular for constructing discrete minimal
surfaces~\cite{bobenko06:_minim_surfac_from_circl_patter}.
Instead of triangulations one can consider meshes composed of polygons
that are inscribed in circles (Section~\ref{sec:circular_meshes}), and
we consider the problem to map multiply connected domains to domains
bounded by polygons inscribed in circles, a discrete version of circle
domains (Section~\ref{sec:circle_domains}).

Two important questions are not addressed in this paper. The first is
the question of convergence. Of course we do believe that (under not
too restrictive assumptions that have yet to be worked out) discrete
conformal maps approximate conformal maps if the triangulation is fine
enough. Figure~\ref{fig:discrete_riemann} clearly suggests that a
version of the Rodin--Sullivan theorem~\cite{rodin_convergence_1987}
also holds in this case. But all this has yet to be proved. 

The other question concerns the solvability of the discrete conformal
mapping problems of Section~\ref{sec:mapping_problems}. A solution may
not exist due to violated triangle inequalities. Fairly obvious
necessary conditions and how they relate to properties of the function
$E_{\T,\Theta,\lambda}$ appearing in the first variational principle
are discussed in Appendix~\ref{sec:nec_cond_exist}. In the numerous
numerical experiments that we have made, we have observed that a
solution exists if the necessary conditions are satisfied, no
triangles are almost degenerate to begin with, and the triangulation
is not too coarse. But to find necessary and sufficient conditions for
solvability seems to be an intractable problem in this setting. After
all, this would amount to giving necessary and sufficient conditions
for the existence of a (not necessarily convex) ideal hyperbolic
polyhedron with prescribed intrinsic metric and prescribed
combinatorial type. The way out is to restrict oneself to convex
polyhedra while widening the concept of discrete conformal map to
allow for combinatorial changes
(Section~\ref{sec:hyp_struct_on_euc_triang}). Rivin proved that any
hyperbolic metric with cusps on the sphere is realized by a unique
ideal polyhedron~\cite{rivin_intrinsic_1994}. This translates into an
existence statement for discrete conformal maps. 
(Conversely, this suggests a variational proof of Rivin's theorem
very similar to the recent constructive
proof~\cite{bobenko_alexandrovs_2008} of Alexandrov's classical
polyhedral realization theorem~\cite{alexandrov05:_convex_polyh}.)

Previous versions of this article have been available as preprint
\href{http://arxiv.org/abs/1005.2698}{arXiv:1005.2698} since May
2010. For the published version, the text has been restructured
according to the suggestions of the referee. The mathematical content
has not changed.

\section{Discrete conformal equivalence and maps}
\label{sec:discrete_conformal}

\subsection{Discrete conformal equivalence}
\label{sec:discrete_conformal_equivalence}

A \emph{surface} is a connected $2$-dimensional manifold, possibly
with boundary. A \emph{surface triangulation}, or \emph{triangulation}
for short, is a surface that is a CW complex whose faces ($2$-cells)
are triangles which are glued edge-to-edge. We will denote the sets of
vertices ($0$-cells), edges ($1$-cells), and faces of a
triangulation~$\T$ by $V_{\T}$, $E_{\T}$, and $T_{\T}$, and we will
often drop the subscript $\T$ if the triangulation is clear from the
context. We will also write $A_{\T}$ for the set of triangle angles,
where angles means corners, or triangle-vertex incidences, not angle
measures. 

A \emph{euclidean surface triangulation}, or \emph{euclidean
  triangulation} for short, is a surface triangulation equipped with a
metric so that $\T\setminus V_{\T}$ is locally isometric to the
euclidean plane, or half-plane if there is boundary, and the edges are
geodesic segments. In other words, a euclidean surface triangulation
is a surface consisting of euclidean triangles that are glued
edge-to-edge. At the vertices, the metric may have cone-like
singularities.

A euclidean triangulation is uniquely determined by a triangulation
$\T$ and a function $\ell:E_{\T}\rightarrow\R_{>0}$
assigning a length to every edge in such a way that the triangle
inequalities are satisfied for every triangle in $T_{\T}$. We
call such a positive function $\ell$ on the edges that satisfies all
triangle inequalities a \emph{discrete metric} on $\T$, and we
denote the resulting euclidean triangulation by $(\T, \ell)$.

In this paper, we will assume for simplicity that the triangulations
are simplicial complexes. This means that a triangle may not be glued
to itself at a vertex or along an edge, and the intersection of two
triangles is either empty or it consists of one vertex or one
edge. This restrictions to simplicial complexes allows us to use
simple notation: we will denote by $ij$ the edge with vertices $i$ and
$j$, by $ijk$ the triangle with vertices $i$, $j$, and $k$, and by $
\begin{smallmatrix}
  i\\jk
\end{smallmatrix}
$ 
the corner at vertex
$i$ in triangle $ijk$. If $f,g,h$, and $\phi$ are functions on $V$,
$E$, $T$, and $A$, respectively, we will write $f_{i}$, $g_{ij}$,
$h_{ijk}$, and $\phi_{jk}^{i}$ for $f(i)$, $g(ij)$, $h(ijk)$, and
$\phi(
\begin{smallmatrix}
  i\\jk
\end{smallmatrix}
)$.  But while this restriction to simplicial complexes is
notationally very convenient, it is \emph{a priori} uncalled
for. There are a few exceptions, like
Sections~\ref{sec:sphere} and~\ref{sec:disk} on mapping to the sphere and disk,
but in general the domain of validity of the theory presented here
extends beyond the simplicial case.

The vector spaces of real-valued functions on the sets of vertices,
edges, and angles will be denoted by $\R^{V}$, $\R^{E}$, and $\R^{A}$,
respectively.

\begin{definition}[Luo \cite{luo_combinatorial_2004}]
  \label{def:d_conf_equiv}
  Two combinatorially equivalent euclidean triangulations,
  $(\T, \ell)$ and $(\T, \tilde\ell)$, are
  \emph{discretely conformally equivalent} if the discrete metrics
  $\ell$ and $\tilde\ell$ are related by
  \begin{equation}
    \label{eq:tilde_ell}
    \tilde\ell_{ij}=e^{\frac{1}{2}(u_i+u_j)}\ell_{ij}
  \end{equation}
  for some $u\in \R^{V}$. This defines an equivalence
  relation on the set of discrete metrics on $\T$, of which an
  equivalence classes is called a \emph{discrete conformal class} of
  discrete metrics, or a \emph{discrete conformal structure} on
  $\T$.
\end{definition}

Instead of the edge lengths $\ell$ we will often use the logarithmic
lengths
\begin{equation}
  \label{eq:lambda}
  \lambda = 2\log\ell.
\end{equation}
(The reason for the factor of $2$ will become apparent
in Section~\ref{sec:ideal_polyhedra}.) In terms of these logarithmic lengths,
relation~\eqref{eq:tilde_ell} between $\ell$ and $\tilde\ell$ becomes
linear:
\begin{equation}
  \label{eq:tilde_lambda}
  \tilde\lambda_{ij}=\lambda_{ij} + u_{i} + u_{j}.
\end{equation}

\begin{remark}[Dimension of ``discrete Teichm\"uller space'']
  \label{rem:dim_discrete_Teich}
  The set of all discrete metrics on a triangulation $\T$ is a
  manifold whose dimension is the number of edges, $|E|$. This
  manifold of metrics is fibered by the discrete conformal classes,
  each of which is a submanifold of dimension $|V|$, the number of
  vertices. The corresponding ``discrete Teichm\"uller space'', i.e.,
  the manifold of discrete conformal classes, has dimension
  $|E|-|V|$. If $\T$ triangulates a closed surface of genus $g$, one
  obtains $|E|-|V|=6g-6+2|V|$, which is also the dimension of
  $\Teich_{g,|V|}$, the Teichm\"uller space of a genus $g$ Riemann
  surfaces with $|V|$ punctures. This is no coincidence. The discrete
  conformal classes actually correspond to points in the Teichm\"uller
  space $\Teich_{g,|V|}$ (see
  Section~\ref{sec:hyp_struct_on_euc_triang}).
\end{remark}

\subsection{The two most simple cases}
\label{sec:two_triv_examples}

\noindent (1)\; If the triangulation $\T$ consists of a single
triangle $ijk$, then any two euclidean triangulations $(\T, \ell)$ and
$(\T, \tilde\ell)$ are discretely conformally equivalent, because the
three equations
\begin{equation*}
  \tilde\ell_{ij}=e^{\frac{1}{2}(u_i+u_j)}\ell_{ij},\qquad
  \tilde\ell_{jk}=e^{\frac{1}{2}(u_j+u_k)}\ell_{jk},\qquad
  \tilde\ell_{ki}=e^{\frac{1}{2}(u_k+u_i)}\ell_{ki}
\end{equation*}
always have a unique solution for $u_{i}$, $u_{j}$ and $u_{k}$:
\begin{equation*}
  e^{u_{i}}=
  \frac{\tilde\ell_{ij}\ell_{jk}\tilde\ell_{ki}}
  {\ell_{ij}\tilde\ell_{jk}\ell_{ki}}, 
  \quad\ldots
\end{equation*}

\noindent (2)\; Now let $\T$ be the triangulation consisting of two
triangles $ijk$ and $ilj$ glued along edge $ij$ as shown in
Figure~\ref{fig:lcr}, and let $\ell$ and $\tilde\ell$ be two discrete
metrics on $\T$.
\begin{figure}
  \centering
  \includegraphics{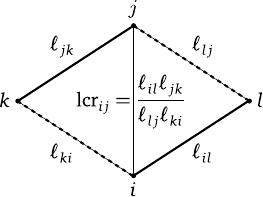}
  \caption{The length-cross-ratio on edge $ij$. The lengths of the
    bold solid and bold dashed edges appear in the numerator and
    denominator, respectively.}
  \label{fig:lcr}
\end{figure}
What is the condition for $(\T, \ell)$ and $(\T,
\tilde\ell)$ to be discretely conformally equivalent? For each
triangle considered separately, the corresponding
equations~\eqref{eq:tilde_ell} determine unique solutions for the
values of $u$ on its vertices. For each of the common vertices $i$ and
$j$ one obtains two values and the necessary and sufficient
condition for discrete conformal equivalence is that they are equal,
which is equivalent to the condition
\begin{equation*}
\frac{\ell_{il}\ell_{jk}}{\ell_{lj}\ell_{ki}}=
\frac{\tilde\ell_{il}\tilde\ell_{jk}}{\tilde\ell_{lj}\tilde\ell_{ki}}\,.
\end{equation*}

\subsection{Length-cross-ratios}
\label{sec:length-cross-ratio}

The simple reasoning of Section~\ref{sec:two_triv_examples} extends to
the general case: Let~$\T$ be any triangulation, and let $\ell$ and
$\tilde\ell$ be two discrete metrics on $\T$. For each triangle $ijk$
of~$\T$, considered separately, equations~\eqref{eq:tilde_ell}
determine unique values for $u$. Thus, for each vertex $i\in V$, one
obtains one value for $u_{i}$ per adjacent triangle. These values are
in general different. They agree for each vertex if and only if the
discrete metrics $\ell$ and $\tilde\ell$ are discretely conformally
equivalent. Since the vertex links are connected, it suffices to
consider values obtained from adjacent triangles. This leads to
Proposition~\ref{prop:conf_equiv_in_terms_of_lcr} below, where the
condition for discrete conformal equivalence is given in terms of the
so-called length-cross-ratios:

\begin{definition}
  For each interior edge $ij$ between triangles $ijk$ and $ilj$ as in
  Figure~\ref{fig:lcr}, define the \emph{length-cross-ratio} induced
  by $\ell$ to be
  \begin{equation}
    \label{eq:lcr}
    \lcr_{ij}=\frac{\ell_{il}\ell_{jk}}{\ell_{lj}\ell_{ki}}.
  \end{equation}
\end{definition}

This definition implicitly assumes that an orientation of the
triangulated surface has been chosen. The other choice of orientation
leads to reciprocal values for the length-cross-ratios. (For
non-orientable surfaces, the length-cross-ratios are well defined on
the interior edges of the oriented double cover.)

If the quadrilateral $iljk$ is embedded in $\C$, then the
length-cross-ratio $\lcr_{ij}$ is just the absolute value of the
complex cross ratio of the vertex positions $z_{i}, z_{l}, z_{j},
z_{k}$,
\begin{equation*}
  \ccr(z_{1},z_{2},z_{3},z_{4})=
  \frac{(z_{1}-z_{2})(z_{3}-z_{4})}{(z_{2}-z_{3})(z_{4}-z_{1})}\,.
\end{equation*}

Discretely conformally equivalent metrics $\ell$, $\tilde\ell$ induce
the same length-cross-ratios, because the scale factors $e^{u/2}$
cancel. By the reasoning above, the converse is also true.

\begin{proposition}
  \label{prop:conf_equiv_in_terms_of_lcr}
  Two euclidean triangulations $(\T,\ell)$ and $(\T,\tilde\ell)$ are
  discretely conformally equivalent if and only if for each interior
  edge $ij\in E_{\T}$, the induced length-cross-ratios are
  equal: $\lcr_{ij}=\widetilde{\lcr}_{ij}$.
\end{proposition}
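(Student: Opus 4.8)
The plan is to prove the two implications separately, with the \emph{only if} direction being essentially immediate and the \emph{if} direction requiring a gluing argument that promotes local consistency to a globally defined scale function.

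For the \emph{only if} direction, suppose $\tilde\ell_{ij}=e^{\frac{1}{2}(u_i+u_j)}\ell_{ij}$ for some $u:V\to\R$. Substituting this into the definition~\eqref{eq:lcr} of the length-cross-ratio on an interior edge $ij$, the four scale factors attached to $i,j,k,l$ each contribute once to the numerator and once to the denominator, so they cancel and yield $\widetilde{\lcr}_{ij}=\lcr_{ij}$. This is already the observation recorded in the text preceding the statement.

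For the \emph{if} direction I would first record that, for a single triangle $ijk$, the three equations~\eqref{eq:tilde_ell} have the unique solution computed in Section~\ref{sec:two_triv_examples}, so that $\ell$ and $\tilde\ell$ determine a well-defined per-triangle value $u_i^{ijk}=\log\bigl(\tilde\ell_{ij}\tilde\ell_{ki}\ell_{jk}/(\ell_{ij}\ell_{ki}\tilde\ell_{jk})\bigr)$ at each corner. The task is then to show that these per-triangle values glue into a single function on $V$, which amounts to checking that the values assigned to a fixed vertex by all of its incident triangles coincide. The key local step is to show that, for two triangles $ijk$ and $ilj$ sharing an interior edge $ij$, the equality $\lcr_{ij}=\widetilde{\lcr}_{ij}$ is equivalent to $u_i^{ijk}=u_i^{ilj}$. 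I would verify this by forming the quotient $e^{u_i^{ijk}}/e^{u_i^{ilj}}$ from the explicit corner formulas: the factor $\tilde\ell_{ij}/\ell_{ij}$ coming from the shared edge cancels, and the remaining expression is exactly $\lcr_{ij}/\widetilde{\lcr}_{ij}$. A small but important observation is that, since the cross-ratio is symmetric under interchanging the roles of $i$ and $j$, this single scalar equation simultaneously forces $u_j^{ijk}=u_j^{ilj}$ as well; thus one cross-ratio per edge controls the agreement at \emph{both} of its endpoints.

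Finally I would glue. Because the triangulation is a simplicial complex and the link of each interior vertex is a connected chain (or cycle) of triangles, any two triangles incident to a vertex $i$ are joined by a sequence in which consecutive members share an edge at $i$; applying the local step along this sequence and using transitivity of equality, all incident triangles assign the same value $u_i$, and boundary vertices are handled identically along their open links. This produces a well-defined $u:V\to\R$ for which each edge equation~\eqref{eq:tilde_ell} holds by construction, establishing conformal equivalence. The one point that needs care is precisely this passage from pairwise agreement across shared edges to global agreement around each vertex: the argument is clean only because equality propagates transitively along the connected link, so that the redundant closing edge of a vertex cycle imposes no monodromy obstruction.
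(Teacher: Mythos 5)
Your proposal is correct and follows essentially the same route as the paper: the paper's proof is exactly the cancellation argument for the \emph{only if} direction, plus the observation that the per-triangle values of $u$ (computed as in Section~\ref{sec:two_triv_examples}) agree across a shared interior edge precisely when the two length-cross-ratios coincide, with connectedness of the vertex links supplying the gluing. Your explicit computation of the quotient $e^{u_i^{ijk}}/e^{u_i^{ilj}}=\lcr_{ij}/\widetilde{\lcr}_{ij}$ and the remark that one scalar equation per edge controls both endpoints simply spell out details the paper leaves implicit.
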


\subsection{The product of length-cross-ratios around a vertex}
\label{sec:product_of_length_cross_ratios_around_a_vertex}

Let us denote the sets of interior edges and interior vertices by
$\Eint$ and $\Vint$, respectively. Which functions
$\Eint\rightarrow\R_{>0}$ can arise as length-cross-ratios? A
necessary condition is that the product of length-cross-ratios on the
edges around an interior vertex is~$1$, because all lengths $\ell$
cancel.
\begin{equation}
  \label{eq:lcr_product}
  \text{For all }i\in\Vint:\quad
  \prod_{j:ij\in E}\lcr_{ij} = 1\,. 
\end{equation}
If we ignore the triangle inequalities, this condition is also
sufficient:
\begin{proposition}
  \label{prop:ell_from_lcr}
  Let $\lcr:\Eint\rightarrow\R_{>0}$ be any positive function on the
  set of interior edges. There exists a positive function
  $\ell:E\rightarrow\R_{>0}$ on the set of edges
  satisfying~\eqref{eq:lcr} for every interior edge $ij$, if and only
  if condition~\eqref{eq:lcr_product} holds.
\end{proposition}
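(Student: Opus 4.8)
The plan is to take logarithms, turning the multiplicative conditions \eqref{eq:lcr} into a linear system. Write $\mu_{ij}=\log\ell_{ij}$ and $c_{ij}=\log\lcr_{ij}$, and introduce the linear map $\partial\colon\R^{E}\to\R^{\Eint}$ sending edge data to induced logarithmic cross-ratios, $(\partial\mu)_{ij}=\mu_{il}+\mu_{jk}-\mu_{lj}-\mu_{ki}$ for an interior edge $ij$ with adjacent triangles $ijk$ and $ilj$. In these terms \eqref{eq:lcr_product} says exactly that $c$ lies in $W=\{c\in\R^{\Eint}:\sum_{j:\,ij\in E}c_{ij}=0\text{ for all }i\in\Vint\}$, and the proposition is the assertion $\operatorname{Im}\partial=W$. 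I would prove this by a dimension count: the inclusion $\operatorname{Im}\partial\subseteq W$ is the cancellation of all lengths around an interior vertex already noted before \eqref{eq:lcr_product}, so it suffices to show $\dim\operatorname{Im}\partial=\dim W$.

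To compute the kernel, I would introduce the companion map $\delta\colon\R^{V}\to\R^{E}$, $(\delta v)_{ij}=v_{i}+v_{j}$, recording a conformal change of scale as in \eqref{eq:tilde_lambda}. Direct substitution gives $\operatorname{Im}\delta\subseteq\ker\partial$, and the converse follows as in Section~\ref{sec:length-cross-ratio}: solving the three equations of Section~\ref{sec:two_triv_examples} on each triangle yields vertex values whose agreement across shared edges is forced by $\partial\mu=0$, so every element of $\ker\partial$ has the form $\delta v$; hence $\ker\partial=\operatorname{Im}\delta$. Since every triangulation contains triangles, its $1$-skeleton carries odd cycles, on which $v_{i}+v_{j}=0$ forces $v\equiv0$; thus $\delta$ is injective and $\dim\ker\partial=|V|$. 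Therefore $\dim\operatorname{Im}\partial=|E|-|V|$, which equals $|\Eint|-|\Vint|$ because the boundary is a disjoint union of triangulated circles, so $|\Ebdy|=|\Vbdy|$.

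It then remains to show $\dim W=|\Eint|-|\Vint|$, that is, that the $|\Vint|$ functionals $L_{i}(c)=\sum_{j:\,ij\in E}c_{ij}$ are independent, and I expect this to be the one genuinely delicate point, precisely because of boundary vertices. A dependence $\sum_{i\in\Vint}a_{i}L_{i}=0$ unwinds to the requirement $\hat a_{i}+\hat a_{j}=0$ for every interior edge $ij$, where $\hat a$ extends $a$ by zero on $\Vbdy$ and where I use that every edge at an interior vertex is automatically interior. The key observation is that each interior vertex $i$ either has a neighbour in $\Vbdy$, whose joining edge is interior and forces $\hat a_{i}=-\hat a_{j}=0$, or else has all neighbours interior, in which case any triangle at $i$ has all three edges interior and the odd-cycle relation again forces $\hat a_{i}=0$. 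Hence $\hat a\equiv0$, the functionals are independent, and $\dim W=|\Eint|-|\Vint|$. Combining the two counts gives $\dim\operatorname{Im}\partial=\dim W$, and with $\operatorname{Im}\partial\subseteq W$ this yields $\operatorname{Im}\partial=W$; exponentiating any preimage $\mu$ of $c=\log\lcr$ then produces the desired positive $\ell$. (Alternatively one could proceed constructively, fixing the lengths of one triangle and propagating them across a spanning tree of the dual graph, the product condition being exactly what makes the assignment consistent around the fundamental cycles, but the linear-algebraic count seems cleaner.)
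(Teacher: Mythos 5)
Your argument is correct, but it takes a genuinely different route from the paper. The paper's proof is \emph{constructive}: it introduces the auxiliary angle parameters $c^{i}_{jk}=\ell_{jk}/(\ell_{ij}\ell_{ki})$, observes that $\lcr_{ij}=c^{i}_{jk}/c^{i}_{lj}$ couples only consecutive angles around a vertex, chooses one free $c$-value per vertex, propagates around each vertex link (the product condition~\eqref{eq:lcr_product} being exactly the consistency requirement around interior vertices), and finally recovers $\ell_{ij}=(c^{i}_{jk}c^{j}_{ki})^{-1/2}$. You instead prove the linearized statement $\operatorname{Im}\partial=W$ by a dimension count, and each step checks out: $\ker\partial=\operatorname{Im}\delta$ is indeed the content of Sections~\ref{sec:two_triv_examples} and~\ref{sec:length-cross-ratio} (and holds without triangle inequalities, since only the solvability of the three linear equations per triangle and the connectivity of vertex links are used); the odd-cycle argument gives injectivity of $\delta$; $|\Ebdy|=|\Vbdy|$ holds because the boundary is a union of circles; and your verification that the functionals $L_{i}$ are independent correctly isolates and handles the only delicate point, using that every edge at an interior vertex is interior. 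What the paper's approach buys is an explicit algorithm for producing a representative $\ell$ from given length-cross-ratios, which is invoked later (e.g.\ in Section~\ref{sec:mapping_problems}); what yours buys is a cleaner conceptual picture, identifying the proposition as the exactness statement $\operatorname{Im}\partial=\ker L$ for the sequence $\R^{V}\to\R^{E}\to\R^{\Eint}\to\R^{\Vint}$, which also recovers the dimension count of Section~\ref{sec:spaces_and_dimensions} for free. Your parenthetical constructive alternative (propagation over a spanning tree of the dual graph) is closer in spirit to the paper's proof of Proposition~\ref{prop:second_variational_1} than to its proof of this proposition.
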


\begin{proof}
  It remains to show that if $\lcr\in(\R_{>0})^{\Eint}$ satisfies
  condition~\eqref{eq:lcr_product}, then the system of
  equations~\eqref{eq:lcr} has a solution. In fact, we will explicitly
  construct such a solution. To this end, we introduce auxiliary
  parameters $c$, which are defined on the set of angles $A$ of the
  triangulation: Given $\ell\in(\R_{>0})^{E}$, define
  $c\in(\R_{>0})^{A}$ by
  \begin{equation}
    \label{eq:aux_param}
    c^{i}_{jk}=\frac{\ell_{jk}}{\ell_{ij}\ell_{ki}}\,,
  \end{equation}
  see Figure~\ref{fig:aux_param}.
  \begin{figure}
    \centering
    \includegraphics{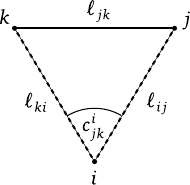}
    \caption{The parameters $c_{jk}^{i}$, defined on the set of
      triangle angles $A$.}
    \label{fig:aux_param}
  \end{figure}
  In terms of these parameters, the length-cross-ratios induced by
  $\ell$ are
  \begin{equation}
    \label{eq:lcr_aux_param}
    \lcr_{ij} = \frac{c^{i}_{jk}}{c^{i}_{lj}}\,,
  \end{equation}
  where $l$, $j$, $k$ occur in the link of $i$ in this cyclic order,
  as in Figure~\ref{fig:lcr}. (For a geometric interpretation of the
  parameters~$c_{jk}^{i}$ in terms of hyperbolic geometry, see
  Section~\ref{sec:decorated_ideal_triangs_and_tets}.)
  
  Now suppose $\lcr\in(\R_{>0})^{\Eint}$ satisfies
  condition~\eqref{eq:lcr_product}. Then it is easy to find a solution
  $c\in(\R_{>0})^{A}$ of equations~\eqref{eq:lcr_aux_param}, because
  each equation involves only two values of $c$ on consecutive angles
  at the same vertex. So one can freely choose one $c$-value per
  vertex and successively calculate the values on neighboring angles
  around the same vertex by multiplying (or dividing) with the values
  of $\lcr$ on the edges in between.

  Next, solve equations~\eqref{eq:aux_param} for $\ell$, where $c$ is
  the solution to equations~\eqref{eq:lcr_aux_param} just
  constructed. This is also easy: The length of an edge $ij$ is
  determined by the values of $c$ on the two adjacent angles on either
  side,
  \begin{equation*}
    \ell_{ij}=(c^{i}_{jk}c^{j}_{ki})^{-\frac{1}{2}}.
  \end{equation*}
  (Check that the two $c$-values on the other side give the same
  value.)  Thus we have constructed a function $\ell\in(\R_{>0})^{E}$
  satisfying equations~\eqref{eq:lcr} for the given function
  $\lcr\in(\R_{>0})^{\Eint}$.
\end{proof}

\subsection{M\"obius invariance of discrete conformal structures}
\label{sec:Moebius}

The group of M\"obius transformations of
$\widehat{\R^{n}}=\R^{n}\cup\{\infty\}$ is the group generated by
inversions in spheres. (Planes are considered spheres through
$\infty$.) The group of M\"obius transformations is also generated by
the similarity transformations (which fix $\infty$), and inversion in
the unit sphere. M\"obius transformations are conformal, and a famous
theorem of Liouville says that for $n>2$, any conformal map of a domain
$U\subset\R^{n}$ is the restriction of a M\"obius transformation.

Let $\T$ be a triangulation and let $\|\cdot\|$ denote the euclidean
norm on $\R^{n}$, $n\geq 2$. Suppose $v:V_{\T}\rightarrow\R^{n}$ maps
the vertices of each triangle to three affinely independent
points. Then $v$ induces a discrete metric
$\ell_{ij}=\|v_{i}-v_{j}\|$. Two maps $v,\tilde
v:V\rightarrow\R^{n}\subset\widehat{\R^{n}}$ are \emph{related by a
  M\"obius transformation} if there is a M\"obius transformation $T$
such that $\tilde v = T\circ v$.

\begin{proposition}
  If two maps $V_{\T}\rightarrow\R^{n}$ are related by a
  M\"obius transformation, then the induced discrete metrics are
  discretely conformally equivalent.
\end{proposition}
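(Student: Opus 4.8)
The plan is to exploit two facts from the preceding discussion: that discrete conformal equivalence is an equivalence relation (Definition~\ref{def:d_conf_equiv}), hence transitive, and that every M\"obius transformation is a composition of similarities and sphere inversions. Since a composition of scale-factor relations~\eqref{eq:tilde_ell} again has the same form --- the logarithmic scale factors simply add, by~\eqref{eq:tilde_lambda} --- it suffices to verify the claim separately for the two types of generators and then chain the resulting conformal equivalences. I would therefore reduce to showing that a similarity and a single inversion each carry the induced metric to a conformally equivalent one.

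For a similarity $x \mapsto \rho Ax + b$ with $A$ orthogonal and $\rho > 0$, one has $\tilde\ell_{ij} = \rho\,\ell_{ij}$, so the scale factor is the constant $u \equiv 2\log\rho$. The substantive case is inversion. Writing $\iota_{p,r}(x) = p + r^2 (x-p)/\|x-p\|^2$ for inversion in the sphere of radius $r$ about $p$, the key computation is the distance-transformation formula
\[
  \|\iota_{p,r}(x) - \iota_{p,r}(y)\|
  = r^2\,\frac{\|x-y\|}{\|x-p\|\,\|y-p\|},
\]
which follows by expanding $\|\iota_{p,r}(x)-\iota_{p,r}(y)\|^2$ and using $\|x-y\|^2 = \|x-p\|^2 - 2\langle x-p,\,y-p\rangle + \|y-p\|^2$. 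The decisive feature is that the right-hand side splits as $\|x-y\|$ times a factor depending on $x$ alone and a factor depending on $y$ alone. Hence $\iota_{p,r}$ sends $\ell_{ij}$ to $\tilde\ell_{ij} = e^{(u_i+u_j)/2}\ell_{ij}$ with $u_i = 2\log r - 2\log\|v_i - p\|$, \emph{provided} no vertex image $v_i$ coincides with the center $p$.

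The one point that needs care --- and the main obstacle --- is the interaction with the point at infinity: a careless decomposition of $T$ into generators might send some vertex to $\infty$ at an intermediate stage, where no metric is induced, or place a vertex at the center of an inversion, where the formula above breaks down. I would avoid this by using the economical decomposition available for any $T$ that does not already fix $\infty$ (if it does, $T$ is a similarity and we are done): set $p := T^{-1}(\infty)$ and $S := T \circ \iota_{p,1}$. Since $\iota_{p,1}(\infty) = p$ and $T(p) = \infty$, the map $S$ fixes $\infty$ and is therefore a similarity, giving $T = S \circ \iota_{p,1}$. Because each $\tilde v_i = T(v_i)$ is a finite point of $\R^n$, the center $p = T^{-1}(\infty)$ is automatically distinct from every $v_i$; thus $\iota_{p,1}$ carries the $v_i$ to finite, pairwise distinct points and induces a conformally equivalent metric, and the subsequent similarity $S$ multiplies all lengths by a constant. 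Composing these two conformal equivalences by transitivity yields the proposition.
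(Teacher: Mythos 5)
Your proof is correct and follows essentially the same route as the paper's: reduce to the generators of the M\"obius group and apply the distance-transformation identity for inversion, which makes the multiplicative vertex-splitting of the scale factors manifest (the paper states the identity only for inversion in the unit sphere about the origin, which suffices after conjugating by similarities). Your additional care about intermediate stages sending a vertex to $\infty$ or to the center of inversion --- resolved by the decomposition $T=S\circ\iota_{p,1}$ with $p=T^{-1}(\infty)$ --- addresses a point that the paper's two-line proof leaves entirely implicit.
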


\begin{proof}
  The claim is obvious if the relating M\"obius transformation is a
  similarity transformation. For inversion in the unit sphere,
  $x\mapsto\frac{1}{\|x\|^{2}}\,x$, it follows from the identity
  \begin{equation*}
    \Big\|
    \frac{1}{\|p\|^{2}}\,p - \frac{1}{\|q\|^{2}}\,q
    \Big\|
    =\frac{1}{\|p\|\,\|q\|}\,\|p-q\|\,.
  \end{equation*}
\end{proof}

\begin{remark}
  For $n=2$ there is an obvious alternative argument involving the
  complex cross ratio. One can extend this argument to $n>2$. The only
  difficulty is to define a complex cross-ratio for four points in
  $\R^{n}$ if $n>2$, such that it is invariant under M\"obius
  transformations. Such a cross-ratio can be defined up to complex
  conjugation by identifying a 2-sphere through the four points
  conformally with the extended complex plane $\hat\C$. This involves
  several choices: a choice of 2-sphere if the four points are
  cocircular, a choice of orientation of the $2$-sphere, and choice of
  orientation preserving conformal map to $\hat\C$. Only the choice of
  orientation makes a difference, the two choices leading to conjugate
  values for the cross-ratio. The length-cross-ratio is the absolute
  value of this complex cross-ratio, so the ambiguity with respect to
  complex conjugation does not matter.
\end{remark}

\subsection{Discrete conformal maps}
\label{sec:discrete_conformal_maps}

\begin{figure}
  \centering
  \includegraphics[height=0.36\textwidth]{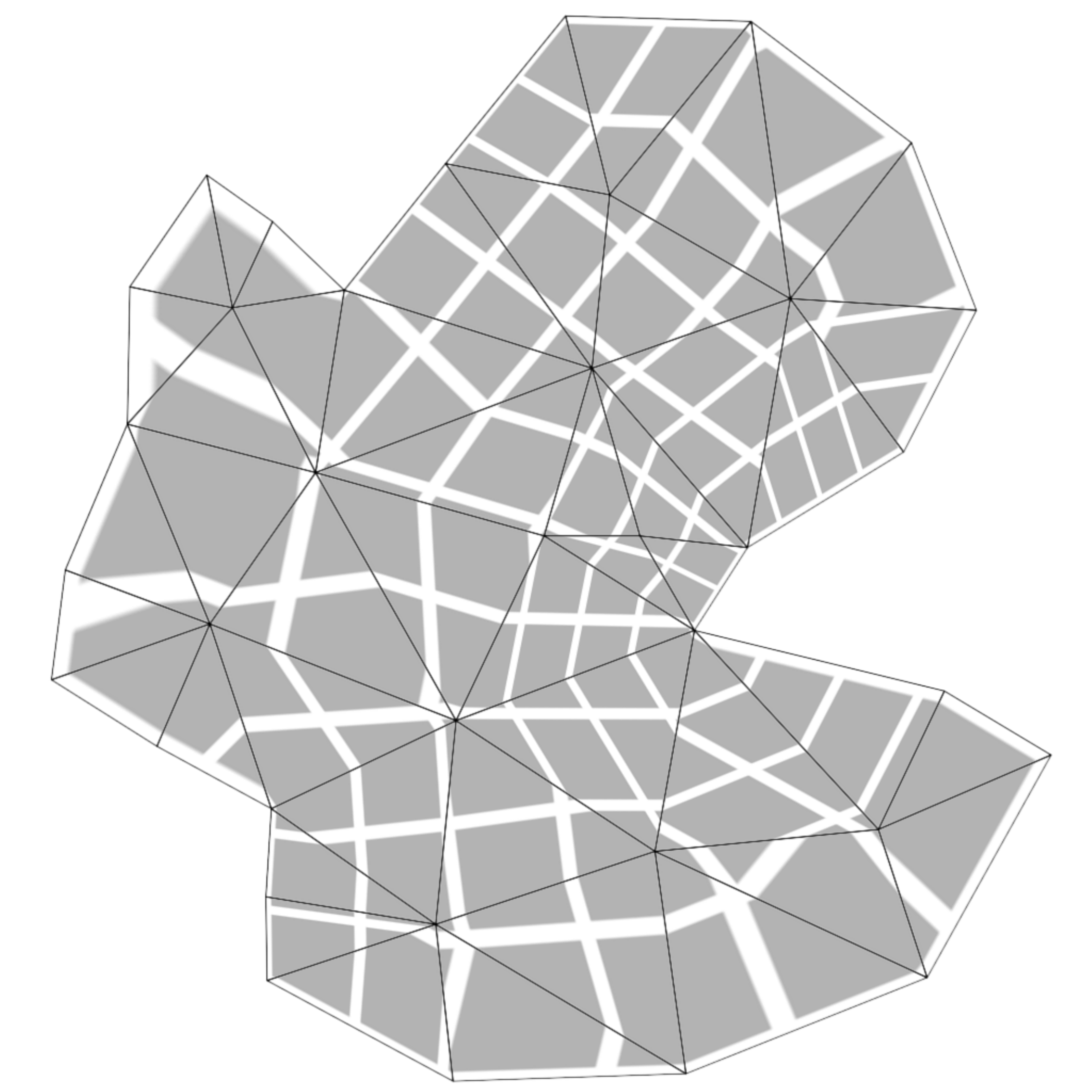}%
  \includegraphics[height=0.36\textwidth]{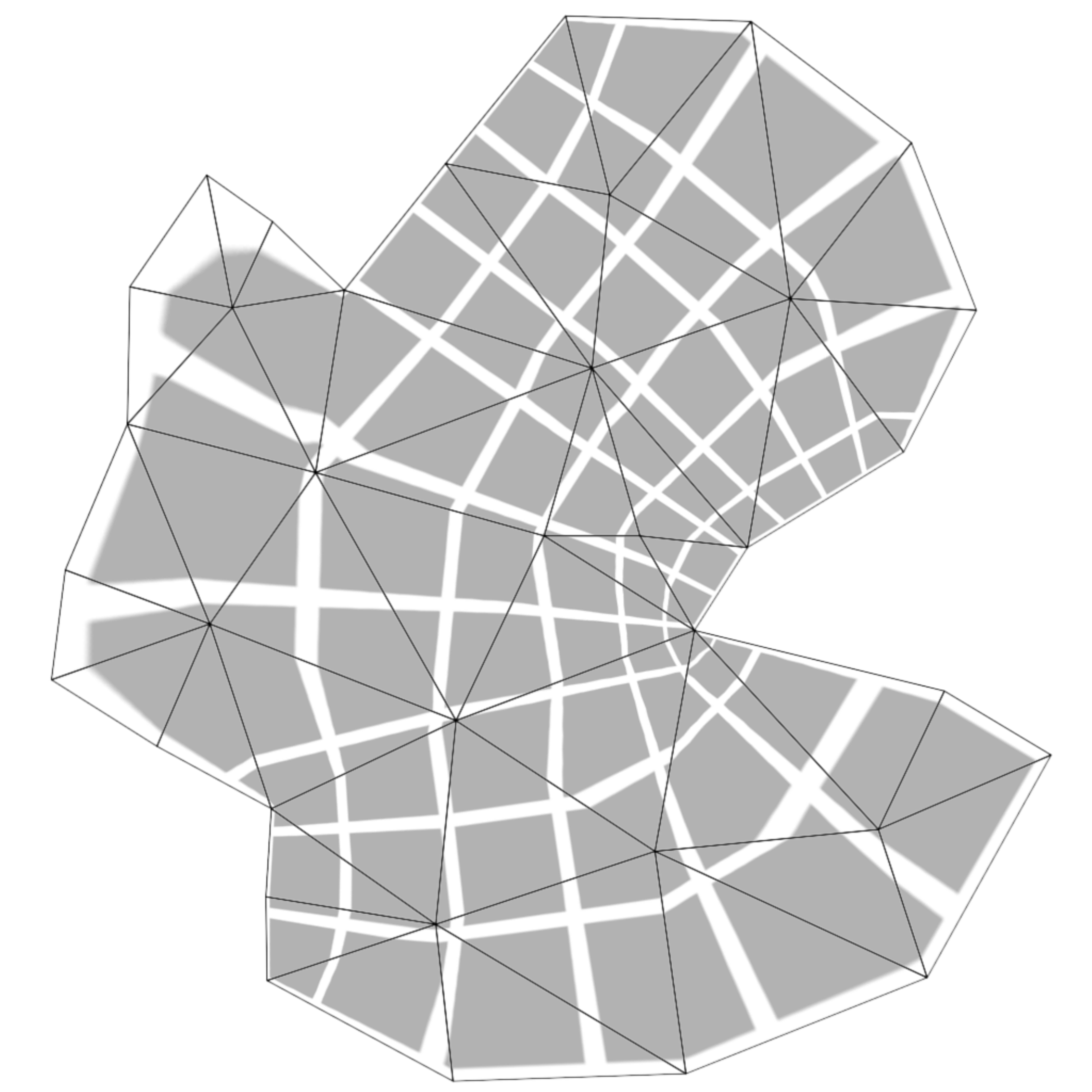}\quad
  \includegraphics[height=0.36\textwidth]{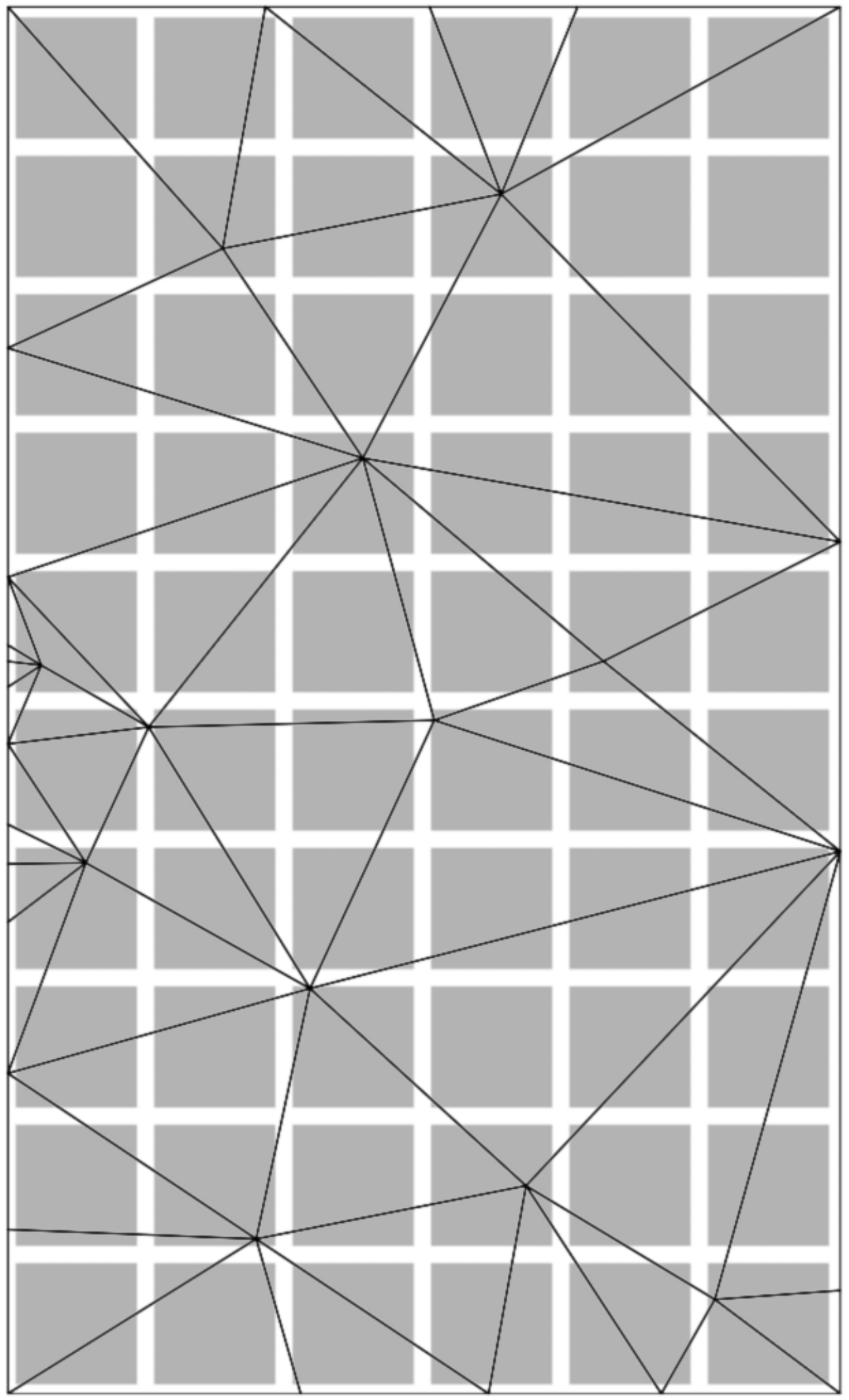}%
  \caption{A coarsely triangulated domain in the plane \emph{(middle)}
    is mapped to a rectangle \emph{(right)} by a discrete conformal
    map (see Definition~\ref{def:dconfmap}). Instead of using
    circumcircle preserving piecewise projective interpolation, one
    can also interpolate linearly in each triangle. The result
    \emph{(left)} looks noticeably ``less smooth''.}
  \label{fig:pl_and_pp}
\end{figure}

So far we have only talked about discrete conformal equivalence. This
section deals with the matching notion of discrete conformal maps (see
Figure~\ref{fig:pl_and_pp}).

For any two euclidean triangles (with labeled vertices to indicate
which vertices should be mapped to which), there is a unique
projective map that maps one triangle onto the other and the
circumcircle of one onto the circumcircle of the other (see
Lemma~\ref{lem:mu_i} below). Let us call this map the
\emph{circumcircle preserving projective map} between the two
triangles.

\begin{definition}
  \label{def:dconfmap}
  A \emph{discrete conformal map} from one euclidean triangulation
  $(\T, \ell)$ to a combinatorially equivalent euclidean triangulation
  $(\T, \tilde\ell)$ is a homeomorphism whose restriction to every
  triangle is the circumcircle preserving projective map
  onto the corresponding image triangle. 
\end{definition}

Consider two combinatorially equivalent euclidean triangulations,
$(\T, \ell)$ and $(\T, \tilde\ell)$. For each individual triangle of
$(\T, \ell)$, there is a circumcircle preserving projective map to the
corresponding triangle of $(\T, \tilde\ell)$. But these maps do in
general not fit together continuously across edges. However, they do
fit together, forming a discrete conformal map, precisely if the
euclidean triangulations are discretely conformally equivalent:

\begin{theorem}
  \label{thm:dconfmap}
  The following two statements are equivalent:
  \begin{compactenum}[(i)]
  \item $(\T, \ell)$ and $(\T, \tilde\ell)$ are discretely conformally
    equivalent.
  \item There exists a discrete conformal map 
    $(\T, \ell)\rightarrow(\T, \tilde\ell)$.
  \end{compactenum}
\end{theorem}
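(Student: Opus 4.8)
The plan is to localize statement~(ii) to the individual interior edges and to reduce the gluing condition to the agreement of a single scale factor at each shared vertex. Since the restriction of a discrete conformal map to each triangle is prescribed---it is the circumcircle preserving projective map, whose existence and uniqueness is the content of Lemma~\ref{lem:mu_i}---the only substance of~(ii) is whether these per-triangle maps fit together continuously. They automatically agree at the vertices and on the triangle interiors, so the pieced-together map is well defined if and only if, for every interior edge $ij$ shared by triangles $ijk$ and $ilj$, the two projective maps $\Phi_{ijk}$ and $\Phi_{ilj}$ induce the same map of the segment $ij$. I would therefore first show that both~(i) and~(ii) are equivalent to this family of per-edge matching conditions, and then match the per-edge condition to the length-cross-ratio condition of Proposition~\ref{prop:conf_equiv_in_terms_of_lcr}.

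First I would analyze the restriction of $\Phi_{ijk}$ to a single edge. Parametrizing the line through $i$ and $j$ by euclidean arc length, with $i$ at $s=0$ and $j$ at $s=\ell_{ij}$, and likewise the image line with $\tilde i$ at $0$ and $\tilde j$ at $\tilde\ell_{ij}$, the restriction $\Phi_{ijk}|_{ij}$ is a fractional-linear map of the parameter sending $0\mapsto 0$ and $\ell_{ij}\mapsto\tilde\ell_{ij}$. Such a map has a single remaining degree of freedom, which I would pin down by its derivative $\mu_i$ at $s=0$; a short computation then forces the derivative at the other endpoint to be $\tilde\ell_{ij}^{\,2}/(\mu_i\,\ell_{ij}^{2})$, so that the product of the two endpoint derivatives equals $(\tilde\ell_{ij}/\ell_{ij})^2$ irrespective of the third vertex. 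Consequently the edge map is determined by $\tilde i$, $\tilde j$ and the single number $\mu_i$, and the two adjacent triangles induce the same map on $ij$ exactly when their values of $\mu_i$ coincide.

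The heart of the argument, and the step I expect to be the main obstacle, is to identify this endpoint derivative with the single-triangle scale factor: $\mu_i=e^{u_i}$, where $u$ is the unique solution of equations~\eqref{eq:tilde_ell} for the triangle $ijk$ taken in isolation, namely $e^{u_i}=\tilde\ell_{ij}\ell_{jk}\tilde\ell_{ki}/(\ell_{ij}\tilde\ell_{jk}\ell_{ki})$ as in Section~\ref{sec:two_triv_examples}. This is precisely the information I expect Lemma~\ref{lem:mu_i} to supply about the circumcircle preserving projective map, the geometric content being that this map scales lengths along the edges emanating from $i$ by the factor $e^{u_i}$. Granting this identification, the matching condition $\mu_i^{(ijk)}=\mu_i^{(ilj)}$ becomes $e^{u_i^{(ijk)}}=e^{u_i^{(ilj)}}$; substituting the two explicit formulas and cancelling the common factors $\ell_{ij}$ and $\tilde\ell_{ij}$ collapses it exactly to $\lcr_{ij}=\widetilde{\lcr}_{ij}$, the length-cross-ratio condition.

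It then remains to assemble the two implications. For (i)$\Rightarrow$(ii), a global $u$ satisfying~\eqref{eq:tilde_ell} restricts on each triangle to that triangle's unique single-triangle solution, so all the $\mu_i$ agree across every edge and the per-triangle maps glue to a homeomorphism, i.e. a discrete conformal map. For (ii)$\Rightarrow$(i), continuity across each interior edge forces $\mu_i^{(ijk)}=\mu_i^{(ilj)}$, hence $u_i^{(ijk)}=u_i^{(ilj)}$ for triangles sharing that edge; since the link of each vertex is connected, the single-triangle values agree for all triangles containing $i$ and thus define a global function $u\colon V\to\R$ realizing~\eqref{eq:tilde_ell}, which is discrete conformal equivalence. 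Equivalently, both implications may be routed through Proposition~\ref{prop:conf_equiv_in_terms_of_lcr} using the length-cross-ratio form of the matching condition. Boundary edges carry only one triangle, require no matching, and cause no difficulty.
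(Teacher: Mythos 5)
Your overall strategy coincides with the paper's: reduce the gluing of the per-triangle circumcircle preserving projective maps to a matching condition along each interior edge, extract that condition from Lemma~\ref{lem:mu_i}, and identify it with the agreement of the single-triangle solutions $u_i$ (equivalently, with $\lcr_{ij}=\widetilde{\lcr}_{ij}$ via Proposition~\ref{prop:conf_equiv_in_terms_of_lcr}). The reduction to interior edges, the observation that a fractional-linear map of the edge sending $0\mapsto 0$ and $\ell_{ij}\mapsto\tilde\ell_{ij}$ is pinned down by one endpoint derivative, and the product formula $(\tilde\ell_{ij}/\ell_{ij})^2$ for the two endpoint derivatives are all correct.

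The step you yourself call the heart of the argument is, however, false as stated: the arc-length derivative of the circumcircle preserving map at the endpoint $i$ of edge $ij$ is not $e^{u_i}$, and Lemma~\ref{lem:mu_i} does not assert that the map ``scales lengths along the edges emanating from $i$ by $e^{u_i}$.'' What the lemma supplies are the homogeneous weights $\mu_i=e^{-u_i}$ (taking $\mu=1$); writing a point of the edge as $(1-t)v_i+tv_j$, its image lies at parameter $\tilde t=t\mu_j/\big((1-t)\mu_i+t\mu_j\big)$, whence
\begin{equation*}
  \frac{d\tilde s}{ds}\Big|_{i}
  =\frac{\tilde\ell_{ij}}{\ell_{ij}}\cdot\frac{\mu_j}{\mu_i}
  =e^{(3u_i-u_j)/2},
\end{equation*}
which depends on $u_j$ as well (the map is projective, not conformal, at the vertices; e.g.\ for $u_i=\log 4$, $u_j=0$ the derivative is $8$, not $4$). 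So the algebra in which you substitute $e^{u_i}$ for this derivative and cancel down to the length-cross-ratio condition is not justified as written. The gap is reparable: both triangles adjacent to $ij$ satisfy $e^{(u_i+u_j)/2}=\tilde\ell_{ij}/\ell_{ij}$, so the true matching condition $3u_i^{(ijk)}-u_j^{(ijk)}=3u_i^{(ilj)}-u_j^{(ilj)}$, combined with $u_i^{(ijk)}+u_j^{(ijk)}=u_i^{(ilj)}+u_j^{(ilj)}$, still forces $u_i^{(ijk)}=u_i^{(ilj)}$ and $u_j^{(ijk)}=u_j^{(ilj)}$, after which your concluding paragraph goes through verbatim. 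The paper sidesteps this computation by phrasing the matching condition directly in terms of the weights: the two edge maps agree if and only if the pairs $(\mu_i,\mu_j)$ from the two triangles are proportional, and since $\mu_i\mu_j=(\ell_{ij}/\tilde\ell_{ij})^2$ holds in both, proportionality is equivalent to equality, hence to agreement of the $u$'s.
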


The rest of this section is concerned with the proof of
Theorem~\ref{thm:dconfmap}. It follows easily from
Lemma~\ref{lem:mu_i} below, which provides an analytic description of
the circumcircle preserving projective map between two individual
triangles.

Consider two triangles $\Delta$ and $\tilde\Delta$ in the euclidean
plane, and let $(x_{i},y_{i})$ and $(\tilde x_{i},\tilde y_{i})$,
$i\in\{1,2,3\}$, be the coordinates of their vertices in a Cartesian
coordinate system. Let $\ell_{ij}$ and $\tilde \ell_{ij}$ be
the side lengths,
\begin{equation*}
  \ell_{ij}^{2}=(x_{i}-x_{j})^{2}+(y_{i}-y_{j})^{2},
\end{equation*}
and similarly for $\tilde \ell_{ij}$. Consider the euclidean plane as
embedded in the projective plane $\RP^{2}$ and let
$v_{i}=(x_{i},y_{i},1)$ and $\tilde v_{i}=(\tilde x_{i},\tilde
y_{i},1)$ be the homogeneous coordinate vectors of the vertices,
normalized so that the last coordinate is $1$. Then the projective
maps $f:\RP^{2}\rightarrow\RP^{2}$ that map $\Delta$ to $\tilde\Delta$
correspond via $f([v])=[F(v)]$ to the linear maps
$F:\R^{3}\rightarrow\R^{3}$ of homogeneous coordinates that satisfy
\begin{equation}
  \label{eq:F_of_v_i}
  F(v_{i})=\mu_{i}\tilde v_{i}
\end{equation}
for some ``weights'' $\mu_{i}\in\R\setminus\{0\}$.

\begin{lemma}
  \label{lem:mu_i}
  The projective map $f:[v]\mapsto[F(v)]$ maps the
  circumcircle of $\Delta$ to the circumcircle of $\tilde\Delta$ if and
  only if
  \begin{equation}
    \label{eq:mu_i}
    (\mu_{1},\mu_{2},\mu_{3})=\mu\,(e^{-u_{1}},e^{-u_{2}},e^{-u_{3}})
  \end{equation}
  where $u_{1}, u_{2}, u_{3}$ are the logarithmic scale factors
  satisfying the three equations~\eqref{eq:tilde_ell} for a single
  triangle and $\mu\in\R\setminus\{0\}$ is an arbitrary factor.
\end{lemma}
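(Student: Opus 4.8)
The plan is to represent the two circumcircles by symmetric $3\times 3$ matrices and to translate the circumcircle-preserving condition into a single matrix proportionality that can be read off entry by entry. Write a conic in $\RP^2$ as the zero locus $\{[v] : v^{\top}Cv = 0\}$ of a symmetric matrix $C$; a circle is exactly the case in which the upper-left $2\times 2$ block of $C$ is a positive multiple of the identity, so I normalize the circumcircle $C$ of $\Delta$ and the circumcircle $\tilde C$ of $\tilde\Delta$ to have upper-left block equal to $I_2$. Since $f\colon[v]\mapsto[F(v)]$ sends the locus of $C$ to the locus of $F^{-\top}CF^{-1}$, the map $f$ carries the circumcircle of $\Delta$ onto that of $\tilde\Delta$ precisely when $F^{\top}\tilde CF \propto C$ (equivalently $F^{-\top}CF^{-1}\propto\tilde C$). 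This is the single relation I must analyze.

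The crux is the following bilinear identity, which I would prove by polarization: if $v_i,v_j$ lie on the normalized circle $C$, then $v_i^{\top}Cv_j = -\tfrac12\ell_{ij}^2$, while $v_i^{\top}Cv_i = 0$. Indeed $v_i^{\top}Cv_j = \tfrac12\bigl(v_i^{\top}Cv_i + v_j^{\top}Cv_j - (v_i-v_j)^{\top}C(v_i-v_j)\bigr)$; the first two terms vanish because the vertices lie on the circle, and since $v_i-v_j=(x_i-x_j,\,y_i-y_j,\,0)$ has vanishing last coordinate, the quadratic form $C$ evaluates on it through its upper-left block $I_2$ alone, giving $(x_i-x_j)^2+(y_i-y_j)^2=\ell_{ij}^2$. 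The same holds for $\tilde C$ and $\tilde\ell$. This identity---that the off-diagonal entries of $V^{\top}CV$ are exactly $-\tfrac12\ell_{ij}^2$---is the real content of the lemma, and it is the step I would treat most carefully; everything else is bookkeeping.

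For the reduction, let $V=(v_1\,v_2\,v_3)$ and $\tilde V=(\tilde v_1\,\tilde v_2\,\tilde v_3)$ be the matrices of homogeneous vertex coordinates, which are invertible by affine independence, and set $D=\operatorname{diag}(\mu_1,\mu_2,\mu_3)$. The condition $F(v_i)=\mu_i\tilde v_i$ means $F=\tilde V D V^{-1}$, so $F^{\top}\tilde CF = V^{-\top}D\,(\tilde V^{\top}\tilde C\tilde V)\,DV^{-1}$ while $C = V^{-\top}(V^{\top}CV)V^{-1}$. Conjugating by $V^{-1}$, the proportionality $F^{\top}\tilde CF \propto C$ becomes $DMD \propto N$, where $M=\tilde V^{\top}\tilde C\tilde V$ and $N=V^{\top}CV$ are, by the bilinear identity, the symmetric matrices with zero diagonal and off-diagonal entries $-\tfrac12\tilde\ell_{ij}^2$ and $-\tfrac12\ell_{ij}^2$. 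Reading off the three off-diagonal entries, $DMD\propto N$ is equivalent to the three scalar equations $\mu_i\mu_j\,\tilde\ell_{ij}^2 = \lambda\,\ell_{ij}^2$ over the three pairs $\{i,j\}$, with a common factor $\lambda$.

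Finally I would solve these equations. Multiplying all three shows $\lambda>0$ and that $\mu_1,\mu_2,\mu_3$ share a common sign; setting $\mu=\pm\sqrt{\lambda}$ and $e^{-u_i}:=\mu_i/\mu$ turns them into $e^{-(u_i+u_j)} = \ell_{ij}^2/\tilde\ell_{ij}^2$, that is $\tilde\ell_{ij}=e^{\frac12(u_i+u_j)}\ell_{ij}$---exactly equations~\eqref{eq:tilde_ell} for the single triangle, which by Section~\ref{sec:two_triv_examples} have a unique solution $u$. Conversely, $\mu_i=\mu\,e^{-u_i}$ with these $u_i$ manifestly satisfies $\mu_i\mu_j\,\tilde\ell_{ij}^2=\mu^2\ell_{ij}^2$, so $DMD\propto N$ holds. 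This yields precisely the asserted form~\eqref{eq:mu_i} of the weights, and the reduction to the diagonal congruence $DMD\propto N$ together with its solution is then routine.
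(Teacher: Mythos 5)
Your proposal is correct and follows essentially the same route as the paper's proof: the paper also normalizes the circumcircle forms to $q(x,y,z)=x^{2}+y^{2}+\dots$ (your upper-left block $I_2$), reduces the proportionality of $q$ and $F^{*}\tilde q$ to the three off-diagonal conditions on the basis $v_1,v_2,v_3$, and uses the identity $q(v_i,v_j)=-\tfrac12\ell_{ij}^{2}$ obtained exactly by your polarization argument. Your matrix bookkeeping $DMD\propto N$ and the explicit sign discussion for $\lambda$ and the $\mu_i$ are just slightly more detailed renderings of the same steps.
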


\begin{proof}[Proof of Lemma~\ref{lem:mu_i}]
  The circumcircle of $\Delta$ is
  \begin{equation*}
    \big\{[v]\in\RP^{2}\;\big|\;q(v)=0\big\},
  \end{equation*}
  where $q$ is the quadratic form
  \begin{equation*}
    q(x,y,z)=x^{2}+y^{2}+2axz+2byz+cz^{2}
  \end{equation*}
  with $a,b,c\in\R$ uniquely determined by the condition that
  \begin{equation}
    \label{eq:q_condition}
    q(v_{1})=q(v_{2})=q(v_{3})=0.
  \end{equation}
  In the same fashion, let the quadratic form describing the
  circumcircle of $\tilde\Delta$ be
  \begin{equation*}
    \tilde q(x,y,z)=x^{2}+y^{2}+2\tilde axz+2\tilde byz+\tilde cz^{2}
  \end{equation*}
  so that 
  \begin{equation}
    \label{eq:tilde_q_condition}
    \tilde q(\tilde v_{1})=\tilde q(\tilde v_{2})=\tilde q(\tilde v_{3})=0.
  \end{equation}
  We will also denote by $q$ and $\tilde q$ the corresponding
  symmetric bilinear forms:
  \begin{equation*}
    q(v)=q(v,v)\quad\text{and}\quad\tilde q(v)=\tilde q(v,v).
  \end{equation*}

  The projective map $f$ maps circumcircle to circumcircle if and only
  if $q$ and the pull-back $F^{*}\tilde q$ are linearly
  dependent. That is, if and only if
  \begin{equation*}
    \mu^{2}\,q(v,w) = \tilde q(F(v), F(w))
  \end{equation*}
  for all $v,w\in\R^{3}$ for some $\mu\in\R$. Since
  $v_{1},v_{2},v_{3}$ is a basis of $\R^{3}$ and because of
  equations~\eqref{eq:q_condition} and~\eqref{eq:tilde_q_condition},
  this is the case if and only if
  \begin{equation}
    \label{eq:q_mu_condition}
    \mu^{2}\,q(v_{i},v_{j})=
    \mu_{i}\mu_{j}\,\tilde q(\tilde v_{i}, \tilde v_{j})
  \end{equation}
  for $i,j\in\{1,2,3\}$, $i\not=j$.
  Now note that
  \begin{equation*}
    \ell_{ij}^{2} = q(v_{i}-v_{j}, v_{i}-v_{j})
    = -2\,q(v_{i}, v_{j}),
  \end{equation*}
  and similarly $\tilde\ell_{ij}^{2} = -2\,\tilde q(\tilde v_{i},
  \tilde v_{j}).$
  So condition~\eqref{eq:q_mu_condition} is equivalent to
  \begin{equation*}
    \mu^{2}\,\ell_{ij}^{2}=
    \mu_{i}\mu_{j}\,\tilde\ell_{ij}^{2}.
  \end{equation*}
  Solve equations~\eqref{eq:tilde_ell} for $u_{i}$ to
  obtain~\eqref{eq:mu_i}. This completes the proof of
  Lemma~\ref{lem:mu_i}.
\end{proof}

To prove Theorem~\ref{thm:dconfmap}, consider two euclidean
triangulations $(\T,\ell)$ and $(T,\tilde\ell)$, and a pair of
adjacent triangles $ijk$ and $jil$ of $T$. Embed the corresponding
euclidean triangles of $(\T,\ell)$ simultaneously isometrically in the
euclidean plane, and do the same for the corresponding two euclidean
triangles of $(T,\tilde\ell)$. Lemma~\ref{lem:mu_i} tells us what the
circumcircle preserving projective maps are, and we might as well
choose $\mu=1$ in both cases. These two maps fit together continuously
along edge $ij$ if and only if the values of $\mu_i=e^{-u_{i}}$ and
$\mu_j=e^{-u_{j}}$ from one triangle are proportional to those of the
other triangle. Since the value of
$e^{(u_{i}+u_{j})/2}=\tilde\ell_{ij}/\ell_{ij}$ is the same for both
triangles, this is the case if and only if the values of $\mu_i$ and
$\mu_j$, hence also those of $u_{i}$ and $u_{j}$, coincide for both
triangles. This holds for all interior edges if and only if
$(\T,\ell)$ and $(T,\tilde\ell)$ are discretely conformally
equivalent. This completes the proof of
Theorem~\ref{thm:dconfmap}.

\section{Discrete conformal mapping problems}
\label{sec:mapping_problems}

\subsection{Prescribing angle sums at vertices}
\label{sec:prescribe_angle_sums}

Consider the following type of discrete conformal mapping problem,
which is a discrete version of the problem considered by
Troyanov~\cite{troyanov86:_les}:

\begin{problem}[prescribed angle sums]
  \label{prob:prescribe_Theta}
  \textbf{\emph{Given}}
  \begin{compactitem}
  \item a surface triangulation~$\T$,
  \item a discrete conformal class $\cclass$ of discrete metrics
    on
    $\T$,
  \item a desired angle sum $\Theta_{i}$ for each vertex~$i\in
    V_{\T}$,
  \end{compactitem}
  \textbf{\emph{find}} a discrete metric $\tilde\ell$ in the conformal
  class $\cclass$ such that the euclidean triangulation $(\T,
  \tilde\ell)$ has angle sum $\Theta_{i}$ around each vertex $i\in
  V_{\T}$.
\end{problem}

If, in particular, the given desired angle sum $\Theta_{i}$ equals
$2\pi$ for every interior vertex~$i$, then
Problem~\ref{prob:prescribe_Theta} asks for a \emph{flat} euclidean
triangulation in the given conformal class which has prescribed angles
at the boundary. A flat and simply connected euclidean triangulation
can be developed in the plane by laying out one triangle after the
other. Thus, Problem~\ref{prob:prescribe_Theta} comprises as a special
case the following problem.

\begin{problem}[planar triangulation with prescribed boundary angles]
  \label{prob:flatten_with_Theta_on_bdy}
  \textbf{\emph{Given}} 
  \begin{compactitem}
  \item A euclidean triangulation~$(\T,\ell)$ that is
    topologically a disc and
  \item a desired angle sum $\Theta_{i}$ for each boundary vertex~$i$,
  \end{compactitem}
  \textbf{\emph{find}} a discretely conformally equivalent planar
  triangulation $(\T,\tilde\ell)$ with the given angle sums at the
  boundary. (The triangulated planar region may overlap with itself.)
\end{problem}

We also consider a more general type of problem than
Problem~\ref{prob:prescribe_Theta}. Suppose the discrete conformal
class $\cclass$ is given in the form of a representative metric
$\ell\in(\R_{>0})^{E}$. For some vertices $i$ we may prescribe the
(logarithmic) scale factor $u_{i}$ instead of the angle sum~$\Theta_{i}$:

\begin{problem}[prescribed angles sums and fixed scale factors]
  \label{prob:general}
  \textbf{\emph{Given}} 
  \begin{compactitem}
  \item a triangulation~$\T$,
  \item a function
    $\ell\in(\R_{>0})^{E}$ representing a conformal class,
  \item a partition $V=V_{0}\,\dot\cup\, V_{1}$ of the vertex set,
  \item a prescribed logarithmic scale factor $u_i\in\R$ for each
    vertex $i\in V_{0}$
  \item a prescribed angle sum $\Theta_{i}$ for each vertex $i\in
    V_{1}$
  \end{compactitem}
  \textbf{\emph{find}} logarithmic scale factors $u_{i}\in\R$ for the
  remaining vertices $i\in V_{1}$ so that $\tilde\ell$ determined by
  equations~\eqref{eq:tilde_ell} is a discrete metric and $(\T,\tilde\ell)$
  has the prescribed angle sum $\Theta_{i}$ around each vertex $i\in
  V_{1}$.
\end{problem}

For $V_{0}=\emptyset$, $V_{1}=V$, this is just
Problem~\ref{prob:prescribe_Theta}. (If the conformal class $\cclass$
is given in the form of length-cross-ratios
$\lcr\in(\R_{>0})^{\Eint}$, one can obtain a representative
$\ell\in(\R_{>0})^{E}$ using the method described in the constructive
proof of Proposition~\ref{prop:ell_from_lcr}.) 

Note that any instance of Problem~\ref{prob:general} can be reduced to
the special case where $u_{i}=0$ is prescribed for $i\in V_{0}$:
Simply apply first a discrete conformal change of
metric~\eqref{eq:tilde_ell} with the arbitrary prescribed $u_{i}$ for
$i\in V_{0}$.

Analytically, Problem~\ref{prob:general} amounts to solving a system
of nonlinear equations. For the unknown logarithmic scale factors
$u_{i}$ ($i\in V_{1}$), one has to solve the system of angle-sum
equations
\begin{equation}
  \label{eq:angle_sum_equation}
  \sum_{jk:ijk\in T} \tilde\alpha_{jk}^{i} = \Theta_{i},
\end{equation}
with one equation for each vertex $i\in V_{1}$. Here,
$\tilde\alpha_{jk}^{i}$ is the angle at $i$ in triangle $ijk$ of
$(\T,\tilde\ell)$. The angles $\tilde\alpha$ are nonlinear functions
of the new lengths $\tilde\ell$. They can be obtained by invoking, for
example, the cosine rule or the half-angle formula
\begin{equation}
  \label{eq:half_angle_formula}
  \tan\Bigg(\frac{\alpha_{jk}^{i}}{2}\Bigg) =
  \sqrt{
    \frac{
      (-\ell_{ij} + \ell_{jk} + \ell_{ki})(\ell_{ij} + \ell_{jk} - \ell_{ki})
    }{
      (\ell_{ij} - \ell_{jk} + \ell_{ki})(\ell_{ij} + \ell_{jk} + \ell_{ki})
    }
  }\;.
\end{equation}
(Tilde marks over $\alpha$ and $\ell$ have been omitted in this equation to
avoid visual clutter.)

\begin{theorem}
  \label{thm:dconfmap_unique}
  If Problem~\ref{prob:general} has a solution, then the solution is
  unique if $V_{0}\not=\emptyset$ (i.e., at least one scale factor is
  fixed) and unique up to scale if $V_{0}=\emptyset$. The solution can
  be found by minimizing a convex function.
\end{theorem}

\begin{proof}
  This follows from Propositions~\ref{prop:first_variational},
  \ref{prop:E_convex}, and~\ref{prop:E_extend}.
\end{proof}

\begin{corollary}
  If a solution to Problem~\ref{prob:prescribe_Theta} (or
  \ref{prob:flatten_with_Theta_on_bdy}) exists, it is unique up to
  scale, and it can be found by minimizing a convex function.
\end{corollary}

\begin{remark}
  An important special case of Problem~\ref{prob:general} is the
  following: prescribe the angle sum $\Theta_{i}=2\pi$ for interior
  vertices, and $u_{i}$ on the boundary. This is analogous to the
  following boundary value problem of the smooth theory: Given a
  smooth $2$-manifold with boundary $M$ equipped with a Riemannian
  metric $g$, find a conformally equivalent flat Riemannian metric
  $e^{2u}g$ with prescribed $u|_{\partial M}$. Suppose we measure the
  relative distortion of a conformally equivalent Riemannian metric by
  the Dirichlet energy of~$u$, $D(u) = \frac{1}{2}\int_{M}du\wedge{*du}$. Then the conformally equivalent flat Riemannian metrics
  with least distortion are those with $u|_{\partial M}=\const$ Thus,
  up to scale there is a unique least distortion solution obtained by
  setting $u|_{\partial
    M}=0$. \cite[Appendix~E]{springborn_conformal_2008}
\end{remark}

\subsection{Mapping to the sphere}
\label{sec:sphere}

If one can solve Problem~\ref{prob:general}, one can also find
discrete conformal maps from euclidean triangulations that are
topological spheres to polyhedra with vertices on the unit sphere, and
from euclidean triangulations that are topological disks to planar
triangulations with boundary vertices on the unit circle
(Section~\ref{sec:disk}).

Suppose $(\T,\ell)$ is a euclidean triangulation that is topologically
a sphere. To map it to a polyhedron with vertices on the unit sphere,
proceed as follows:

\begin{compactenum}[1.]
\item Choose a vertex $k$ and apply a discrete conformal change of
  metric~\eqref{eq:tilde_ell} so that afterwards all edges incident
  with $k$ have the same length, say $\tilde\ell_{ki}=1$ for all
  neighbors $i$ of $k$. For example, let $e^{u_{i}/2}=\ell_{ki}^{-1}$ if 
  $i$ is a neighbor of $k$ and $1$ otherwise.
\item Let $\T'$ be $\T$ minus the open star of $k$. This is
  topologically a closed disk.
\item Solve Problem~\ref{prob:general} for $T'$ with prescribed
  $\Theta_{i}=2\pi$ for interior vertices $i$ and prescribed $u_{i}=0$
  for boundary vertices. (Suppose a solution exists.) The result is a
  planar triangulation.
\item Map the vertices of this planar triangulation to the unit sphere
  by stereographic projection. Add another vertex (the image of the
  removed vertex $k$) on the sphere at the center of the stereographic
  projection. Build a geometric simplicial complex using these
  vertices and the combinatorics of $\T$.
\end{compactenum}

\begin{proposition}
  The result of this procedure is a polyhedron with vertices on the
  sphere that is discretely conformally equivalent to $(\T,\ell)$. (It
  may not be convex. It is also possible that the planar triangulation
  obtained in step three overlaps with itself. In this case the star
  of $k$ in the image polyhedron is not embedded.)
\end{proposition}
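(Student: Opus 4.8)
The plan is to show that the polyhedron's induced metric $\bar\ell$ (the chordal edge lengths between the vertices placed on the unit sphere) is discretely conformally equivalent to the metric $\tilde\ell$ produced by step~1, which is conformally equivalent to $\ell$ by construction; transitivity of the equivalence relation of Definition~\ref{def:d_conf_equiv} then finishes the argument. The vertices lie on the sphere by construction and no embeddedness is asserted, so the only substantive claim is the conformal equivalence. I would split the edges of $\T$ into those lying in $\T'$ (none incident with $k$) and those in the star of $k$, and handle the two groups separately.

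For the edges of $\T'$ I would argue as follows. Step~3 produces a metric $\hat\ell$ on $\T'$ with $\hat\ell_{ij}=e^{(u_i+u_j)/2}\tilde\ell_{ij}$, where $u$ vanishes on the boundary (the link of $k$); developing the resulting flat disk gives planar vertex positions $v\colon V_{\T'}\to\R^2$. The assignment $w_i=\Phi(v_i)$, with $\Phi$ the inverse stereographic projection onto $S^2\subset\R^3$, is the restriction to the plane of an inversion of $\widehat{\R^3}$, hence a M\"obius transformation. By the M\"obius-invariance proposition of Section~\ref{sec:Moebius}, the chordal metric $\bar\ell$ restricted to $\T'$ is conformally equivalent to $\hat\ell$; explicitly, the stereographic scale factors are $e^{s_i}=2/(1+\|v_i\|^2)$, so that $\bar\ell_{ij}=e^{(s_i+s_j)/2}\hat\ell_{ij}$. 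Setting $U_i=s_i+u_i$ for $i\in V_{\T'}$ then gives $\bar\ell_{ij}=e^{(U_i+U_j)/2}\tilde\ell_{ij}$ for every edge of $\T'$.

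It remains to extend $U$ to the apex $k$ so that the relation also holds on the edges $ki$ of the star, and here the normalization from step~1 is decisive. Since $i$ is a neighbour of $k$, it is a boundary vertex of $\T'$, so $u_i=0$ and $U_i=s_i$; moreover $\tilde\ell_{ki}=1$. The new edge length is the distance from the pole $w_k$ to $w_i$, which the standard stereographic formula (for the unit sphere) gives as $\bar\ell_{ki}=\|w_k-w_i\|=2/\sqrt{1+\|v_i\|^2}=\sqrt2\,e^{s_i/2}$. Thus $\bar\ell_{ki}=e^{(s_i+\log 2)/2}\,\tilde\ell_{ki}$, and putting $U_k=\log 2$ yields $\bar\ell_{ki}=e^{(U_i+U_k)/2}\tilde\ell_{ki}$ for \emph{every} neighbour $i$ of $k$ simultaneously. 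Combining the two cases, $\bar\ell_{mn}=e^{(U_m+U_n)/2}\tilde\ell_{mn}$ on all edges of $\T$, so $\bar\ell$ and $\tilde\ell$ are conformally equivalent, and hence so are $\bar\ell$ and $\ell$.

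I expect the main obstacle to be precisely this last consistency check at the apex: a \emph{single} value $U_k$ must serve along \emph{all} edges of the star of $k$, and this works only because step~1 arranged $\tilde\ell_{ki}$ to be one and the same constant while step~3 forced $u$ to vanish on the boundary; without either normalization the required scale factor at $k$ would differ from edge to edge. By contrast the treatment of the $\T'$ edges is either definitional or a direct appeal to M\"obius invariance, and the spherical faces incident with $k$ are automatically non-degenerate (three distinct points on a sphere are never collinear), so no triangle-inequality issue arises for the newly created triangles.
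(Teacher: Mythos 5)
Your argument is correct, and it reaches the conclusion by a genuinely different route than the paper. The paper verifies discrete conformal equivalence through the length-cross-ratio characterization (Proposition~\ref{prop:conf_equiv_in_terms_of_lcr}): after step~1 the length-cross-ratios on the edges incident with $k$ and on the edges opposite $k$ degenerate to ratios of consecutive boundary edge lengths of $\T'$, which step~3 leaves untouched because $u=0$ on the boundary; the vertex $k$ is then reinserted at $\infty$ in $\widehat\C$, and since stereographic projection is a M\"obius transformation, the absolute values of all complex cross-ratios --- including the degenerate ones at $\infty$ --- are preserved. You instead verify Definition~\ref{def:d_conf_equiv} directly by exhibiting the logarithmic scale factors: $U_i=s_i+u_i$ on $V_{\T'}$ with $e^{s_i}=2/(1+\|v_i\|^2)$ the stereographic conformal factor, and $U_k=\log 2$ at the apex. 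Your computation at the apex is right ($\|w_k-w_i\|=2/\sqrt{1+\|v_i\|^2}=\sqrt{2}\,e^{s_i/2}$ for the unit sphere projected from its ``north pole''), and you correctly isolate where the two normalizations of the procedure are used: $\tilde\ell_{ki}\equiv 1$ from step~1 and $u=0$ on the link of $k$ from step~3 are exactly what make a single constant $U_k$ serve all edges of the star. What the paper's route buys is uniformity --- the point at infinity is handled by the same cross-ratio argument as every other vertex, with no computation; what your route buys is explicitness --- it produces the actual scale factors of the equivalence, including the otherwise invisible value $\log 2$ at the reinserted vertex, and makes transparent that the claim would fail without the normalizations. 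Both are complete proofs of the statement.
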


\begin{proof}
  After Step~1, the length-cross-ratio for an edge $ki$ incident with
  $k$ is the quotient of the lengths of two consecutive edges $mi$,
  $ij$ in the boundary of $T'$. This is not changed in Step~$3$
  because $u=0$ on the boundary. Further, the length-cross-ratio for
  an edge $ij$ opposite $k$ as in Figure~\ref{fig:lcr} is then the
  quotient $\ell_{il}/\ell_{lj}$. This is also not changed in Step~$3$
  because $u=0$ on the boundary. Now imagine that before Step~4 you
  reinsert $k$ at $\infty$ in the plane, which you identify with the
  (extended) complex plane. Then the absolute values of the complex
  cross-ratios for all edges are the same as in $(T,\ell)$.
\end{proof}

\begin{remark}
  The method presented here is a variation of a method
  described~\cite{springborn_conformal_2008}. The old version requires
  an input triangulation that is immersed in some $\R^{n}$ with
  straight edges.
\end{remark}

\subsection{Mapping to the disk}
\label{sec:disk}

Suppose $(\T,\ell)$ is a euclidean triangulation that is topologically
a closed disk. To map it to a triangulated circular polygon, proceed
as follows (see Figure~\ref{fig:map_to_disk_steps}):
\begin{figure}
  \centering
  \includegraphics[width=0.7\textwidth]{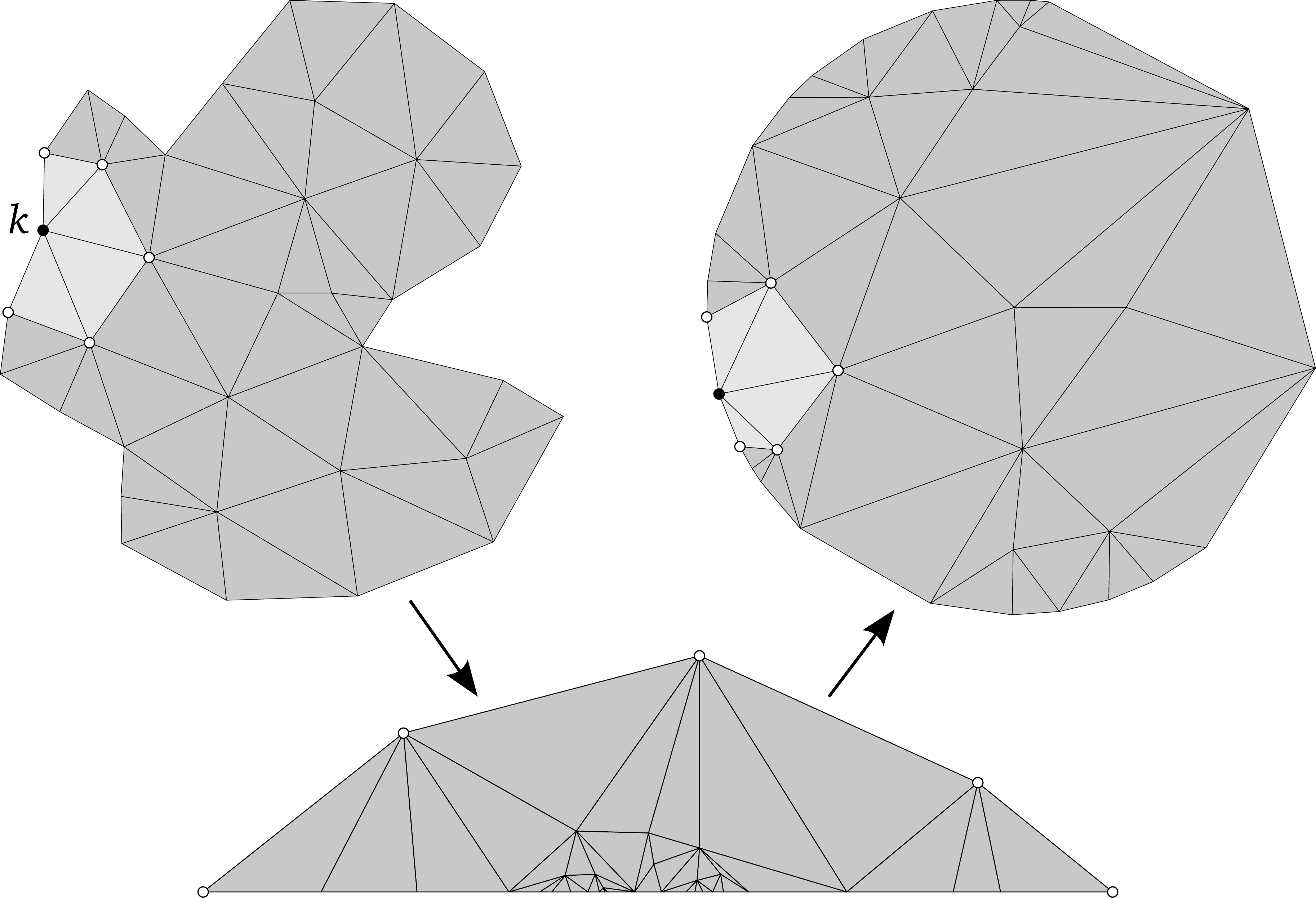}
  \caption{Mapping to the disk.}
  \label{fig:map_to_disk_steps}
\end{figure}

\begin{compactenum}[1.]
\item Choose a boundary vertex $k$ and apply a discrete conformal change of
  metric~\eqref{eq:tilde_ell} so that afterwards all edges incident
  with $k$ have the same length, say $\tilde\ell_{ki}=1$ for all
  neighbors $i$ of $k$. For example, let $e^{u_{i}/2}=\ell_{ki}^{-1}$ if 
  $i$ is a neighbor of $k$ and $1$ otherwise.
\item Let $\T'$ be $\T$ minus the open star of $k$. Suppose this is
  topologically still a closed disk.
\item Solve Problem~\ref{prob:general} for $\T'$, with prescribed
  $\Theta_{i}=2\pi$ for interior vertices of $\T'$, $\Theta_{i}=\pi$
  for boundary vertices of $T'$ that are not neighbors of $k$ in $\T$,
  and prescribed $u_{i}=0$ for the neighbors of $k$ in $\T$. (Suppose
  a solution exists.) The result is a planar triangulation. All
  boundary edges except the neighbors of $k$ in $\T$ are contained in
  one straight line.
\item Apply a M\"obius transformation to the vertices that maps this
  straight line to a circle and the other vertices inside this
  circle. Reinsert $k$ at the image point of $\infty$ under this
  M\"obius transformation. 
\end{compactenum}

\begin{proposition}
  The result of this procedure is a planar triangulation that is
  discretely conformally equivalent to $(\T,\ell)$ and has a boundary
  polygon that is inscribed in a circle.
\end{proposition}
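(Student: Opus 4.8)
The plan is to follow the proof for the sphere case on the previous page; the main work is again a bookkeeping of length-cross-ratios, with the role of the removed apex now played by $k$ placed at $\infty$. Since Step~1 is a discrete conformal change of metric, the triangulation it produces lies in the conformal class $\cclass$, so it suffices to show that the triangulation returned by Step~4 is discretely conformally equivalent to the one present after Step~1. By Proposition~\ref{prop:conf_equiv_in_terms_of_lcr} this reduces to checking that every length-cross-ratio of $\T$ is unchanged. The device that makes this visible is to reinsert $k$ at $\infty$ just before Step~4, identifying the plane with $\hat\C$: the final vertex positions are then the image of this ``$k$ at $\infty$'' configuration under the M\"obius transformation $M$ of Step~4, so by the M\"obius invariance of length-cross-ratios (Section~\ref{sec:Moebius}, computing them as moduli of complex cross-ratios as in the Remark there) it is enough to compare the ``$k$ at $\infty$'' length-cross-ratios with the Step~1 ones.

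I would split the interior edges of $\T$ into three families. (a)~Edges that stay interior in $\T'$: both adjacent triangles survive into $\T'$, Step~3 is a conformal change on $\T'$, and $M$ is M\"obius, so the length-cross-ratio is preserved automatically. (b)~The link edges $i_ji_{j+1}$, which are interior in $\T$ but become boundary edges of $\T'$; in $\T$ such an edge separates a star triangle $ki_ji_{j+1}$ from a triangle $i_ji_{j+1}l$. (c)~The interior edges $ki_j$ between two consecutive star triangles. For (b)~and (c)~the crucial facts are that after Step~1 the edges at $k$ all have length $1$, and in Step~3 the scale factors satisfy $u=0$ at every neighbor $i_j$ of $k$. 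For a type-(b) edge, substituting $z_k=\infty$ into the complex cross-ratio collapses it to the ratio $\ell_{i_jl}/\ell_{i_{j+1}l}$ of the two edges at the far vertex $l$ in the Step~3 metric; since $u_{i_j}=u_{i_{j+1}}=0$ the common factor $e^{u_l/2}$ cancels, so this equals the Step~1 value, which (because $\ell_{ki_j}=\ell_{ki_{j+1}}=1$) is exactly $\lcr_{i_ji_{j+1}}$ by~\eqref{eq:lcr}. A symmetric computation handles type~(c): with $z_k=\infty$ the cross-ratio of $ki_j$ collapses to $\ell_{i_ji_{j+1}}/\ell_{i_ji_{j-1}}$, which is scale-factor invariant because $u$ vanishes on all the $i_m$, and equals the Step~1 value $\lcr_{ki_j}$. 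Hence all length-cross-ratios agree and the two triangulations are discretely conformally equivalent.

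It remains to see that the boundary polygon is inscribed in a circle and that the output is an honest planar triangulation. By construction of Step~3 the non-neighbor boundary vertices $c_1,\dots,c_p$ carry angle sum $\pi$, so the boundary arc $i_nc_p\cdots c_1i_1$ is straight, i.e.\ lies on a line $L$; together with $\infty$ this line is a circle of $\hat\C$ passing through the point that becomes $k$. The M\"obius transformation $M$ sends $L\cup\{\infty\}$ to a genuine circle $C$, so the boundary vertices $k=M(\infty),M(i_1),M(c_1),\dots,M(c_p),M(i_n)$ of $\T$ all lie on $C$; this is the asserted inscribed boundary polygon. Reconnecting the transformed vertices by straight segments with the combinatorics of $\T$ gives a geometric simplicial complex, and the reinserted star triangles $ki_ji_{j+1}$ are nondegenerate because $k=M(\infty)$ is distinct from the images of the finite vertices, so we obtain a planar triangulation (possibly self-overlapping, exactly as in the sphere case).

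I expect the main obstacle to be the careful treatment of families (b)~and (c): one must evaluate the cross-ratios with one vertex at infinity and verify that the two normalizations ($\ell=1$ on the edges at $k$ from Step~1, and $u=0$ on the neighbors of $k$ from Step~3) conspire to cancel all scale factors. The inscribed-circle statement, by contrast, follows almost immediately from the line-to-circle behavior of $M$.
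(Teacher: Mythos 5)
Your proof is correct and takes essentially the approach the paper intends: the paper omits the proof of this proposition, saying it follows the sphere-case argument with no new ideas, and your three-way split of the interior edges of $\T$, the reinsert-$k$-at-$\infty$ cross-ratio computations exploiting $\ell_{ki_j}=1$ and $u=0$ on the neighbors of $k$, and the line-to-circle behavior of the M\"obius transformation are precisely that transfer written out in detail.
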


\noindent%
We omit the proof because no new ideas are needed.

\begin{remark}
  Note that for Problem~\ref{prob:general} in step~3 to be solvable,
  the triangulation $\T$ should not have any ears (i.e., triangles on
  the boundary that are attached by one edge only). Prescribing a
  total angle of $\pi$ at boundary vertices forces such triangles to
  degenerate.
\end{remark}

\section{Two variational principles}
\label{sec:variational_principles}

\subsection{The first variational principle}
\label{sec:variational_principle_1}

The system of nonlinear equations described in the previous section
turns out to be variational. Solutions of the conformal mapping
problems correspond to the critical points of the function
$E_{\T,\Theta,\lambda}$ defined as follows. The precise statement of
this first variational principle is
Proposition~\ref{prop:first_variational}.

Let $\T$ be a surface triangulation, $\Theta\in\R^{V}$, and
$\lambda\in\R^{E}$. For now (we will later extend the domain of
definition to $\R^{V}$) define the real valued function
$E_{\T,\Theta,\lambda}(u)$ on the open subset of $\R^{V}$ containing
all $u$ such that $\tilde\ell$ determined by~\eqref{eq:tilde_ell} is a
discrete metric (that is, satisfies the triangle inequalities):
\begin{equation}
  \label{eq:E_of_lambda}
  \begin{split}
    E_{\T,\Theta,\lambda}(u)=\sum_{ijk\in T} \Big(& 
    \tilde\alpha_{ij}^{k}\tilde\lambda_{ij} +
    \tilde\alpha_{jk}^{i}\tilde\lambda_{jk} + 
    \tilde\alpha_{ki}^{j}\tilde\lambda_{ki} +
    2\ML(\tilde\alpha_{ij}^{k}) +
    2\ML(\tilde\alpha_{jk}^{i}) +
    2\ML(\tilde\alpha_{ki}^{j}) \\
    &- \tfrac{\pi}{2}(\tilde\lambda_{ij}+\tilde\lambda_{jk}+\tilde\lambda_{ki})
    \Big)
    +\sum_{i\in V}\Theta_{i} u_{i}\,.
  \end{split}
\end{equation}
The first sum is taken over all triangles, $\tilde\alpha_{jk}^{i}$
denotes the angle at vertex~$i$ in triangle~$ijk$ with side lengths
$\tilde\ell=e^{\tilde\lambda/2}$, 
\begin{equation*}
  \tag{\ref{eq:tilde_lambda}}
  \tilde\lambda_{ij}=\lambda_{ij} + u_{i} + u_{j},
\end{equation*}
and $\ML(x)$ is
Milnor's Lobachevsky function,
\begin{equation}
  \label{eq:ML}
  \ML(x) = -\int_{0}^{x}\log\big|2\sin(t)\big|\,dt.
\end{equation}
\begin{figure}
  \centering
  \includegraphics[width=0.4\linewidth]{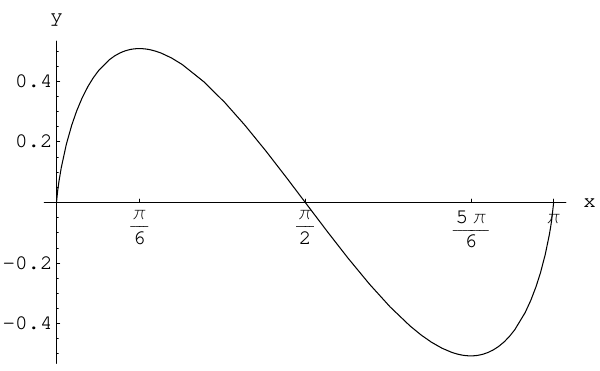}
  \caption{Graph of Milnor's Lobachevsky function, $y=\ML(x)$. The
    function is $\pi$-periodic, odd, and smooth except at $x\in
    \pi\,\Z$, where its tangents are vertical.}
  \label{fig:lobachevskyplot}
\end{figure}%
(Figure~\ref{fig:lobachevskyplot} shows a graph of this function.) The
second sum is taken over all vertices. It is linear in $u$.

\begin{remark}
  The notation $\ML(x)$, using a letter from the Cyrillic alphabet,
  and the name ``Lobachevsky function'' are due to
  Milnor\,\cite{milnor_hyperbolic_1982}\,\cite{milnor_to_1994}. Lobachevsky
  used a slightly different function which is also known as the
  Lobachevsky function and often denoted $L(x)$. To distinguish these
  two functions, we call $\ML(x)$ Milnor's Lobachevsky function. It is
  almost the same as Clausen's integral
  \cite{lewin_polylogarithms_1981},
  $\operatorname{Cl}_{2}(x)=\frac{1}{2}\ML(2x)$.
\end{remark}

\begin{proposition}[First derivative]
  \label{prop:grad_E}
  The partial derivative of $E_{\T,\Theta,\lambda}$ with respect to
  $u_{i}$ is
  \begin{equation*}
    \frac{\partial}{\partial u_{i}}\, E_{\T,\Theta,\lambda} = 
    \Theta_{i}-\sum_{jk:ijk\in T} \tilde\alpha_{jk}^{i}\,,
  \end{equation*}
  where the sum is taken over all angles around vertex $i$.
\end{proposition}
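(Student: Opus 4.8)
The plan is to differentiate $E_{\T,\Theta,\lambda}$ term by term. The linear sum $\sum_{i\in V}\Theta_{i}u_{i}$ immediately contributes $\Theta_{i}$, so everything reduces to differentiating the geometric summand of a single triangle and summing over the triangles incident to the vertex $i$. Write the summand for a triangle $ijk$ as
\[
  F_{ijk}=\sum_{\mathrm{cyc}}\Big(\tilde\alpha^{k}_{ij}\tilde\lambda_{ij}
  +2\ML(\tilde\alpha^{k}_{ij})-\tfrac{\pi}{2}\tilde\lambda_{ij}\Big),
\]
where the cyclic sum pairs each edge log-length with the \emph{opposite} angle, and each $\tilde\alpha$ is regarded as a function of the three $\tilde\lambda$ through the triangle geometry. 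Since $\tilde\lambda_{ij}=\lambda_{ij}+u_{i}+u_{j}$, we have $\partial\tilde\lambda_{pq}/\partial u_{i}=1$ when $i\in\{p,q\}$ and $0$ otherwise, so by the chain rule I first need the partial derivative of $F_{ijk}$ with respect to a single edge log-length $\tilde\lambda_{a}$.

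The key computation is that this single-edge derivative collapses to $\partial F_{ijk}/\partial\tilde\lambda_{a}=\alpha-\tfrac{\pi}{2}$, where $\alpha$ is the angle opposite the edge $a$. The term in which $\tilde\lambda_{a}$ occurs undifferentiated produces exactly $\alpha-\tfrac{\pi}{2}$. Every remaining term carries a factor $\partial(\text{angle})/\partial\tilde\lambda_{a}$, and using $\ML'(x)=-\log|2\sin x|$ the coefficient of $\partial\tilde\alpha/\partial\tilde\lambda_{a}$ is $\tilde\lambda-2\log|2\sin\tilde\alpha|=\log\big((\tilde\ell/(2\sin\tilde\alpha))^{2}\big)$, where $\tilde\ell=e^{\tilde\lambda/2}$ is the corresponding edge length. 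By the law of sines, $\tilde\ell/(2\sin\tilde\alpha)$ equals the circumradius $R$ for \emph{every} edge-angle pair, so this coefficient is the same number $2\log R$ for all three angles. Factoring it out, the residual contribution is $2\log R$ times $\partial(\tilde\alpha^{k}_{ij}+\tilde\alpha^{i}_{jk}+\tilde\alpha^{j}_{ki})/\partial\tilde\lambda_{a}$, which vanishes because the angle sum is the constant $\pi$. This cancellation is the crux of the proof, and it is why the individual derivatives $\partial\tilde\alpha/\partial\tilde\lambda_{a}$ never have to be computed.

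With the single-edge derivative in hand, the chain rule gives the contribution of triangle $ijk$ to $\partial E/\partial u_{i}$ as the sum over the two edges of $ijk$ incident to $i$, namely $(\tilde\alpha^{k}_{ij}-\tfrac{\pi}{2})+(\tilde\alpha^{j}_{ki}-\tfrac{\pi}{2})$. Invoking $\tilde\alpha^{k}_{ij}+\tilde\alpha^{j}_{ki}=\pi-\tilde\alpha^{i}_{jk}$ once more, this equals $-\tilde\alpha^{i}_{jk}$, minus the angle at $i$ itself. Summing over all triangles incident to $i$ and adding the $\Theta_{i}$ from the linear term yields the claimed formula. I expect the only delicate point to be the cancellation in the middle paragraph: it relies simultaneously on the normalization $\lambda=2\log\ell$, on the factor $2$ in front of $\ML$, and on the law of sines identifying the common coefficient with the logarithm of the circumradius, together with the constancy of the angle sum.
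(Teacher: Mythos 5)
Your proof is correct and is essentially the paper's argument: the crux --- that the coefficient of each $\partial\tilde\alpha/\partial\tilde\lambda$ equals $2\log R$ by the law of sines, so these terms cancel against the constancy of the angle sum --- is exactly the computation the paper carries out for the single-triangle function $f(x,y,z)$ in Proposition~\ref{prop:grad_f}, and your chain-rule bookkeeping matches the paper's reduction via equation~\eqref{eq:E_with_f}. The only cosmetic difference is that you perform this computation inline on the triangle summand of $E_{\T,\Theta,\lambda}$ instead of factoring it through $f$.
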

\begin{proof}
  This follows from equation~\eqref{eq:E_with_f} and
  Proposition~\ref{prop:grad_f}.
\end{proof}

\begin{proposition}[First variational principle]
  \label{prop:first_variational}
  Let $\cclass$ be a discrete conformal class with representative
  metric $\ell=e^{\lambda/2}$, and let
  $\tilde\ell=e^{\tilde\lambda/2}$ where $\tilde\lambda$ is the
  function of $u$ defined by equations~\eqref{eq:tilde_lambda}. Then
  \begin{compactitem}[$\bullet$]
  \item $\tilde\ell$ solves Problem~\ref{prob:prescribe_Theta} if and
    only if $u$ is a critical point of $E_{\T,\Theta,\lambda}$,
  \item $\tilde\ell$ solves Problem~\ref{prob:general} if and only if
    $u$ is a critical point of $E_{\T,\Theta,\lambda}$ with fixed
    $u_{i}$ for $i\in V_{0}$. (In this case, the values of
    $\Theta$ for $i\in V_{0}$ are irrelevant.)
  \end{compactitem}
\end{proposition}

\begin{proof}
  This follows immediately from Proposition~\ref{prop:grad_E} because
  $\frac{\partial}{\partial u_{i}}\, E_{\T,\Theta,\lambda} = 0$ is
  equivalent to the angle sum condition~\eqref{eq:angle_sum_equation}.
\end{proof}

\begin{proposition}[Local convexity]
  \label{prop:E_convex}
  The function $E_{\T,\Theta,\lambda}$ is locally convex, that is, its
  second derivative 
  $\sum\frac{\partial^{2}E_{\T,\Theta,\lambda}}{\partial u_{i}\,\partial
  u_{j}}\,du_{i}\,du_{j}$ 
  is a positive
  semidefinite quadratic form. The kernel is $1$-dimensional and
  consists of the constants in $\R^{V}$.
\end{proposition}

\begin{proof}
  This follows from equation~\eqref{eq:E_with_f} and
  Proposition~\ref{prop:f_convex}.
\end{proof}

\begin{proposition}[Extension]
  \label{prop:E_extend}
  The function $E_{\T,\Theta,\lambda}$ can be extended to a convex
  continuously differentiable function on $\R^{V}$.
\end{proposition}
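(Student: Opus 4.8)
\emph{The plan is to} reduce to a single triangle and then extend the per-triangle contribution by its limiting values at degenerate triangles. Writing the first sum in \eqref{eq:E_of_lambda} as $\sum_{ijk\in T} f(\tilde\lambda_{ij},\tilde\lambda_{jk},\tilde\lambda_{ki})$, where $f$ denotes the bracketed per-triangle expression, the whole energy is $E_{\T,\Theta,\lambda}(u)=\sum_{ijk} f + \sum_i\Theta_i u_i$: a sum of compositions of $f$ with the affine maps $u\mapsto\tilde\lambda$ of \eqref{eq:tilde_lambda}, plus a linear term. Since convexity and continuous differentiability are preserved under composition with affine maps, under finite sums, and under addition of linear functions, it suffices to extend $f$, viewed as a function of the three logarithmic edge lengths of one triangle, to a convex $C^1$ function on $\R^3$. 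On the open feasible region $\Omega$ where the three triangle inequalities hold, $f$ is the smooth geometric expression. The three complementary regions each correspond to one edge being too long; these are pairwise disjoint (the longest edge is then the unique violator) and, together with $\Omega$, cover $\R^3$. On the region where, say, edge $jk$ is too long I would set the angles to their limiting values $\tilde\alpha^i_{jk}=\pi$ and $\tilde\alpha^j_{ki}=\tilde\alpha^k_{ij}=0$; because $\ML(0)=\ML(\pi)=0$, the bracket in \eqref{eq:E_of_lambda} collapses to the affine function $\tfrac{\pi}{2}(\tilde\lambda_{jk}-\tilde\lambda_{ij}-\tilde\lambda_{ki})$.

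\emph{The next step is} continuity and continuous differentiability across the boundary between $\Omega$ and a degenerate region. As a feasible triangle flattens with $e^{\tilde\lambda_{jk}/2}\to e^{\tilde\lambda_{ij}/2}+e^{\tilde\lambda_{ki}/2}$, the angle opposite the longest edge tends to $\pi$ and the other two to $0$, so $f$ and the affine extension agree on the shared boundary; hence $f$ is continuous on $\R^3$. For the gradient I would invoke the single-triangle version of the derivative formula behind Proposition~\ref{prop:grad_E}, namely $\partial f/\partial\tilde\lambda_{jk}=\tilde\alpha^i_{jk}-\tfrac{\pi}{2}$ and cyclically. Here the essential cancellation is that, by the law of sines, $\tilde\lambda_e-2\log|2\sin(\cdot)|=2\log R$ is the same quantity for all three angles, while $\sum\tilde\alpha=\pi$ is constant, so the terms involving derivatives of the angles drop out. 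Thus $\nabla f=(\tilde\alpha-\tfrac{\pi}{2})$ depends continuously on the lengths up to the boundary, where its limit $(\tfrac{\pi}{2},-\tfrac{\pi}{2},-\tfrac{\pi}{2})$ matches the gradient of the affine extension. Therefore $f$ is $C^1$ on all of $\R^3$.

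\emph{The point that requires care,} and which I expect to be the main obstacle, is convexity of the extension, because the feasible region $\Omega$ is \emph{not} convex (a convex combination of two feasible discrete metrics may violate a triangle inequality), so one cannot simply glue convex pieces over convex domains. I would instead use the criterion that a $C^1$ function on $\R^n$ is convex if and only if its restriction to every straight line is convex, and verify the latter. Restricting $f$ to a line $t\mapsto\lambda(t)$ gives a $C^1$ function of $t$; on the open, dense set of parameters for which $\lambda(t)\in\Omega$ it has nonnegative second derivative by local convexity (the single-triangle case of Proposition~\ref{prop:E_convex}), and on the parameters for which $\lambda(t)$ lies in a degenerate region it is affine. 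Hence the first derivative of the restriction is continuous, nondecreasing on each feasible subinterval, and constant on each degenerate subinterval; by continuity it matches at the transition points, so it is nondecreasing on the whole line. Consequently every line-restriction is convex, $f$ extends to a convex $C^1$ function on $\R^3$, and therefore $E_{\T,\Theta,\lambda}$ extends to a convex $C^1$ function on $\R^V$.
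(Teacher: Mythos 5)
Your proposal follows essentially the same route as the paper: equation~\eqref{eq:E_with_f} reduces everything to the single-triangle function $f$ of Section~\ref{sec:function_f}, which Proposition~\ref{prop:f_extend} extends exactly as you do (angles $\pi,0,0$ on the region where one side is too long), with $C^1$-smoothness following from the continuity of the angles together with the gradient formula $\nabla f=(\alpha,\beta,\gamma)$ of Proposition~\ref{prop:grad_f}, and convexity from ``locally convex inside $\mathcal A$, linear outside.'' On the convexity step you are in fact more careful than the paper, which leaves the gluing argument implicit; your line-restriction argument is the right way to make it precise. The one point you should still supply: from ``the derivative of the restriction is continuous, nondecreasing on each feasible subinterval, and constant on each degenerate subinterval'' you cannot conclude monotonicity on the whole line unless the transition points do not accumulate (a continuous function that is locally nondecreasing on a dense open set need not be nondecreasing --- think of the Cantor function). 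Here this is easily repaired: each degenerate region $\{e^{x}\geq e^{y}+e^{z}\}$ is convex, being the sublevel set $\{\log(e^{y-x}+e^{z-x})\leq 0\}$ of a log-sum-exp of affine functions, so a line meets each of the three degenerate regions in at most one interval and the feasible set in at most four; with finitely many pieces your matching-at-transition-points argument closes.
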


\begin{proof}
  This follows from equation~\eqref{eq:E_with_f} and
  Proposition~\ref{prop:f_extend}.
\end{proof}

In fact, one has an explicit formula for the second derivative of
$E_{\T,\Theta,\lambda}$. This is helpful from the practical point of
view, because it allows one to use more powerful algorithms to
minimize $E_{\T,\Theta,\lambda}$ and thus solve the discrete conformal
mapping problems. It is also interesting from the theoretical point of
view, because the second derivative of $E_{\T,\Theta,\lambda}$ at $u$
is the well known finite-element approximation of the Dirichlet energy
(the cotan-formula) for a triangulation with edge lengths
$\tilde\ell$~\cite{duffin_distributed_1959}
\cite{pinkall_computing_1993}:

\begin{proposition}[Second derivative]
  \label{prop:hess_E}
  The second derivative of $E_{\T,\Theta,\lambda}$ at $u$ is
  \begin{equation*}
    \sum_{i,j\in V}\frac{\partial^{2}E_{\T,\Theta,\lambda}}{\partial
      u_{i}\partial u_{j}} 
    \,du_{i}\,du_{j}
    = \frac{1}{2}\,\sum_{ij\in E}w_{ij}(u)(du_{i}-du_{j})^{2},
  \end{equation*}
  where
  $w_{ij}(u)=\frac{1}{2}(\cot\tilde\alpha_{ij}^{k}+\cot\tilde\alpha_{ij}^{l})$
  if $ij$ is an interior edge with opposite vertices $k$ and $l$ and
  $w_{ij}(u)=\frac{1}{2}\cot\tilde\alpha_{ij}^{k}$ if $ij$ is a
  boundary edge with opposite vertex $k$. This assumes all triangle
  inequalities are satisfied. If triangle inequalities are violated,
  the cotangent terms for the corresponding triangles have to be
  replaced with $0$.
\end{proposition}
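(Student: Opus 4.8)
The plan is to differentiate the gradient formula of Proposition~\ref{prop:grad_E} once more and reduce everything to the behavior of the three angles of a single Euclidean triangle under changes of its logarithmic edge lengths. Since $\partial E_{\T,\Theta,\lambda}/\partial u_i = \Theta_i - \sum_{jk:ijk\in T}\tilde\alpha_{jk}^i$ and the term $\Theta_i$ is constant, the Hessian is minus the differential of the vertex angle sums, so that $\sum_{i,j}(\partial^2 E_{\T,\Theta,\lambda}/\partial u_i\partial u_j)\,du_i\,du_j = -\sum_{i\in V}du_i\,d\big(\sum_{jk:ijk\in T}\tilde\alpha_{jk}^i\big)$. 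Every summand involves only the angles of one triangle, so it suffices to analyze a single triangle and then sum the contributions.

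First I would establish how the angles of a single Euclidean triangle depend on its logarithmic edge lengths. Differentiating the law of cosines, and using $\tilde\ell=e^{\tilde\lambda/2}$, $\sin\tilde\alpha_{jk}^i = 2A/(\tilde\ell_{ki}\tilde\ell_{ij})$ with $A$ the triangle area, together with the identity $\cot\tilde\alpha_{jk}^i = (\tilde\ell_{ki}^2+\tilde\ell_{ij}^2-\tilde\ell_{jk}^2)/(4A)$, one finds $\partial\tilde\alpha_{jk}^i/\partial\tilde\lambda_{jk}=\tfrac12(\cot\tilde\alpha_{ki}^j+\cot\tilde\alpha_{ij}^k)$ for the opposite edge and $\partial\tilde\alpha_{jk}^i/\partial\tilde\lambda_{ij}=-\tfrac12\cot\tilde\alpha_{ki}^j$ for an adjacent edge. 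These assemble into the compact form
\[
d\tilde\alpha_{jk}^i = \tfrac12\cot\tilde\alpha_{ij}^k\,(d\tilde\lambda_{jk}-d\tilde\lambda_{ki}) + \tfrac12\cot\tilde\alpha_{ki}^j\,(d\tilde\lambda_{jk}-d\tilde\lambda_{ij}).
\]
Summing the three cyclic versions recovers $d(\tilde\alpha_{jk}^i+\tilde\alpha_{ki}^j+\tilde\alpha_{ij}^k)=0$, a convenient check reflecting the angle sum $\pi$.

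Next I would substitute the chain rule $d\tilde\lambda_{pq}=du_p+du_q$, which turns differences of log-lengths into differences of scale factors: $d\tilde\lambda_{jk}-d\tilde\lambda_{ki}=du_j-du_i$ and $d\tilde\lambda_{jk}-d\tilde\lambda_{ij}=du_k-du_i$, whence $d\tilde\alpha_{jk}^i = \tfrac12\cot\tilde\alpha_{ij}^k(du_j-du_i)+\tfrac12\cot\tilde\alpha_{ki}^j(du_k-du_i)$. Collecting the contribution of one triangle $ijk$ to $-\sum_i du_i\,d\tilde\alpha^i$ and regrouping its three angle differentials, all mixed terms organize into perfect squares, and the triangle contributes exactly $\tfrac12\big(\cot\tilde\alpha_{ij}^k(du_i-du_j)^2 + \cot\tilde\alpha_{jk}^i(du_j-du_k)^2 + \cot\tilde\alpha_{ki}^j(du_k-du_i)^2\big)$, i.e.\ for each of its edges the cotangent of the opposite angle times the squared difference of the endpoint scale factors. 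Summing over all triangles and regrouping by edges then assigns each interior edge $ij$ the total weight $\cot\tilde\alpha_{ij}^k+\cot\tilde\alpha_{ij}^l=2w_{ij}$ and each boundary edge the weight $\cot\tilde\alpha_{ij}^k=2w_{ij}$, which reproduces the claimed cotan expression and exhibits the Hessian as the weighted (cotan) Laplacian of the triangulation, consistent with the positive semidefiniteness of Proposition~\ref{prop:E_convex}.

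The bookkeeping above is routine; the step I expect to require the most care is the degenerate case. When a triangle violates its triangle inequalities, its angles are not defined classically, so the literal computation breaks down on the boundary of $\Lambda$. Here I would invoke Proposition~\ref{prop:E_extend}: $E_{\T,\Theta,\lambda}$ extends to a convex $C^1$ function on all of $\R^V$, and on the open region where a given triangle is degenerate its limiting angles are constant ($0$ or $\pi$) and $\ML$ vanishes there, so that triangle's contribution is affine in the relevant variables and drops out of the second derivative. This is precisely the prescription to replace the offending cotangent terms by $0$. The formula therefore holds on the interior of each of the finitely many regions cut out by the triangle-inequality hypersurfaces, with the understanding that the second derivative need not exist where the boundary of $\Lambda$ is crossed.
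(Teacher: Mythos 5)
Your proof is correct and follows essentially the same route as the paper: the paper delegates the computation to the single‑triangle function $f$ via equation~\eqref{eq:E_with_f} and Proposition~\ref{prop:hess_f}, whose proof differentiates the cosine rule to obtain precisely your identity $d\alpha=\cot\gamma\,(dx-dy)+\cot\beta\,(dx-dz)$ (equation~\eqref{eq:dalpha}) and then assembles the perfect squares triangle by triangle exactly as you do, so the only difference is that you bypass the intermediate function $f$ and differentiate the gradient formula directly. (One caveat: your bookkeeping actually yields the coefficient $w_{ij}$ per unoriented edge rather than the $\tfrac12 w_{ij}$ displayed in the proposition, so it does not literally ``reproduce the claimed cotan expression''; but the same factor of two comes out of the paper's own combination of \eqref{eq:E_with_f} with Proposition~\ref{prop:hess_f}, so this is a convention or typo in the stated formula rather than an error in your derivation.)
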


\begin{proof}
  This follows from equation~\eqref{eq:E_with_f} and
  Proposition~\ref{prop:hess_f}.
\end{proof}

\subsection{A peculiar triangle function}
\label{sec:function_f}

\begin{figure}
  \centering
  \includegraphics{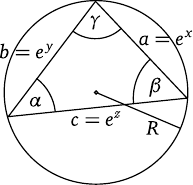}
  \caption{Triangle with sides $a=e^{x}$, $b=e^{y}$, $c=e^{z}$ and
    angles $\alpha$, $\beta$, $\gamma$. The radius of the circumcircle
    is
    $R=\frac{a}{2\sin\alpha}=\frac{b}{2\sin\beta}=\frac{c}{2\sin\gamma}$.}
  \label{fig:triangle_f_xyz}
\end{figure}
Consider the function 
\begin{equation}
  \label{eq:f}
  f(x,y,z) = \alpha x + \beta y + \gamma z + 
  \ML(\alpha) + \ML(\beta) + \ML(\gamma),
\end{equation}
where $\alpha$, $\beta$, and $\gamma$ are the angles in a euclidean
triangle with sides $a=e^{x}$, $b=e^{y}$, and $c=e^{z}$ as shown in
Figure~\ref{fig:triangle_f_xyz}. Such a triangle exists if and only if
the triangle inequalities are satisfied. So $f$ is (for now) only
defined on the set
\begin{equation}
  \label{eq:amoeba}
  \begin{split}
    \mathcal{A} = \big\{(x,y,z)\in\R^{3}\,\big|\,
    -e^{x}+e^{y}+e^{z}&>0,\\
    e^{x}-e^{y}+e^{z}&>0,\\ 
    e^{x}+e^{y}-e^{z}&>0\big\}.
  \end{split}
\end{equation}
Note that the function $f(x,y,z)$ is affine in the $(1,1,1)$-direction:
\begin{equation}
  \label{eq:f_scaling_behavior}
  f(x+h,y+h,z+h)=f(x,y,z) + \pi h.
\end{equation}
This equation remains valid after the extension of $f$ to $\R^{3}$
described in Proposition~\ref{prop:f_extend}. We will use it in
Appendix~\ref{sec:nec_cond_exist} to prove
Proposition~\ref{prop:scale_invariance}.

The function $f(x,y,z)$ is the fundamental building block of
$E_{\T,\Theta,\lambda}(u)$ since
\begin{equation}
\label{eq:E_with_f}
  E_{\T,\Theta,\lambda}(u) = \sum_{ijk\in T}
  \Big(
  2f\big(\tfrac{\tilde\lambda_{ij}}{2}, 
  \tfrac{\tilde\lambda_{jk}}{2}, 
  \tfrac{\tilde\lambda_{ki}}{2}\big)
  - \tfrac{\pi}{2}
  \big(\tilde\lambda_{ij} + \tilde\lambda_{jk} + \tilde\lambda_{ki}\big)
  \Big)
  + \sum_{i\in V}\Theta_i u_{i}\,,
\end{equation}
and Propositions~\ref{prop:grad_E}, \ref{prop:E_convex},
\ref{prop:E_extend}, and~\ref{prop:hess_E} follow from
corresponding statements regarding $f(x,y,z)$.

\begin{proposition}[First derivative]
\label{prop:grad_f}
The partial derivatives of $f$ are 
\begin{equation*}
  \frac{\partial f}{\partial x} = \alpha,\qquad
  \frac{\partial f}{\partial y} = \beta,\qquad
  \frac{\partial f}{\partial z} = \gamma.\qquad
\end{equation*}
\end{proposition}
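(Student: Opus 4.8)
The plan is to differentiate $f$ directly using the chain rule, then exploit the law of sines together with the angle-sum relation to show that all terms involving derivatives of the angles cancel, leaving only the coefficient $\alpha$.

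First, recall from the definition \eqref{eq:ML} that $\ML'(t) = -\log|2\sin t|$, and since all triangle angles lie in $(0,\pi)$ we have $\sin>0$, so $\ML'(\alpha)=-\log(2\sin\alpha)$, and likewise for $\beta$ and $\gamma$. Differentiating \eqref{eq:f} with respect to $x$, using $\partial_x(\alpha x)=\alpha+x\,\partial_x\alpha$ and that $y,z$ are independent of $x$, and collecting terms gives
\begin{equation*}
  \frac{\partial f}{\partial x}
  = \alpha
  + \bigl(x-\log(2\sin\alpha)\bigr)\frac{\partial\alpha}{\partial x}
  + \bigl(y-\log(2\sin\beta)\bigr)\frac{\partial\beta}{\partial x}
  + \bigl(z-\log(2\sin\gamma)\bigr)\frac{\partial\gamma}{\partial x}.
\end{equation*}
It remains to show that the last three terms sum to zero.

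The key observation is the law of sines: writing $R$ for the circumradius (see Figure~\ref{fig:triangle_f_xyz}), we have $e^{x}=a=2R\sin\alpha$, hence $x-\log(2\sin\alpha)=\log R$, and similarly $y-\log(2\sin\beta)=z-\log(2\sin\gamma)=\log R$. Thus all three coefficients equal the \emph{same} constant $\log R$, and the three terms factor as
\begin{equation*}
  \log R\left(\frac{\partial\alpha}{\partial x}
  +\frac{\partial\beta}{\partial x}
  +\frac{\partial\gamma}{\partial x}\right).
\end{equation*}
Since $\alpha+\beta+\gamma=\pi$ identically on $\mathcal A$, differentiating yields $\partial_x\alpha+\partial_x\beta+\partial_x\gamma=0$, so this contribution vanishes and $\partial f/\partial x=\alpha$. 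The formulas for $\partial f/\partial y$ and $\partial f/\partial z$ follow by the same argument, or by the symmetry of the construction under simultaneous permutation of $(x,y,z)$ and $(\alpha,\beta,\gamma)$.

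The only mild point to address is that $\alpha,\beta,\gamma$ are indeed smooth functions of $(x,y,z)$ on $\mathcal A$, so that the partial derivatives $\partial\alpha/\partial x$ exist; this follows from the cosine rule, which expresses each angle smoothly in terms of the positive side lengths wherever the triangle inequalities hold strictly. I expect no genuine obstacle here: the elegance of the argument is precisely that the law of sines makes all three coefficients equal to $\log R$, so that no individual angle-derivative need ever be computed and only the trivial relation $\alpha+\beta+\gamma=\pi$ is invoked.
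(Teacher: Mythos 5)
Your proof is correct and follows essentially the same route as the paper's: differentiate, observe via the law of sines that all three coefficients equal $\log R$, and invoke $\alpha+\beta+\gamma=\pi$ to make the angle-derivative terms cancel. The only addition is your explicit remark on the smoothness of the angles on $\mathcal A$, which the paper leaves implicit.
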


\begin{proof}
  Using $\ML'(x)=-\log|2\sin(x)|$ we obtain from~\eqref{eq:f} that
  \begin{multline*}
    \frac{\partial f}{\partial x} = \alpha 
    + \big(x - \log(2\sin\alpha)\big)\,\frac{\partial\alpha}{\partial x}\,
    + \big(y - \log(2\sin\beta)\big)\,\frac{\partial\beta}{\partial x}\,
    + \big(z - \log(2\sin\gamma)\big)\,\frac{\partial\gamma}{\partial x}\,.
  \end{multline*}
  Since 
  \begin{equation*}
    x - \log(2\sin\alpha) = 
    y - \log(2\sin\beta) = 
    z - \log(2\sin\gamma) = \log R,
  \end{equation*}
  where $R$ is the radius of the circumcircle, and since
  \begin{equation*}
    \frac{\partial\alpha}{\partial x} + 
    \frac{\partial\beta}{\partial x} + 
    \frac{\partial\gamma}{\partial x} = 0
  \end{equation*}
  (because $\alpha+\beta+\gamma=\pi$), this implies $\frac{\partial
    f}{\partial x} = \alpha$.
\end{proof}

\begin{remark}
  All closed one-forms of the form
  $\sum_{i=1}^{3}f(\alpha_{i})\,dg(\ell_{i})$, where $\ell_{1}$,
  $\ell_{2}$, $\ell_{3}$, and $\alpha_{1}$, $\alpha_{2}$, $\alpha_{3}$
  are the sides and angles of a (euclidean, hyperbolic, or spherical)
  triangle, have been classified by Luo~\cite{luo_rigidity_2010}, see
  also~\cite{dai_variational_2008}. In the euclidean case, they are
  the one-forms
  $w_{s}=\sum_{i=1}^{3}(\int^{\alpha_{i}}\sin^{s}t\,dt)\,d\ell_{i}/\ell_{i}^{s+1}$.
  Thus, the function $f$ is the integral of $w_{0}$.
\end{remark}

\begin{proposition}[Second derivative]
  \label{prop:hess_f}
  The second derivative of $f$ is
  \begin{multline}
    \label{eq:hess_f}
    \Big(
    \begin{smallmatrix}
      dx \\ dy \\ dz
    \end{smallmatrix}
    \Big)^{T}
    \left(
    \begin{smallmatrix}
      \frac{\partial^{2} f}{\partial x^{2}}       &
      \frac{\partial^{2} f}{\partial x\partial y} &
      \frac{\partial^{2} f}{\partial x\partial z} \\
      \frac{\partial^{2} f}{\partial y\partial x} &
      \frac{\partial^{2} f}{\partial y^{2}}       &
      \frac{\partial^{2} f}{\partial y\partial z} \\
      \frac{\partial^{2} f}{\partial z\partial x} &
      \frac{\partial^{2} f}{\partial z\partial y} &
      \frac{\partial^{2} f}{\partial z^{2}}
    \end{smallmatrix}
    \right)
    \Big(
    \begin{smallmatrix}
      dx\\dy\\dz
    \end{smallmatrix}
    \Big)
    =\cot\alpha\,(dy-dz)^{2}+\cot\beta\,(dz-dx)^{2}\\[-3ex]
    +\cot\gamma\,(dx-dy)^{2}
  \end{multline}
\end{proposition}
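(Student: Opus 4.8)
The plan is to reduce everything to differentiating the three angles and then to exploit the rigidity forced by scale invariance. By Proposition~\ref{prop:grad_f} we already know that $\partial f/\partial x=\alpha$, $\partial f/\partial y=\beta$, and $\partial f/\partial z=\gamma$, so the Hessian of $f$ is simply the Jacobian of the map $(x,y,z)\mapsto(\alpha,\beta,\gamma)$; as the Hessian of a smooth function it is automatically symmetric. Thus the $(i,j)$ entry is the derivative of the $i$-th angle with respect to the $j$-th logarithmic length, and it remains only to identify these derivatives.

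First I would record a structural constraint that makes the computation short. The angles of a euclidean triangle are unchanged if all three sides are scaled by a common factor, that is, under $(x,y,z)\mapsto(x+t,y+t,z+t)$. Differentiating in $t$ shows that each row of the Jacobian sums to zero, and by symmetry so does each column. A symmetric $3\times3$ matrix with vanishing row sums is a weighted graph Laplacian: its diagonal entries are determined by its off-diagonal ones, and the associated quadratic form is automatically $\sum w_{ij}(dx_i-dx_j)^2$ with $w_{ij}$ equal to the negative of the $(i,j)$ entry. Hence it suffices to compute the three off-diagonal entries, which by the cyclic symmetry $x\to y\to z\to x$, $\alpha\to\beta\to\gamma\to\alpha$ reduce to a single one.

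Next I would compute that single entry, say $\partial^2 f/\partial y\,\partial z=\partial\gamma/\partial y$. Starting from the law of cosines $\cos\gamma=(a^2+b^2-c^2)/(2ab)$ with $a=e^{x}$, $b=e^{y}$, $c=e^{z}$, implicit differentiation in $y$ expresses $-\sin\gamma\,\partial\gamma/\partial y$ as an elementary rational expression in $a,b,c$, which simplifies to $(b^2+c^2-a^2)/(2ab)$. The one genuinely non-mechanical step is to recognize this as a cotangent: using the area $A=\tfrac12 ab\sin\gamma$ and the cotangent rule $\cot\alpha=(b^2+c^2-a^2)/(4A)$ one obtains $\partial\gamma/\partial y=-\cot\alpha$. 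By the cyclic symmetry the remaining off-diagonal entries are $\partial\alpha/\partial z=-\cot\beta$ and $\partial\beta/\partial x=-\cot\gamma$; in each case the weight attached to a pair of edges is the cotangent of the angle opposite the third side.

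Assembling the Laplacian structure of the second step with the weights $\cot\alpha,\cot\beta,\cot\gamma$ of the third step gives
\[
  \cot\alpha\,(dy-dz)^2+\cot\beta\,(dz-dx)^2+\cot\gamma\,(dx-dy)^2,
\]
which is the claimed formula. The main obstacle is precisely the trigonometric identification in the third paragraph (equivalently, the appearance of the cotangent/area formula); once that single off-diagonal derivative is pinned down, Proposition~\ref{prop:grad_f} and scale invariance deliver the rest with no further computation.
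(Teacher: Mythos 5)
Your proof is correct. It shares its computational core with the paper's proof---both differentiate the law of cosines in the logarithmic variables and identify the resulting expression as a cotangent---but the surrounding organization is genuinely different. The paper computes the full differential of each angle, $d\alpha=\cot\gamma\,(dx-dy)+\cot\beta\,(dx-dz)$ and its cyclic analogues, and then obtains the quadratic form by substituting these into $d\alpha\,dx+d\beta\,dy+d\gamma\,dz$; in effect it determines all nine Hessian entries at once. You instead compute a single off-diagonal entry, $\partial\gamma/\partial y=-\cot\alpha$, and recover everything else from structure: symmetry of the Hessian, the cyclic symmetry of $f$ in the pairs $(x,\alpha),(y,\beta),(z,\gamma)$, and the vanishing row sums forced by the scale invariance $\alpha(x+t,y+t,z+t)=\alpha(x,y,z)$, which together make the form a weighted graph Laplacian $\sum w_{ij}(dx_i-dx_j)^2$. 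What your route buys is a conceptual explanation of \emph{why} the answer must have the Laplacian shape with kernel spanned by $(1,1,1)$ (the fact recorded separately in Proposition~\ref{prop:f_convex}), at the cost of invoking symmetry arguments; the paper's route is a single self-contained calculation that does not presuppose the shape of the answer. Your trigonometric identification via the area formula $\cot\alpha=(b^{2}+c^{2}-a^{2})/(4A)$ is equivalent to the paper's use of the cosine rule followed by the sine rule, and I have checked that the signs and the matching of weights to opposite angles come out right.
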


\begin{proof}
  By Proposition~\ref{prop:grad_f},
  \begin{equation*}
    \Big(
    \begin{smallmatrix}
      dx \\ dy \\ dz
    \end{smallmatrix}
    \Big)^{T}
    \left(
    \begin{smallmatrix}
      \frac{\partial^{2} f}{\partial x^{2}}       &
      \frac{\partial^{2} f}{\partial x\partial y} &
      \frac{\partial^{2} f}{\partial x\partial z} \\
      \frac{\partial^{2} f}{\partial y\partial x} &
      \frac{\partial^{2} f}{\partial y^{2}}       &
      \frac{\partial^{2} f}{\partial y\partial z} \\
      \frac{\partial^{2} f}{\partial z\partial x} &
      \frac{\partial^{2} f}{\partial z\partial y} &
      \frac{\partial^{2} f}{\partial z^{2}}
    \end{smallmatrix}
    \right)
    = \big( d\alpha \quad d\beta \quad d\gamma \big),
  \end{equation*}
  so the left-hand side of equation~\eqref{eq:hess_f} equals
  \begin{equation*}
    d\alpha\,dx+d\beta\,dy+d\gamma\,dz.  
  \end{equation*}
  We will show that
  \begin{equation}
    \label{eq:dalpha}
    d\alpha = \cot\gamma\,(dx - dy) + \cot\beta\,(dx - dz)\,.
  \end{equation}
  This and the analogous equations for $d\beta$ and $d\gamma$ imply
  \begin{equation*}
    d\alpha\,dx + d\beta\,dy + d\gamma\,dz =
    \cot\alpha\,(dy-dz)^{2} + \cot\beta\,(dz-dx)^{2} +
    \cot\gamma\,(dx-dy)^{2},
  \end{equation*}
  and hence equation~\eqref{eq:hess_f}.

  To derive equation~\eqref{eq:dalpha}, differentiate the cosine rule
  \begin{equation*}
    2bc\cos\alpha = b^{2} + c^{2} - a^{2}
  \end{equation*}
  to get
  \begin{equation*}
    -2bc\sin\alpha\,d\alpha + 2bc\cos\alpha\big(dy+dz)
    = 2b^{2}\,dy + 2c^{2}\,dz - 2a^{2}\,dx.
  \end{equation*}
  Apply the cosine rule three more times to get
  \begin{equation*}
    \begin{split}
      2bc\sin\alpha\,d\alpha 
      &= (b^{2}-c^{2}+a^{2})\,(dx-dy) +
      (-b^{2}+c^{2}+a^{2})\,(dx-dz) \\
      &=2ab\cos\gamma\,(dx-dy)+2ac\cos\beta\,(dx-dz)\,.
    \end{split}
  \end{equation*}
  Divide through by $2bc\sin\alpha$ and apply the sine rule to obtain
  equation~\eqref{eq:dalpha}.
\end{proof}

\begin{proposition}[Local convexity]
  \label{prop:f_convex}
  The function $f$ is locally convex, that is, the second
  derivative~\eqref{eq:hess_f} is a positive semidefinite quadratic
  form. Its kernel is one-dimensional and spanned by $(1,1,1)\in\R^{3}$.
\end{proposition}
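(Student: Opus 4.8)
The plan is to work directly with the explicit Hessian quadratic form supplied by Proposition~\ref{prop:hess_f},
\[
  Q(dx,dy,dz) = \cot\alpha\,(dy-dz)^{2}+\cot\beta\,(dz-dx)^{2}+\cot\gamma\,(dx-dy)^{2},
\]
and to show that it is positive semidefinite with kernel exactly $\R\cdot(1,1,1)$. That every constant vector lies in the kernel is immediate: at $(1,1,1)$ all three differences $dy-dz$, $dz-dx$, $dx-dy$ vanish, so $Q=0$. The real content is that $Q$ vanishes \emph{only} on constants and is otherwise strictly positive, and here one must exploit the constraint $\alpha+\beta+\gamma=\pi$, since the individual cotangents can be negative for obtuse triangles.

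First I would pass to the variables $p=dy-dz$, $q=dz-dx$, $r=dx-dy$, which satisfy the linear relation $p+q+r=0$ and together determine $(dx,dy,dz)$ up to addition of a constant vector. Eliminating $r=-(p+q)$ rewrites $Q$ as a binary quadratic form in $(p,q)$,
\[
  Q = (\cot\alpha+\cot\gamma)\,p^{2} + 2\cot\gamma\,pq + (\cot\beta+\cot\gamma)\,q^{2}.
\]
Because the linear map $(dx,dy,dz)\mapsto(p,q)$ has kernel precisely the constants, it suffices to prove that this binary form is positive definite: that yields $Q\geq 0$ with equality exactly when $p=q=0$, i.e.\ when $dx=dy=dz$, thereby giving both the semidefiniteness and the one-dimensional kernel in one stroke.

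Positive definiteness I would check via the two standard criteria. The leading coefficient is $\cot\alpha+\cot\gamma=\frac{\sin(\alpha+\gamma)}{\sin\alpha\sin\gamma}=\frac{\sin\beta}{\sin\alpha\sin\gamma}$, which is strictly positive since all three angles lie in $(0,\pi)$ and hence have positive sines. For the determinant, a short expansion gives
\[
  (\cot\alpha+\cot\gamma)(\cot\beta+\cot\gamma)-\cot^{2}\gamma
  = \cot\alpha\cot\beta+\cot\beta\cot\gamma+\cot\gamma\cot\alpha,
\]
and this equals $1$ by the classical triangle identity, which follows from $\alpha+\beta+\gamma=\pi$ together with the cotangent addition formula. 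Hence the discriminant condition reads $1>0$, and the form is positive definite.

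The only nontrivial ingredient, and the step I expect to carry the weight of the argument, is precisely the identity $\cot\alpha\cot\beta+\cot\beta\cot\gamma+\cot\gamma\cot\alpha=1$; everything else is elementary linear algebra and the positivity of sines. It is exactly here that the angle-sum relation $\alpha+\beta+\gamma=\pi$ does its work, preventing the possibly negative cotangents from destroying definiteness. Once the determinant is identified as $1$ and the leading coefficient as positive, positive definiteness of the binary form — and with it the proposition — is automatic.
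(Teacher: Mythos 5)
Your proof is correct and follows essentially the same route as the paper: both arguments quotient out the $(1,1,1)$ direction by passing to two difference coordinates and then verify positive definiteness of the resulting $2\times2$ form by checking that the leading coefficient is positive and the determinant equals $1$. The only cosmetic differences are your choice of coordinates $(dy-dz,\,dz-dx)$ versus the paper's $(dx-dz,\,dx-dy)$, and that you obtain $\det=1$ via the classical identity $\cot\alpha\cot\beta+\cot\beta\cot\gamma+\cot\gamma\cot\alpha=1$ where the paper performs an equivalent direct trigonometric simplification.
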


\begin{proof}
  Writing $(dy-dz)$ as $((dy-dx)-(dx-dz))$ we obtain
  \begin{multline*}
    \cot\alpha\,(dy-dz)^{2} + \cot\beta\,(dz-dx)^{2} +
    \cot\gamma\,(dx-dy)^{2} \\
    = (\cot\alpha+\cot\beta)(dx-dz)^{2}
    + (\cot\alpha+\cot\gamma)(dx-dy)^{2}\\
    - 2\cot\alpha(dx-dy)(dx-dz)
  \end{multline*}
  Thus, in terms of $(dx-dz)$ and $(dx-dy)$, the matrix of this
  quadratic form is 
  \begin{equation*}
    M = 
    \begin{pmatrix}
      \cot\alpha+\cot\beta & -\cot\alpha \\
      -\cot\alpha          & \cot\alpha+\cot\gamma
    \end{pmatrix}
  \end{equation*}
  We proceed as in~\cite[Section~2]{rivin_euclidean_1994}. Using
  $\alpha+\beta+\gamma=\pi$, we obtain
  \begin{equation*}
    M=
    \frac{1}{\sin\alpha\sin\beta\sin\gamma}
    \begin{pmatrix}
      \sin^{2}\gamma & -\cos\alpha\sin\beta\sin\gamma \\
      -\cos\alpha\sin\beta\sin\gamma & \sin^{2}\beta
    \end{pmatrix}
  \end{equation*}
  and $\det M=1$. Since $M_{11}>0$ and $\det M>0$, $M$ is positive
  definite. The claim about the second derivative of $f$ follows.
\end{proof}

\begin{proposition}[Extension]
  \label{prop:f_extend}
  Extend the definition of $f$ from $\mathcal A$ to $\R^{3}$ as follows.
  Define $f(x,y,z)$ by equation~\eqref{eq:f} for all $(x, y,
  z)\in\R^{3}$, where for $(x,y,z)\not\in\mathcal A$ the angles
  $\alpha$, $\beta$, and $\gamma$ are defined to be $\pi$ for the
  angle opposite the side that is too long and $0$ for the other
  two. The so extended function $f:\R^{3}\rightarrow\R$ is
  continuously differentiable and convex.
\end{proposition}

\begin{proof}
  The so-defined functions $\alpha$, $\beta$, $\gamma$ are continuous
  on $\R^{3}$. This implies the continuity of $f$ and, together with
  Proposition~\ref{prop:grad_f}, the continuity of its first
  derivative. Since $f$ is locally convex in $\mathcal A$
  (Proposition~\ref{prop:f_convex}) and linear outside, it is convex.
\end{proof}

Figure~\ref{fig:f_contour_plot} shows contour lines of the extended
function $f(x,y,z)$ in the plane $z=0$ and its graph.
\begin{figure}
  \hfill
  \includegraphics[width=0.3\textwidth]{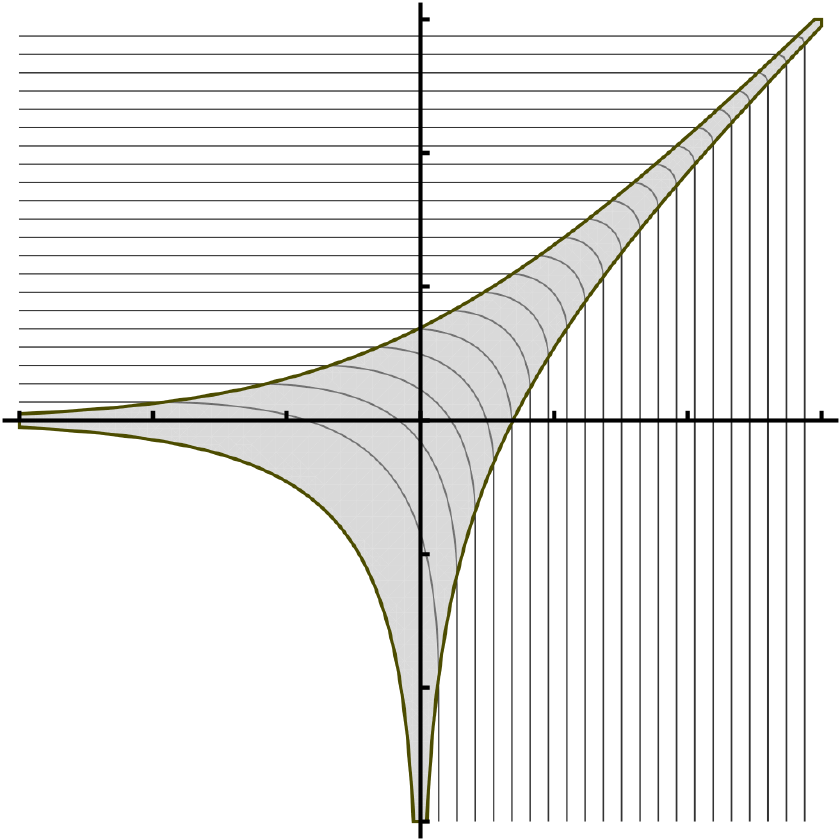}
  \hfill
  \includegraphics[width=0.4\textwidth]{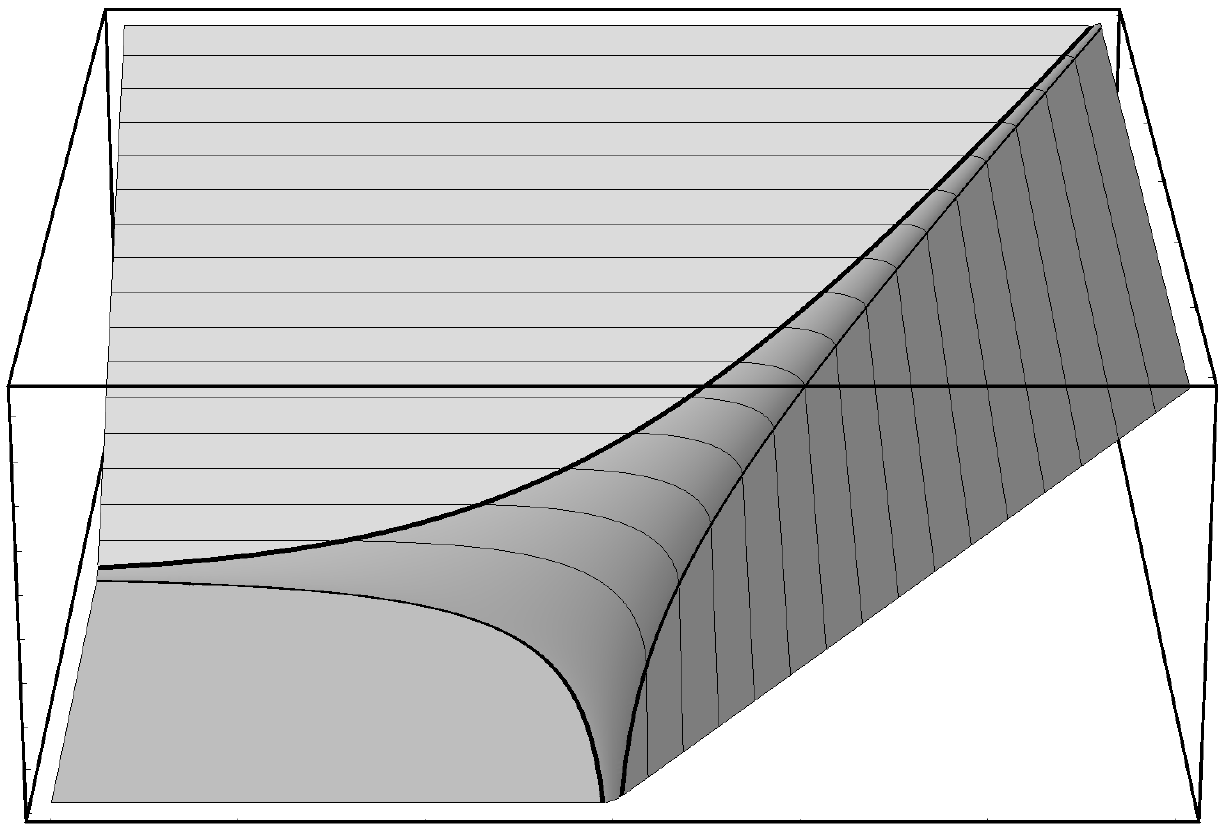}
  \hspace*{\fill}
  \caption{\emph{Left:} Contour plot of $(x,y)\mapsto f(x,y,0)$. The
    intersection of the domain $\mathcal A$ (see
    equation~\eqref{eq:amoeba}) with the $xy$-plane is
    shaded. \emph{Right:} Graph of the same function, also showing
    contour lines.}
  \label{fig:f_contour_plot}
\end{figure}
We will need the following estimate in the proof of
Proposition~\ref{prop:E_tends_to_infty}.

\begin{proposition}[Estimate]
  \label{prop:estimate}
  $f(x,y,z) \geq \pi\,\max\{x,y,z\}$
\end{proposition}

\begin{proof}
  The inequality follows from Proposition~\ref{prop:f_extend} and the
  following two observations. First, the condition of convexity,
  \begin{align*}
    f\big((1-t)p_{1}+tp_{2})\leq(1-t)f(p_{1})+tf(p_{2})\quad&\text{if}\quad 0\leq t\leq 1,\\
    \intertext{is equivalent to}
    f\big((1-t)p_{1}+tp_{2})\geq(1-t)f(p_{1})+tf(p_{2})\quad&\text{if}\quad t\leq 0\quad\text{or}\quad t\geq 1.
  \end{align*}
  Second, for fixed $y$ and $z$, if $x$ is greater than some constant,
  then $f(x,y,z)=\pi x$. 

  Together, they imply $f(x,y,z)\geq\pi x$. Equally,
  $f(x,y,z)\geq\pi y$ and $f(x,y,z)\geq\pi z$.
\end{proof}

\begin{remark}[Amoebas and Ronkin functions]
  In fact, $\mathcal A$ is an amoeba, the extended $f$ is a Ronkin
  function, and the convexity of $f$ that we have proved by elementary
  means follows also from a general theorem of Passare and
  Rullg{\aa}rd~\cite{passare_amoebas_2004}, which says that a Ronkin
  function is convex. Amoebas were introduced by
  Gelfand, Kapranov and
  Zelevinsky~\cite{gelfand_discriminants_1994}. The amoeba $\mathcal
  A_{p}$ of a complex polynomial $p(z_{1},\ldots,z_{n})$ with $n$
  indeterminates is defined as the domain in $\R^{n}$ that is the
  image of the set of zeros of $p$ under the map
  $(z_{1},\ldots,z_{n})\mapsto (\log|z_{1}|,\ldots,\log|z_{n}|)$. So
  the domain $\mathcal A$ defined by equation~\eqref{eq:amoeba} is the
  amoeba of the linear polynomial $z_{1}+z_{2}+z_{3}$. The Ronkin
  function of a polynomial $p$ is defined as the function
  $N_{p}:\R^{n}\rightarrow\R$,
  \begin{equation*}
    N_{p}(x_{1},\ldots,x_{n})=\frac{1}{(2\pi i)^{n}}
    \int_{S^{1}(e^{x_{1}})\times\cdots\times S^{1}(e^{x_{n}})}
    \log\big|p(z_{1},\ldots,z_{n})\big|\,
    \frac{dz_{1}}{z_{1}}\wedge\ldots\wedge\frac{dz_{n}}{z_{n}}\,,
  \end{equation*}
  where $S^{1}(r)$ is the circle in $\C$ around $0$ with radius
  $r$. As it turns out, 
  \begin{equation*}
    \label{eq:f_as_ronkin_function}
    f(x_{1},x_{2},x_{3}) = \pi\,N_{z_{1}+z_{2}+z_{3}}(x_{1},x_{2},x_{3}).
  \end{equation*}
  We will not spoil the reader's fun by presenting a proof here. The
  same Ronkin function also appears in the work of Kenyon, Okounkov,
  and Sheffield on the dimer
  model~\cite{kenyon_planar_2006}~\cite{kenyon_dimers_2006} as the
  Legendre dual of a ``surface tension'' in a variational principle
  governing the limit shape of random surfaces. (See in particular
  Kenyon's survey article on dimers~\cite{kenyon09:_lectur} and
  Mikhalkin's survey article on
  amoebas~\cite{mikhalkin_amoebas_2004}.) Whether or how this is
  related to the variational principles discussed in this paper is
  unclear.
\end{remark}

\subsection{The second variational principle}
\label{sec:variational_principle_2}

The second variational principle has angles as variables. It is based
on the two elementary observations that, first, the sine theorem lets
us express the length-cross-ratios in terms of angles,
\begin{equation}
  \label{eq:lcr_with_angles}
  \lcr_{ij}=
  \frac{\sin(\alpha_{il}^{j})\sin(\alpha_{jk}^{i})}
  {\sin(\alpha_{lj}^{i})\sin(\alpha_{ki}^{j})}
\end{equation}
(see Figure~\ref{fig:lcr_with_angles}),
\begin{figure}
  \centering
  \includegraphics{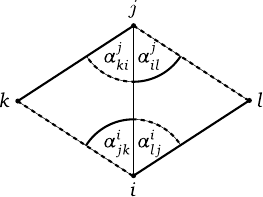}
  \caption{The sine theorem lets us express the length-cross-ratios
    (see Figure~\ref{fig:lcr}) in
    terms angles.}
  \label{fig:lcr_with_angles}
\end{figure}
and that, second, if we know the angles in a euclidean triangulation,
then we can (again using the sine theorem) reconstruct the lengths up
to a global scale factor.

For a triangulated surface $\T$ and $\lambda\in\R^{E}$, define
\begin{gather}
  \notag
  S_{\T,\lambda}:\R^{A}\longrightarrow\R,\\
  \label{eq:widehat_S}
  S_{\T,\lambda}(\alpha)=\sum_{ijk\in T}
  \Big(2V(\alpha_{ij}^{k},\alpha_{jk}^{i},\alpha_{ki}^{j})
  +\alpha_{ij}^{k}\lambda_{ij}
  +\alpha_{jk}^{i}\lambda_{jk}
  +\alpha_{ki}^{j}\lambda_{ki}
  \Big),
\end{gather}
where
\begin{equation}
  \label{eq:V}
  V(\alpha,\beta,\gamma)=
  \ML(\alpha)+\ML(\beta)+\ML(\gamma).
\end{equation}

\begin{remark}
  The function $S_{\T,\lambda}$ is (up to an irrelevant additive
  constant) equal to Rivin's function $\mathcal{V}_{S}$ defined
  in~\cite[Section~7]{rivin_euclidean_1994}. But the variational
  principles considered here
  (Propositions~\ref{prop:second_variational_1}
  and~\ref{prop:second_variational_2}) are different. Rivin's
  variational principle has an additional constraint: Only such
  variations are allowed that fix, for each edge, the sum of opposite
  angles.
\end{remark}

\begin{proposition}[Rivin~\cite{rivin_euclidean_1994}]
  The function $V$ is strictly concave on the domain 
  \begin{equation*}
    \big\{(\alpha,\beta,\gamma)\in(\R_{>0})^{3}
    \,\big|\,\alpha+\beta+\gamma=\pi\big\}.
  \end{equation*}
  So $S_{\T,\lambda}$ is also strictly concave on the domain of
  positive angle assignments that sum to $\pi$ in each triangle.
\end{proposition}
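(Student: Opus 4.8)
The plan is to reduce the concavity of $S_{\T,\lambda}$ to the concavity of the single-triangle function $V$, since $S_{\T,\lambda}$ is (up to linear terms in $\alpha$, which do not affect the Hessian) a sum of copies of $V$, one per triangle, each depending on disjoint sets of variables. Adding the linear term $\alpha_{ij}^{k}\lambda_{ij}+\alpha_{jk}^{i}\lambda_{jk}+\alpha_{ki}^{j}\lambda_{ki}$ does not change convexity properties, and a sum of strictly concave functions on independent variables is strictly concave on the product domain (intersected with the affine constraints that each triangle's angles sum to $\pi$). So the whole statement follows once we establish that $V$ is strictly concave on the open triangle $\{\alpha+\beta+\gamma=\pi,\ \alpha,\beta,\gamma>0\}$.

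To prove strict concavity of $V$, first I would use the constraint $\alpha+\beta+\gamma=\pi$ to pass to two independent variables, say $\alpha$ and $\beta$, with $\gamma=\pi-\alpha-\beta$. Then $V=\ML(\alpha)+\ML(\beta)+\ML(\pi-\alpha-\beta)$, and since $\ML$ is $\pi$-periodic and odd, $\ML(\pi-\alpha-\beta)=-\ML(\alpha+\beta)$ up to the periodicity, but more directly I would simply compute the Hessian. Recall from the definition~\eqref{eq:ML} that $\ML'(x)=-\log|2\sin x|$, hence $\ML''(x)=-\cot x$. Differentiating $V$ in the two free variables gives the Hessian
\begin{equation*}
  \hess V =
  \begin{pmatrix}
    \ML''(\alpha)+\ML''(\gamma) & \ML''(\gamma)\\
    \ML''(\gamma) & \ML''(\beta)+\ML''(\gamma)
  \end{pmatrix}
  =
  -\begin{pmatrix}
    \cot\alpha+\cot\gamma & \cot\gamma\\
    \cot\gamma & \cot\beta+\cot\gamma
  \end{pmatrix}.
\end{equation*}
To conclude strict concavity it suffices to show this matrix is negative definite, equivalently that
\begin{equation*}
  N=\begin{pmatrix}
    \cot\alpha+\cot\gamma & \cot\gamma\\
    \cot\gamma & \cot\beta+\cot\gamma
  \end{pmatrix}
\end{equation*}
is positive definite.

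The positive definiteness of $N$ is exactly the kind of computation already carried out in the proof of Proposition~\ref{prop:f_convex}: using $\alpha+\beta+\gamma=\pi$ one finds $\det N=\cot\alpha\cot\beta+\cot\beta\cot\gamma+\cot\gamma\cot\alpha=1$, and the diagonal entry $\cot\alpha+\cot\gamma=\frac{\sin(\alpha+\gamma)}{\sin\alpha\sin\gamma}=\frac{\sin\beta}{\sin\alpha\sin\gamma}>0$ for angles strictly between $0$ and $\pi$. By the leading-principal-minor criterion, $N$ is positive definite, so $\hess V$ is negative definite and $V$ is strictly concave. The main obstacle is a minor one of bookkeeping rather than depth: one must be careful that the individual cotangents can be negative (when an angle is obtuse), so positive definiteness cannot be read off entrywise and genuinely relies on the identity $\det N=1$ together with the sign of a single diagonal entry. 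Once $V$ is handled, the assembly of $S_{\T,\lambda}$ as a concave sum over triangles with disjoint variable blocks, subject to the per-triangle affine constraints, is immediate, and the linear terms in $\lambda$ are harmless.
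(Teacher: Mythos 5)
Your argument is correct. Note that the paper itself offers no proof of this proposition --- it is attributed to Rivin and left as a citation --- so you have actually supplied a proof where the paper gives none. Your computation is sound at every step: the reduction of $S_{\T,\lambda}$ to one copy of $V$ per triangle is legitimate because the angle variables of distinct triangles are disjoint and the $\lambda$-terms are linear; the second derivative $\ML''(x)=-\cot x$ is right; the Hessian in the two free variables after eliminating $\gamma=\pi-\alpha-\beta$ is $-N$ with $N$ as you wrote it; and the identity $\cot\alpha\cot\beta+\cot\beta\cot\gamma+\cot\gamma\cot\alpha=1$ for $\alpha+\beta+\gamma=\pi$ gives $\det N=1$, which together with $N_{11}=\sin\beta/(\sin\alpha\sin\gamma)>0$ yields positive definiteness by Sylvester's criterion. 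You correctly flag that one cannot argue entrywise since individual cotangents may be negative. Your computation is in effect the Legendre-dual twin of the paper's own proof of Proposition~\ref{prop:f_convex}: there the authors prove positive semidefiniteness of the quadratic form $\cot\alpha\,(dy-dz)^{2}+\cot\beta\,(dz-dx)^{2}+\cot\gamma\,(dx-dy)^{2}$ by exhibiting a $2\times2$ matrix $M$ with $\det M=1$ and a positive diagonal entry, using exactly the same trigonometric identity; the only difference is that there the variables are logarithmic side lengths and here they are the angles themselves, with the affine constraint $\alpha+\beta+\gamma=\pi$ replacing the scaling kernel $(1,1,1)$. In short, the proposal is correct and self-contained, and it fills in a proof the paper delegates to the literature.
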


\begin{proposition}[Second variational principle I]
  \label{prop:second_variational_1}
  Let $\cclass$ be a discrete conformal class on $\T$ with
  representative $\ell=e^{\lambda/2}\in\R^{E}$, let $\Theta\in\R^{V}$,
  and define the subset $C_{\Theta}\subseteq\R^{A}$ by
  \begin{equation}
    \begin{split}
      C_{\Theta} = \Big\{\alpha\in\R^{A}\,\Big|\, \alpha>0,\quad%
      &\text{for all $ijk\in T$: }%
      \alpha_{jk}^{i}+\alpha_{ki}^{j}+\alpha_{ij}^{k}=\pi,\\
      &\text{for all $i\in V$: }%
      \sum_{jk:ijk\in T}\alpha_{jk}^{i}=\Theta_{i}
      \Big\}\,.
    \end{split}
  \end{equation}
  Then $\tilde\alpha\in C_{\Theta}$ is the angle function of a solution
  $\tilde\ell=e^{\tilde\lambda/2}$ of
  \emph{Problem~\ref{prob:prescribe_Theta}} if and only if
  $S_{\T,\lambda}(\tilde\alpha)$ is the maximum of the restriction
  $S_{\T,\lambda}\big|_{C_{\Theta}}$\,.
\end{proposition}

\begin{proof}
  Consider the graph $\Gamma$ that is obtained by choosing one point
  in each triangle of $\T$ and connecting it to the vertices of the
  triangle (see Figure~\ref{fig:graph_gamma}, left). The vertex set
  $V_{\Gamma}$ is in one-to-one correspondence with $V_{\T}\cup
  T_{\T}$, and the edge set $E_{\Gamma}$ is in one-to-one
  correspondence with the set of angles $A_{\T}$.
  \begin{figure}
    \hfill%
    \includegraphics{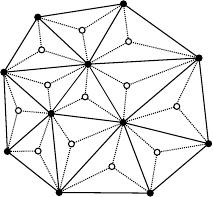}
    \hfill%
    \includegraphics{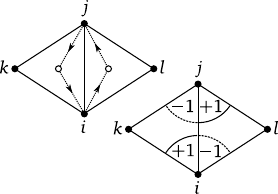}
    \hspace*{\fill}
    \caption{\emph{Left:} Triangulation $\T$ (solid) and the
      corresponding graph $\Gamma$ (dotted). \emph{Right:} The cycle
      of edges of $\Gamma$ (dashed) that corresponds to an interior
      edge $ij\in E_{\T}$, and the corresponding tangent vector
      $v_{ij}\in\R^{A}$ to $C_{\Theta}$.}
    \label{fig:graph_gamma}
  \end{figure}
  The tangent space to $C_{\Theta}\subseteq\R^{A}$, which consists of
  those vectors in $\R^{A}$ that sum to $0$ in each triangle and
  around each vertex, is in one-to-one correspondence with the space
  of closed edge chains of $\Gamma$ with coefficients in~$\R$.
  
  First, assume that $\tilde\alpha$ is a critical point of
  $S_{\T,\lambda}|_{C}$. Suppose $ij \in E_{\T}$ is an interior edge
  and consider the cycle of edges of $\Gamma$ shown on the right in
  Figure~\ref{fig:graph_gamma}. The corresponding tangent vector to
  $C_{\Theta}$ in $\R^{A}$ is
  \begin{equation}
    \label{eq:zero_homological_tangent_vector}
    \frac{\partial}{\partial\alpha_{il}^{j}}
    -\frac{\partial}{\partial\alpha_{lj}^{i}}
    +\frac{\partial}{\partial\alpha_{jk}^{i}}
    -\frac{\partial}{\partial\alpha_{ki}^{j}}\,,
  \end{equation}
  and
  \begin{multline*}
    \Big(
    \tfrac{\partial}{\partial\alpha_{il}^{j}}
    -\tfrac{\partial}{\partial\alpha_{lj}^{i}}
    +\tfrac{\partial}{\partial\alpha_{jk}^{i}}
    -\tfrac{\partial}{\partial\alpha_{ki}^{j}}
    \Big)
    S_{\T,\lambda}(\tilde\alpha) =
    -2\log
    \Bigg(\frac{\sin(\tilde\alpha_{il}^{j})\sin(\tilde\alpha_{jk}^{i})}
    {\sin(\tilde\alpha_{lj}^{i})\sin(\tilde\alpha_{ki}^{j})}\Bigg)\\
    + \lambda_{il}-\lambda_{lj}+\lambda_{jk}-\lambda_{ki}.
  \end{multline*}
  Provided that $\tilde\alpha$ is in fact the system of angles of a
  discrete metric $\tilde\ell=e^{\tilde\lambda/2}$, this implies that
  $\tilde\ell$ and $\ell$ are discretely conformally equivalent. It
  remains to show that $\tilde\alpha$ is indeed the system of angles
  of a discrete metric. Construct such a metric as follows: Pick one
  edge $ij\in E_{\T}$ and choose an arbitrary value for
  $\tilde\lambda_{ij}$. To define $\tilde\lambda_{lk}$ for any other
  edge $lk\in E_{\T}$, connect it to $ij$ by an edge-connected
  sequence of triangles as shown in Figure~\ref{fig:triangle_chain},
  \begin{figure}
    \centering
    \includegraphics[scale=1]{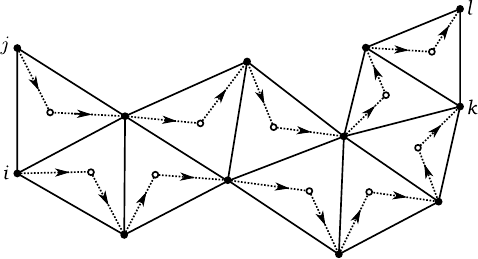}
    \caption{Edge-connected chain of triangles from edge $ij$ to edge
      $kl$ of $\T$. The corresponding chain $\gamma$ of $\Gamma$-edges
      (dotted) consists of the edges of $\Gamma$ opposite the initial
      edge $ij$, the terminal edge $kl$, and the intermediate
      connecting edges of the triangle chain.}
    \label{fig:triangle_chain}
  \end{figure}
  let $\gamma$ be the chain of edges of the graph $\Gamma$ as
  indicated in the figure. Let $w_{\gamma}$ be the corresponding
  vector in $\R^{A}$ and define
  \begin{equation*}
    \tilde\lambda_{kl}=\tilde\lambda_{ij}
    + dS_{\T,\lambda}
    \raisebox{-.6ex}{$\Big|_{\mathrlap{\tilde\alpha}}$}(w_{\gamma})
    + \lambda_{kl} - \lambda_{ij}.
  \end{equation*}
  The value of $\tilde\lambda_{kl}$ obtained in this way is
  independent of the choice of triangle chain: Another triangle chain
  connecting $ij$ to $kl$ leads to an edge-chain $\gamma'$ such that
  $\gamma'-\gamma$ is a closed edge-chain so that
  $w_{\gamma'}-w_{\gamma}\in\R^{A}$ is tangent to
  $C_{\Theta}$. Further, $\tilde\ell=e^{\tilde\lambda/2}$ is a
  discrete metric with angles~$\tilde\alpha$: If $ij$ and $kl$ belong
  to the same triangle (that is, if the triangle chain consists of
  only one triangle) then this follows from the sine rule. The general
  case follows by induction over the length of $\gamma$. So
  $\tilde\ell$ is a solution of Problem~\ref{prob:prescribe_Theta}.

  The converse implication (solution of
  Problem~\ref{prob:prescribe_Theta} implies critical point) follows
  from the fact that the cycle space of $\Gamma$ is spanned by the
  cycles corresponding to interior edges of $\T$ as shown in
  Figure~\ref{fig:graph_gamma} (right) together with the cycles in
  $\Gamma$ corresponding to edge-connected triangle sequences as shown
  in Figure~\ref{fig:triangle_chain} but closed.
\end{proof}

Solutions to the more general Problem~\ref{prob:general} (with
$u|_{V_{0}} = 0$) are also in one-to-one correspondence with critical
points of~$S_{\T,\lambda}$. The only difference is that the
angle sums are not constrained for vertices in $V_{0}$:

\begin{proposition}[Second variational principle II]
  \label{prop:second_variational_2}
  Let $\cclass$ be a discrete conformal class on $\T$ with
  representative $\ell=e^{\lambda/2}\in\R^{E}$, let $V=V_{0}\dot
  \cup V_{1}$ be a partition of $V$, let $\Theta\in\R^{V_{1}}$, and
  define $C_{\Theta}'\subseteq\R^{A}$ by
  \begin{equation}
    \begin{split}
      C_{\Theta}' = \Big\{\alpha\in\R^{A}\,\Big|\, \alpha>0,\quad%
      &\text{for all $ijk\in T$: }%
      \alpha_{jk}^{i}+\alpha_{ki}^{j}+\alpha_{ij}^{k}=\pi,\\
      &\text{for all $i\in V_{1}$: }%
      \sum_{jk:ijk\in T}\alpha_{jk}^{i}=\Theta_{i}
      \Big\}\,.
    \end{split}
  \end{equation}
  Then $\tilde\alpha\in C_{\Theta}'$ is the angle function of a solution of
  \emph{Problem~\ref{prob:general}} with fixed $u|_{V_{0}} = 0$ if and
  only if $S_{\T,\lambda}(\tilde\alpha)$ is the maximum of the
  restriction $S_{\T,\lambda}\big|_{C_{\Theta}'}$\,.
\end{proposition}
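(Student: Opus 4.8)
The plan is to run the proof of Proposition~\ref{prop:second_variational_1} almost verbatim, changing only the bookkeeping of constraints, and to isolate the single new phenomenon produced by dropping the angle-sum constraints at the vertices of $V_0$. First I would reduce ``maximum'' to ``critical point'': by the preceding concavity result $S_{\T,\lambda}$ is strictly concave on the set of positive angle functions that sum to $\pi$ in each triangle, and $C_\Theta'$ is the intersection of this set with an affine subspace of $\R^A$. Hence $S_{\T,\lambda}|_{C_\Theta'}$ is strictly concave, a point $\tilde\alpha\in C_\Theta'$ maximizes it if and only if it is a critical point of the restriction, and the maximizer is unique. It therefore suffices to identify critical points of $S_{\T,\lambda}|_{C_\Theta'}$ with angle functions of solutions of Problem~\ref{prob:general} having $u|_{V_0}=0$.

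Next I would describe the tangent space of $C_\Theta'$ in terms of the graph $\Gamma$ from the proof of Proposition~\ref{prop:second_variational_1}. It consists of those vectors in $\R^A$ that sum to zero in every triangle and around every vertex of $V_1$, which under $E_\Gamma\leftrightarrow A$ is precisely the space of $1$-chains of $\Gamma$ whose boundary is carried by the $\Gamma$-vertices corresponding to $V_0$. This space contains the cycle space $Z_1(\Gamma)$ — the tangent space to $C_\Theta$ used before — and is spanned by $Z_1(\Gamma)$ together with edge-paths of $\Gamma$ joining two vertices of $V_0$. The critical-point equation $dS_{\T,\lambda}|_{\tilde\alpha}=0$ on this tangent space thus splits into a condition on cycles and a condition on the extra $V_0$-paths.

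On the cycles the argument is unchanged. Vanishing of $dS_{\T,\lambda}$ on the interior-edge cycles of Figure~\ref{fig:graph_gamma} matches the length-cross-ratios of $\tilde\alpha$ with those of $\ell$, and vanishing on the closed triangle-chains of Figure~\ref{fig:triangle_chain} makes the reconstructed $\tilde\lambda$ (well defined up to one global additive constant) a discrete metric whose angle function is $\tilde\alpha$; with $\tilde\alpha\in C_\Theta'$ this yields $\tilde\ell=e^{\tilde\lambda/2}$ discretely conformally equivalent to $\ell$, with angle sum $\Theta_i$ at each $i\in V_1$, and by Section~\ref{sec:two_triv_examples} its scale factors $u$ are determined up to the same global constant. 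The one new step is to interpret vanishing of $dS_{\T,\lambda}$ on the extra paths. I would evaluate $dS_{\T,\lambda}|_{\tilde\alpha}$ along an edge-path of $\Gamma$ from one $V_0$-vertex to another using $\partial S_{\T,\lambda}/\partial\alpha_{jk}^i=-\log(2\sin\tilde\alpha_{jk}^i)+\lambda_{jk}$ and the sine rule in each traversed triangle; as in the reconstruction step the intermediate contributions telescope and the expression collapses to the difference of the scale factors at the two endpoints. Requiring this to vanish for all such paths is therefore equivalent to $u$ being constant on $V_0$, and the residual scaling $u\mapsto u+c\,1_{V}$ (which does not change $\tilde\alpha$) normalizes this constant to $0$. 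This exhibits $\tilde\alpha$ as the angle function of a solution of Problem~\ref{prob:general} with $u|_{V_0}=0$; the converse follows by reading the same identities backwards, since for such a solution $dS_{\T,\lambda}$ vanishes on the cycles (as in Proposition~\ref{prop:second_variational_1}) and on the extra paths (all endpoint differences being zero).

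The main technical point I expect is exactly this telescoping along the $V_0$-paths: one must fix coherent orientations of the $\Gamma$-edges and track carefully which edge length is paired with each angle, so that the sine-rule cancellations leave only the boundary terms and no residual dependence on the intermediate triangles. The path-independence that already underlies the reconstruction in Proposition~\ref{prop:second_variational_1} guarantees that these paths contribute solely through their endpoints in $V_0$, so once the sign conventions are pinned down the remaining verification is routine.
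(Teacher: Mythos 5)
The paper omits the proof of this proposition, saying only that it needs no ideas beyond those in the proof of Proposition~\ref{prop:second_variational_1}, and your proposal is a correct realization of exactly that plan: the tangent space of $C_{\Theta}'$ is enlarged from the cycle space of $\Gamma$ by edge-paths joining vertices of $V_{0}$, and evaluating $dS_{\T,\lambda}$ along such a path telescopes via the sine rule (just as in the reconstruction step) to the difference of the scale factors at the endpoints, so that criticality forces $u$ to be constant on $V_{0}$, which the global scaling freedom normalizes to zero. Your reduction of ``maximum'' to ``critical point'' via the strict concavity of $S_{\T,\lambda}$ on the angle constraint set, and the converse direction by reading the identities backwards, are likewise the intended arguments, so the proposal matches the paper's implicit proof.
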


We omit the proof because no essential new ideas are necessary beyond
those used in the proof of
Proposition~\ref{prop:second_variational_1}.

\begin{remark}
  It also makes sense to consider critical points of $S_{\T,\lambda}$
  under variations of the type shown in Figure~\ref{fig:graph_gamma}
  alone, disallowing variations corresponding to homologically
  non-trivial cycles in $\Gamma$. These correspond to discretely
  conformally equivalent \emph{similarity structures}, that is,
  ``metrics'' which may have global scaling holonomy.
\end{remark}

\section{The other side of the theory: Interpretation in terms of
  hyperbolic geometry}
\label{sec:ideal_polyhedra}

\subsection{Hyperbolic structure on a euclidean triangulation}
\label{sec:hyp_struct_on_euc_triang}

This section deals with the inverse of a construction of
Penner~\cite{penner_decorated_1987} \cite{epstein_euclidean_1988},
which equips a hyperbolic manifold with cusps with a piecewise
euclidean metric. Here, we construct a natural hyperbolic metric with
cusps on any euclidean triangulation.

Consider a euclidean triangle with its circumcircle. If we interpret
the interior of the circumcircle as a hyperbolic plane in the Klein
model, then the euclidean triangle becomes an ideal hyperbolic
triangle, that is, a hyperbolic triangle with vertices at
infinity. This construction equips any euclidean triangle (minus its
vertices) with a hyperbolic metric. If it is performed on all
triangles of a euclidean triangulation $(\T,\ell)$, then the
hyperbolic metrics induced on the individual triangles fit together so
$\T\setminus V$ is equipped with a hyperbolic metric with cusps at the
vertices. Thus, $\T$ becomes an ideal triangulation of a hyperbolic
surface with cusps.

\begin{remark}
  We will see in Section~\ref{sec:penner_and_shear} that
  $\lambda_{ij}$ and $\log\lcr_{ij}$ are Penner coordinates and shear
  coordinates for this hyperbolic surfaces. It follows that the above
  construction yields the same surface as a construction described (in
  terms of length-cross-ratios) by
  Rivin~\cite[Section~7]{rivin_combinatorial_2003}.
\end{remark}

\begin{theorem}
  \label{thm:hyperbolic_isometry}
  Two euclidean triangulations $(\T,\ell)$ and $(\T,\tilde\ell)$ with
  the same combinatorics are discretely conformally equivalent if and
  only if the hyperbolic metrics with cusps induced by the
  circumcircles are isometric. Discrete conformal maps are
  isometries with respect to the induced hyperbolic metrics.
\end{theorem}

\begin{proof}
  This follows immediately from Theorem~\ref{thm:dconfmap}
  (Section~\ref{sec:discrete_conformal_maps}), because the projective
  circumcircle preserving maps between triangles are precisely the
  hyperbolic isometries.
\end{proof}

\begin{remark}
  Each discrete conformal structure on $\T$ corresponds therefore to a
  point in the classical Teichm\"uller space $\mathcal T_{g,n}$ of a
  punctured surface. This explains the dimensional agreement observed
  in Remark~\ref{rem:dim_discrete_Teich}.
\end{remark}
Theorem~\ref{thm:hyperbolic_isometry} also suggests a way to extend
the concepts of discrete conformal equivalence and discrete conformal
maps to triangulations which are not combinatorially equivalent:

\begin{definition}
  Two euclidean triangulations $(\T,\ell)$ and $(\tilde\T,\tilde\ell)$,
  which need not be combinatorially equivalent, are \emph{discretely
    conformally equivalent} if they are isometric with respect to the
  induced hyperbolic metrics with cusps. The corresponding isometries
  are called \emph{discrete conformal maps}.
\end{definition}

\subsection{Decorated ideal triangles and tetrahedra}
\label{sec:decorated_ideal_triangs_and_tets}

In this section we review some basic facts about ideal triangles and
tetrahedra that will be needed in subsequent sections.

All ideal hyperbolic triangles are congruent with respect to the group
of hyperbolic isometries. A \emph{decorated ideal triangle} is an
ideal hyperbolic triangle together with a choice of horocycles, one
centered at each vertex (see Figure~\ref{fig:ideal_triangle}).
\begin{figure}
  \centering
  \raisebox{7mm}{\includegraphics{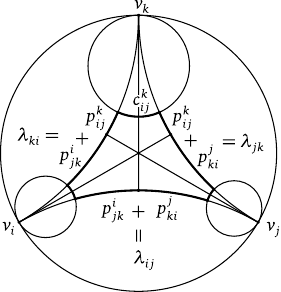}}
  \includegraphics{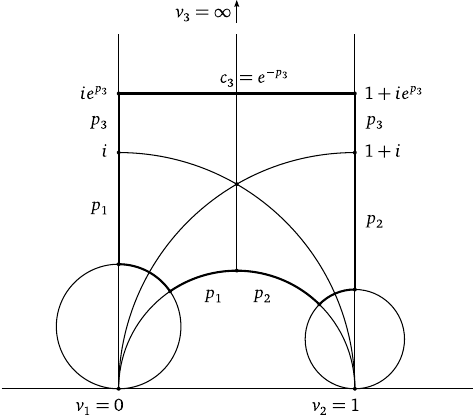}
  \caption{Decorated ideal triangle in the Poincar\'e disk model
    \emph{(left)} and in the half-plane model \emph{(right)}.}
  \label{fig:ideal_triangle}
\end{figure}
We denote by $\lambda_{ij}$ the signed distance between the horocycles
at vertices $i$ and $j$ as measured along the edge $ij$ and taken
negatively if the horocycles intersect. Any triple of real numbers
$(\lambda_{ij}, \lambda_{jk}, \lambda_{ki})\in\R^{3}$ corresponds to
one and only one choice of
horocycles. Figure~\ref{fig:ideal_triangle} shows also the lines
of symmetry of the ideal triangle. (They are its heights as well.) We
denote the signed distances from their base points to the horocycles
by $p_{ij}^{k}$ as shown. Clearly,
\begin{equation*}
    \lambda_{ij}= p_{jk}^{i}+p_{ki}^{j},\qquad
    \lambda_{jk}= p_{ki}^{j}+p_{ij}^{k},\qquad
    \lambda_{ki}= p_{ij}^{k}+p_{jk}^{i},   
\end{equation*}
so
\begin{equation}
  \label{eq:pijk}
  \begin{split}
    p_{ij}^{k}&=\tfrac{1}{2}(-\lambda_{ij}+\lambda_{jk}+\lambda_{ki}),\\
    p_{jk}^{i}&=\tfrac{1}{2}(\lambda_{ij}-\lambda_{jk}+\lambda_{ki}),\\
    p_{ki}^{j}&=\tfrac{1}{2}(\lambda_{ij}+\lambda_{jk}-\lambda_{ki}).
  \end{split}
\end{equation}

\begin{lemma}[Penner~\cite{penner_decorated_1987}]
  \label{lem:c_length}
  The length $c_{ij}^{k}$ of the arc of the horocycle centered at
  $v_{k}$ that is contained in an ideal triangle $v_{i}v_{j}v_{k}$
  as shown in Figure~\ref{fig:ideal_triangle} (left) is
  \begin{equation*}
    c_{ij}^{k}
    =e^{-p_{ij}^{k}}
    =e^{\frac{1}{2}(\lambda_{ij}-\lambda_{jk}-\lambda_{ki})}.
  \end{equation*}
\end{lemma}

\begin{proof}
  See Figure~\ref{fig:ideal_triangle} (right),
  which shows the ideal triangle in the half-plane model. Recall that
  in the half-plane model, the hyperbolic plane is represented by
  $\{z\in\C|\im z > 0\}$ with metric $ds=\tfrac{1}{\im z}|dz|$.
\end{proof}

\begin{remark}
  Together with Proposition~\ref{prop:penner_and_shear} of the next
  section, this provides a geometric interpretation for the auxiliary
  parameters $c^{i}_{jk}$ introduced in
  Section~\ref{sec:product_of_length_cross_ratios_around_a_vertex}.
\end{remark}

Not all ideal tetrahedra are isometric. There is a complex
$1$-parameter family of them, the parameter being the complex
cross-ratio of the vertices in the infinite boundary of hyperbolic
$3$-space. A \emph{decorated ideal tetrahedron} is an ideal hyperbolic
tetrahedron together with a choice of horospheres centered at the
vertices. Figure~\ref{fig:ideal_tet}
\begin{figure}
  \centering
  \includegraphics{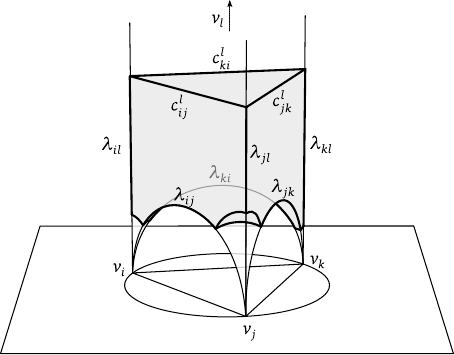}
  \caption{Decorated ideal tetrahedron in the half-space model.}
  \label{fig:ideal_tet}
\end{figure}
shows a decorated ideal tetrahedron, truncated at its horospheres, in
the half-space model. Again, we denote the signed distances between
the horospheres by $\lambda_{ij}$.

The intrinsic geometry of a horosphere in hyperbolic space is
euclidean. So the intersection of the tetrahedron with the horosphere
centered at, say, $v_{l}$ is a euclidean triangle with side lengths
$c_{ij}^{l}$, $c_{jk}^{l}$, $c_{ki}^{l}$ determined by
Lemma~\ref{lem:c_length}. One easily deduces the following lemma.

\begin{lemma}
  \label{lem:cij_triangle_ineq}
  Six real numbers $\lambda_{ij}$, $\lambda_{jk}$, $\lambda_{ki}$,
  $\lambda_{il}$, $\lambda_{jl}$, $\lambda_{kl}$ are the signed
  distances between horospheres of a decorated ideal tetrahedron as
  shown in Figure~\ref{fig:ideal_tet} (which is then unique) if and
  only if $c_{ij}^{l}$, $c_{jk}^{l}$, $c_{ki}^{l}$ determined by
  Lemma~\ref{lem:c_length} satisfy the triangle inequalities.
\end{lemma}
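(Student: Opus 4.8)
The plan is to work in the upper half-space model of hyperbolic $3$-space, normalized by a hyperbolic isometry so that the ideal vertex $v_l$ sits at $\infty$. Then the three edges $li$, $lj$, $lk$ become vertical half-lines rising from points $v_i,v_j,v_k\in\C=\{z=0\}$, the three faces through $v_l$ become vertical half-planes, and the horosphere centered at $v_l$ becomes a horizontal plane $\{z=h\}$ carrying the intrinsic Euclidean metric $\tfrac1h\,|dw|$. The horospheres centered at $v_i,v_j,v_k$ become Euclidean spheres tangent to $\C$ with Euclidean diameters $d_i,d_j,d_k>0$. The key observation is that the plane $\{z=h\}$ is cut by the three vertical face-planes into a Euclidean triangle which is the image of the triangle $v_iv_jv_k\subset\C$ scaled by $\tfrac1h$; its side lengths are $\tfrac1h|v_i-v_j|$, $\tfrac1h|v_j-v_k|$, $\tfrac1h|v_k-v_i|$. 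By Lemma~\ref{lem:c_length}, applied to the three decorated ideal triangles $ijl$, $jkl$, $kil$ meeting at $v_l$, these three side lengths are exactly $c_{ij}^{l}$, $c_{jk}^{l}$, $c_{ki}^{l}$.

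From this the equivalence is almost immediate. An ideal tetrahedron is nondegenerate precisely when its four ideal vertices are not concyclic on the boundary sphere; with $v_l=\infty$ this means that $v_i,v_j,v_k$ are not collinear in $\C$, i.e.\ that they span a genuine Euclidean triangle. By the previous paragraph this triangle has side lengths proportional (with positive factor $\tfrac1h$) to $c_{ij}^{l}$, $c_{jk}^{l}$, $c_{ki}^{l}$, so it is nondegenerate if and only if these three numbers satisfy the strict triangle inequalities. This proves necessity and, read backwards, sets up sufficiency.

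For the construction and uniqueness, suppose the $c^{l}$'s satisfy the triangle inequalities. Using the scaling freedom fix $h=1$, and place $v_i,v_j,v_k\in\C$ as the vertices of the Euclidean triangle with side lengths $c_{ij}^{l},c_{jk}^{l},c_{ki}^{l}$; this is possible by the triangle inequalities and determines the configuration up to an orientation-preserving isometry of $\C$, that is, up to a hyperbolic isometry fixing $\infty$. Set $d_m=e^{-\lambda_{lm}}$ for $m\in\{i,j,k\}$. Measuring the signed distance along the vertical edge $lm$ between the plane $\{z=1\}$ and the top of the sphere of diameter $d_m$ gives $\lambda_{lm}=\log(1/d_m)$, as required. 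It remains to check the three edges not incident to $v_l$. For two finite horospheres the standard half-space computation (the same one underlying Lemma~\ref{lem:c_length}; see Figure~\ref{fig:ideal_triangle_half_space}) gives the signed distance $\log\frac{|v_m-v_n|^2}{d_md_n}$, which is negative exactly when the horospheres meet. Substituting $|v_i-v_j|=c_{ij}^{l}$, $d_i=e^{-\lambda_{li}}$, $d_j=e^{-\lambda_{lj}}$ and using the formula of Lemma~\ref{lem:c_length} for $c_{ij}^{l}$ yields
\begin{equation*}
  \log\frac{|v_i-v_j|^2}{d_id_j}
  =2\log c_{ij}^{l}+\lambda_{li}+\lambda_{lj}
  =(\lambda_{ij}-\lambda_{jl}-\lambda_{li})+\lambda_{li}+\lambda_{lj}
  =\lambda_{ij},
\end{equation*}
and likewise for $jk$ and $ki$. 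Thus all six prescribed distances are realized. Uniqueness follows because every decorated ideal tetrahedron with these data can be normalized in exactly this way, after which $v_i,v_j,v_k$ (up to the isometry) and $d_i,d_j,d_k$ are forced.

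The routine but essential point, and the only place a genuine computation is needed, is the signed-distance formula $\log\frac{|v_m-v_n|^2}{d_md_n}$ for two finite horospheres together with its sign convention. Once this and Lemma~\ref{lem:c_length} are in hand, the passage from the six distances $\lambda$ to the three arc-lengths $c^{l}$ together with the three distances $\lambda_{lm}$ is an invertible change of coordinates, and the whole lemma reduces to the elementary fact that three positive numbers are the side lengths of a Euclidean triangle if and only if they satisfy the triangle inequalities.
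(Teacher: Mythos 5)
Your proof is correct and follows exactly the route the paper intends: the paper offers no written proof beyond the observation that the horosphere centered at $v_l$ meets the tetrahedron in a euclidean triangle with side lengths $c_{ij}^{l}$, $c_{jk}^{l}$, $c_{ki}^{l}$, from which it says one ``easily deduces'' the lemma. Your half-space normalization, the signed-distance formula $\log\frac{|v_m-v_n|^2}{d_m d_n}$, and the verification that the three remaining distances come out to $\lambda_{ij}$, $\lambda_{jk}$, $\lambda_{ki}$ are precisely the details the paper leaves to the reader, and they check out.
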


So the six parameters $\lambda$ determine the congruence class of the
ideal tetrahedron ($2$ real parameters) and the choice of horospheres
($4$ parameters). 

Note that the angles of the euclidean triangles in which the
tetrahedron intersects the four horospheres are the dihedral angles of
the tetrahedron. This implies that the dihedral angles sum to
$\pi$ at each vertex, and further, that the dihedral angles at
opposite edges are equal. The space of ideal tetrahedra is therefore
parametrized by three dihedral angles $\alpha_{ij}=\alpha_{kl}$,
$\alpha_{jk}=\alpha_{il}$, $\alpha_{ki}=\alpha_{jl}$ satisfying
$\alpha_{il}+\alpha_{jl}+\alpha_{kl}=\pi$.

\subsection{Penner coordinates and shear coordinates}
\label{sec:penner_and_shear}

In Section~\ref{sec:hyp_struct_on_euc_triang}, we equipped a euclidean
triangulation $(\T,\ell)$ with a hyperbolic cusp metric that turns it
into an ideal hyperbolic triangulation. In this section, we will
identify the logarithmic edge lengths $\lambda$ (see
equation~\eqref{eq:lambda}) with the Penner
coordinates~\cite{penner_decorated_1987} and the logarithmic
length-cross-ratios $\log\lcr$ (see equation~\eqref{eq:lcr}) with the
shear coordinates~\cite{fock_dual_1997}~\cite{thurston_minimal_1998}
for this ideal triangulation. (The
handbook~\cite{papadopoulos_handbook_2007} is a good reference for the
pertinent aspects of Teichm\"uller theory.)

\begin{warning*}
  Our notation differs from Penner's in a potentially confusing
  way. His ``lambda-lengths'' are $\sqrt{2}e^{\lambda/2}=\sqrt{2}\ell$
  in our notation. Our $\lambda$s are the signed hyperbolic distances
  between horocycles.
\end{warning*}

Since the sides of an ideal hyperbolic triangle are complete
geodesics, there is a one-parameter family of ways to glue two sides
together. Penner coordinates and shear coordinates can be seen as two
ways to describe how ideal triangles are glued together along their
edges to form a hyperbolic surface with cusps.

Suppose $\T$ is a triangulated surface and $\lambda\in\R^{E}$. For
each triangle $ijk\in T$, take the decorated ideal triangle with
horocycle distances $\lambda_{ij}$, $\lambda_{jk}$, $\lambda_{ki}$,
and glue them so that the horocycles fit together (see
Figure~\ref{fig:penner_and_shear_coords}, left).
\begin{figure}
  \hfill%
  \includegraphics{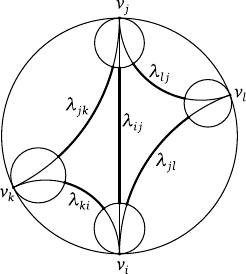}%
  \hfill%
  \includegraphics{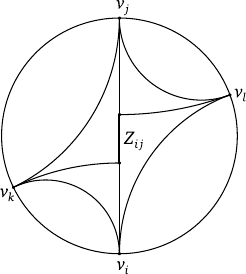}%
  \hspace*{\fill}%
  \caption{Penner coordinates \emph{(left)} and shear coordinates
    \emph{(right)}.}
  \label{fig:penner_and_shear_coords}
\end{figure}
The result is a hyperbolic surface with cusps at the vertices,
together with a particular choice of horocycles centered at the
cusps. In this way, the Penner coordinates $\lambda$ parametrize the
\emph{decorated Teichm\"uller} space, that is, the space of hyperbolic
cusp metrics on a punctured surface (modulo isotopy) with
horocycles centered at the cusps.

The shear coordinates represent another way to prescribe how ideal
triangles are glued, for which no choice of horospheres is
necessary. The shear coordinate $Z$ on an interior edge of an ideal
triangulation is the signed distance of the base points of the heights
from the opposite vertices (see
Figure~\ref{fig:penner_and_shear_coords}, right). The following
relation between Penner coordinates and shear coordinates is well
known.

\begin{lemma}
  \label{lem:penner_and_shear}
  If $\lambda\in\R^{E}$ are the Penner coordinates for an ideal
  triangulation with a particular choice of horocycles, then the shear
  coordinates $Z\in\R^{\Eint}$ are
  \begin{equation*}
    Z_{ij}=\tfrac{1}{2}(\lambda_{il}-\lambda_{lj}+\lambda_{jk}-\lambda_{ki}),
  \end{equation*}
  where $k$ and $l$ are the vertices opposite edge $ij$ as
  in Figure~\ref{fig:penner_and_shear_coords}.
\end{lemma}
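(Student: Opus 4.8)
The plan is to compute the shear coordinate directly from the horocyclic decoration, using the signed distances $p^{i}_{jk}$ already recorded in equation~\eqref{eq:pijk}. The essential point is that, although the shear coordinate is by definition independent of any choice of horocycles, we are free to introduce the horocycles prescribed by the Penner coordinates $\lambda$ and read off the answer in that picture; the $\lambda$-dependence of the individual $p$'s will then recombine into the stated formula. So first I would fix attention on the interior edge $ij$, shared by the two ideal triangles $ijk$ and $ilj$ glued so that the horocycles match. Because the gluing identifies the horocycle at $i$ coming from triangle $ijk$ with the one coming from triangle $ilj$, there is a single well-defined horocycle at the cusp $i$; I would orient the complete geodesic carrying edge $ij$ and measure signed distance along it from the point where this common horocycle meets the geodesic.

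Next I would locate the two base points on edge $ij$. The height (line of symmetry) from the opposite vertex $k$ in triangle $ijk$ meets edge $ij$ perpendicularly, and by equation~\eqref{eq:pijk} its base point lies at signed distance $p^{i}_{jk}=\tfrac12(\lambda_{ij}-\lambda_{jk}+\lambda_{ki})$ from the horocycle at $i$. By the symmetry of the ideal triangle that fixes $i$ and swaps the other two vertices, this is indeed the distance along $ij$ to the foot of the altitude from $k$. Symmetrically, the height from $l$ in triangle $ilj$ has its base point on $ij$ at signed distance $p^{i}_{lj}=\tfrac12(\lambda_{il}-\lambda_{lj}+\lambda_{ji})$ from the same horocycle at $i$. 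Since the shear coordinate $Z_{ij}$ is precisely the signed distance between these two base points along the edge, it equals the difference $p^{i}_{lj}-p^{i}_{jk}$; substituting the two expressions and cancelling the common $\tfrac12\lambda_{ij}$ term yields $Z_{ij}=\tfrac12(\lambda_{il}-\lambda_{lj}+\lambda_{jk}-\lambda_{ki})$, as claimed.

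The algebra here is trivial; the part that requires care is the bookkeeping of signs and orientations. I would need to verify that the orientation of the edge and the sign convention for the shear coordinate are chosen consistently, so that $Z_{ij}$ comes out as $p^{i}_{lj}-p^{i}_{jk}$ rather than its negative, and that the two distances $p^{i}_{jk}$ and $p^{i}_{lj}$ are genuinely measured from the horocycle at the \emph{same} endpoint $i$ in the same direction. Equally important is the justification that the horocycles at $i$ coming from the two adjacent triangles truly coincide after gluing---this is exactly the defining feature of the Penner-coordinate construction recalled at the start of Section~\ref{sec:penner_and_shear}, and it is what permits a single reference point to serve both triangles. Once these conventions are pinned down, the formula is an immediate consequence of equation~\eqref{eq:pijk}.
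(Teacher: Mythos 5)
Your proof is correct and is essentially the paper's own argument: the paper simply writes $Z_{ij}=p_{ki}^{j}-p_{il}^{j}$ and invokes equations~\eqref{eq:pijk}, whereas you measure both feet from the horocycle at $i$ rather than at $j$, writing $Z_{ij}=p^{i}_{lj}-p^{i}_{jk}$; the two are identical since $\lambda_{ij}=p^{i}_{jk}+p^{j}_{ki}=p^{i}_{lj}+p^{j}_{il}$. The bookkeeping points you flag (matching horocycles across the glued edge, consistent sign convention along $ij$) are exactly the content implicit in the paper's one-line proof.
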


\begin{proof}
  The claim follows from $Z_{ij}=p_{ki}^{j}-p_{il}^{j}$ and
  equations~\eqref{eq:pijk}.
\end{proof}

\begin{proposition}
  \label{prop:penner_and_shear}
  Let $(T,\ell)$ be a euclidean triangulation. The shear coordinates
  $Z\in\R^{\Eint}$ for the corresponding ideal triangulation (see
  Section~\ref{sec:hyp_struct_on_euc_triang}) are
  \begin{equation}
    \label{eq:shear_and_lcr}
    Z_{ij}=\log\lcr_{ij}
  \end{equation}
  (see equation~\eqref{eq:lcr}). Thus, for a suitable choice of
  horocycles, the Penner coordinates $\lambda\in\R^{E}$
  are given by equation~\eqref{eq:lambda}.
\end{proposition}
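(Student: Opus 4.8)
The plan is to prove the shear-coordinate formula \eqref{eq:shear_and_lcr} first, and then to deduce the statement about Penner coordinates from Lemma~\ref{lem:penner_and_shear}. The observation that makes the geometry painless is that the shear coordinate $Z_{ij}$ is a \emph{local} invariant of the two ideal triangles $ijk$ and $ilj$ glued along $ij$, so it depends only on the five euclidean lengths $\ell_{ij},\ell_{jk},\ell_{ki},\ell_{il},\ell_{lj}$. By Theorem~\ref{thm:hyperbolic_isometry}, applied to the two-triangle subcomplex, this invariant is unchanged under a discrete conformal change of metric, i.e.\ under the four-parameter group of vertex scalings acting on these five lengths. As the analysis of Section~\ref{sec:two_triv_examples} shows, the only invariant of this action is the length-cross-ratio $\lcr_{ij}$ of \eqref{eq:lcr}. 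Hence $Z_{ij}$ is a function of $\lcr_{ij}$ alone, and it suffices to compute this function on any convenient family of configurations realizing all positive values of $\lcr_{ij}$.

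First I would recall the standard description of the shear coordinate as a boundary cross-ratio: developing the two ideal triangles into $\mathbb{H}^2$ so that the common edge $ij$ is a fixed geodesic, the four ideal vertices land on $\partial\mathbb{H}^2=\RP^1$, and $Z_{ij}$ is the logarithm of their cross-ratio $\ccr$, with the orientation convention fixed so as to agree with Lemma~\ref{lem:penner_and_shear}. Next, for the convenient family I would take \emph{cyclic} configurations, in which the four points $z_i,z_l,z_j,z_k$ lie on a common circle $C$. Then the two euclidean triangles share the circumcircle $C$, so in the Klein model with $C$ as its ideal boundary both ideal triangles already live in the \emph{same} hyperbolic plane and are glued along the single chord $ij$, with no shearing freedom left to compute. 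The four ideal vertices are literally the four concyclic points, and their cross-ratio as points of $\partial\mathbb{H}^2$ equals the complex cross-ratio $\ccr(z_i,z_l,z_j,z_k)$ of the concyclic points in $\C$ (the projective parameter on the boundary conic agrees with the complex parameter on $C$). Since these points are concyclic this cross-ratio is real, and as noted in Section~\ref{sec:length-cross-ratio} its absolute value is exactly $\lcr_{ij}$. Thus $Z_{ij}=\log\lcr_{ij}$ on all cyclic configurations, and because every positive value of $\lcr_{ij}$ is attained by some cyclic quadrilateral while $Z_{ij}$ depends only on $\lcr_{ij}$, the identity \eqref{eq:shear_and_lcr} holds in general.

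Finally, for the Penner-coordinate statement I would substitute $\lambda=2\log\ell$ from \eqref{eq:lambda} into the formula of Lemma~\ref{lem:penner_and_shear}: the right-hand side becomes $\log\ell_{il}-\log\ell_{lj}+\log\ell_{jk}-\log\ell_{ki}=\log\lcr_{ij}$, which agrees with the shear coordinates just computed. Since two Penner-coordinate vectors produce the same shear coordinates precisely when they differ by a vertex shift $\lambda_{ij}\mapsto\lambda_{ij}+t_i+t_j$, and such shifts are exactly the effect of moving the horocycle at each cusp, the actual Penner coordinates of the induced structure differ from $2\log\ell$ only by such a shift. Adjusting the horocycles accordingly makes the Penner coordinates equal to $2\log\ell$, as claimed.

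The main obstacle is not the cyclic computation, which is essentially free, but pinning down the correct conventions in the boundary-cross-ratio description of the shear, so that the sign in \eqref{eq:shear_and_lcr} comes out as stated rather than reciprocal; one must also verify that the reduction ``$Z_{ij}$ depends only on $\lcr_{ij}$'' is legitimate, i.e.\ that the local gluing datum really is a discrete-conformal invariant of the pair of triangles, which is precisely the content of Theorem~\ref{thm:hyperbolic_isometry} restricted to the subcomplex consisting of $ijk$ and $ilj$.
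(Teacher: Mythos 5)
Your argument is correct, and its computational core is genuinely different from the paper's. Both proofs share the same key reduction: since a discrete conformal change of the five lengths alters neither $\lcr_{ij}$ nor the induced hyperbolic structure (hence not $Z_{ij}$), and since $\lcr_{ij}$ is the complete invariant of the two-triangle configuration under vertex scalings, one may restrict to configurations where the two triangles share a circumcircle. From there the paths diverge. The paper stays inside its own definition of the shear (signed distance between the feet of the two heights): it normalizes further so that $ij$ is a diameter, writes that distance in the Klein model as $\tfrac{1}{2}\log\frac{|av_{j}|\,|bv_{i}|}{|av_{i}|\,|bv_{j}|}$, and finishes with the elementary identities $|av_{j}|/|av_{i}|=\ell_{jk}^{2}/\ell_{ki}^{2}$ and $|bv_{i}|/|bv_{j}|=\ell_{lj}^{2}/\ell_{il}^{2}$. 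You instead invoke the description of the shear as the logarithm of the boundary cross-ratio of the four ideal vertices, identify the projective parameter on the Klein conic with the complex parameter on the circle, and read off $|\ccr(z_i,z_l,z_j,z_k)|=\lcr_{ij}$ directly; this buys a computation-free cocircular case and makes the M\"obius-invariance origin of the formula transparent. The price is that the cross-ratio description of the shear is not something the paper establishes --- its definition of $Z_{ij}$ is metric, via perpendicular feet --- so to make your proof self-contained relative to the paper you would still have to derive that description (e.g.\ by the half-plane computation placing the common edge on the imaginary axis and the opposite ideal vertices at $-1$ and $t$, giving $Z=\log t$ and $|\ccr|=t$ in the right ordering), and nail down the ordering so the sign is not reciprocal; you correctly flag this as the remaining work. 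Your handling of the Penner statement (substitute $\lambda=2\log\ell$ into Lemma~\ref{lem:penner_and_shear} and absorb the residual freedom into horocycle shifts) matches the paper's, just with the horocycle bookkeeping made explicit.
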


\begin{proof}
  Consider an interior edge $ij\in E$ between triangles $ijk$ and
  $jil$. Without loss of generality, we may assume that the triangles
  have a common circumcircle. For otherwise we can change $\ell$
  discretely conformally so that this holds, and this changes neither
  $\lcr_{ij}$ nor the hyperbolic cusp metric on $T$. We may further
  assume that $ij$ is a diameter of the common circumcircle. For
  otherwise we may apply a projective transformation that maps the
  circle onto itself so that this holds. This is an isometry with
  respect to the hyperbolic metric of the Klein model, and it is a
  discrete conformal map of the quadrilateral formed by the two
  triangles. We arrive at the situation shown in
  Figure~\ref{fig:shear_and_lcr} in the Klein model. 
  \begin{figure}
    \centering
    \includegraphics{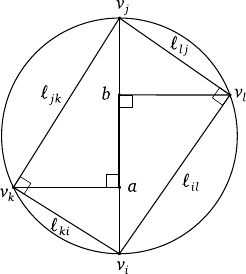}
    \caption{Shear and length-cross-ratio (see proof of
      Proposition~\ref{prop:penner_and_shear}).}
    \label{fig:shear_and_lcr}
  \end{figure}
  The hyperbolic heights are also the euclidean heights, and in the
  hyperbolic metric of the Klein model, the distance between their
  base points $a$ and $b$ is
  \begin{equation*}
    Z_{ij}=\tfrac{1}{2}\log\frac{|av_{j}|\,|bv_{i}|}{|av_{i}|\,|bv_{j}|}\,,
  \end{equation*}
  where $|xy|$ denotes the euclidean distance between $x$ and $y$.
  Since by elementary euclidean geometry 
  \begin{equation*}
    \frac{|av_{j}|}{|av_{i}|}=\frac{{\ell_{jk}}^{2}}{{\ell_{ki}}^{2}}
    \qquad\text{and}\qquad
    \frac{|bv_{i}|}{|bv_{j}|}=\frac{{\ell_{lj}}^{2}}{{\ell_{il}}^{2}}\,,
  \end{equation*}
  this implies
  equation~\eqref{eq:shear_and_lcr}. Now
  Lemma~\ref{lem:penner_and_shear} implies the statement about Penner coordinates.
\end{proof}

\subsection{Ideal hyperbolic polyhedra with prescribed intrinsic
  metric}
\label{sec:ideal_hyp_poly_with_prescribed_intrinsic_metric}

The discrete conformal mapping problems described in
Section~\ref{sec:mapping_problems} are equivalent to problems
involving the polyhedral realization of surfaces with hyperbolic cusp
metrics, like the following.

\begin{problem}
  \label{prob:ideal_polyhedral_realization}
  \emph{\textbf{Given}} an ideal triangulation $\T$ of a punctured
  sphere equipped with a hyperbolic metric with cusps,
  \emph{\textbf{find}} an isometric embedding of $\T$ as ideal
  hyperbolic polyhedron in $H^{3}$. The polyhedron is not required to
  be convex, but it is required that the edges of the polyhedron are
  edges of $\T$.
\end{problem}

\begin{theorem}
  \label{thm:ideal_polyhedral_realization}
  For any vertex $l$ of $\T$,
  Problem~\ref{prob:ideal_polyhedral_realization} has at most one
  solution that is star-shaped with respect to $l$.
\end{theorem}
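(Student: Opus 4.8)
The plan is to fix the upper half-space model of $H^3$ and place the distinguished ideal vertex $l$ at $\infty$. Isometries of $H^3$ fixing $\infty$ are exactly the euclidean similarities of the boundary plane $\C=\partial H^3\setminus\{\infty\}$, so it suffices to show that a star-shaped solution determines the positions $z_i\in\C$ of the remaining ideal vertices up to such a similarity. Geodesics emanating from $\infty$ are vertical lines, so ``star-shaped with respect to $l$'' means that $P$ is vertically star-shaped: the faces of $P$ not incident to $l$ (hemispheres in this model) form a graph over their vertical projection, while the faces incident to $l$ are the vertical half-planes over the edges of the boundary polygon spanned by the neighbours of $l$. Hence the projection realizes $\T':=\T\setminus\operatorname{star}(l)$ as an \emph{embedded} euclidean triangulation $(\T',\tilde\ell)$ with $\tilde\ell_{ab}=|z_a-z_b|$ and simple boundary polygon.

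First I would record two consequences of this picture. Because the projection is an embedding, the euclidean triangles around an interior vertex of $\T'$ fill angle exactly $2\pi$, so $(\T',\tilde\ell)$ is flat there. Second, the shear coordinate of the induced metric of $\partial P$ along an interior edge $ij$ of $\T'$ is the logarithm of the modulus of the complex cross-ratio of the four ideal vertices of the two incident faces, which by the length-cross-ratio formula~\eqref{eq:lcr} equals $\log\lcr_{ij}$ of $(\T',\tilde\ell)$; by Proposition~\ref{prop:penner_and_shear} this is the shear prescribed by the given intrinsic metric.

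Now suppose $P^{(1)}$ and $P^{(2)}$ are two star-shaped solutions, giving embedded planar triangulations $(\T',\tilde\ell^{(1)})$ and $(\T',\tilde\ell^{(2)})$. By the previous paragraph both have the same length-cross-ratios on every interior edge of $\T'$, so by Proposition~\ref{prop:conf_equiv_in_terms_of_lcr} they are discretely conformally equivalent, $\tilde\ell^{(2)}=e^{(u_a+u_b)/2}\tilde\ell^{(1)}$ for some $u\colon V_{\T'}\to\R$. The remaining constraint comes from the edges incident to $l$: for a boundary edge $i_k i_{k+1}$ of $\T'$, lying in the triangle $i_k i_{k+1}w$ of $\T'$ and in the face $l\,i_k i_{k+1}$, the length-cross-ratio prescribed by the metric is computed with $z_l=\infty$, the factors involving $z_l$ cancel, and one is left with $\tilde\ell_{i_{k+1}w}/\tilde\ell_{i_kw}$. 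Requiring this to agree for $\tilde\ell^{(1)}$ and $\tilde\ell^{(2)}$ forces $e^{(u_{i_{k+1}}-u_{i_k})/2}=1$; since the boundary is a single cycle, $u$ is constant on the boundary of $\T'$. Scaling $P^{(2)}$ by the corresponding factor (an isometry of $H^3$ fixing $\infty$) we may assume $u=0$ on the boundary. Then, with representative $\ell=\tilde\ell^{(1)}$, both $(\T',\tilde\ell^{(1)})$ and the rescaled $(\T',\tilde\ell^{(2)})$ solve the same instance of Problem~\ref{prob:general}: $V_0$ is the (nonempty) set of neighbours of $l$ with prescribed $u=0$, and $\Theta_i=2\pi$ at the interior vertices $V_1$. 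By Theorem~\ref{thm:dconfmap_unique} this solution is unique, so the two triangulations coincide and $P^{(1)}$, $P^{(2)}$ agree up to an isometry of $H^3$.

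The hard part will be the geometric translation in the first two paragraphs: verifying that star-shapedness genuinely yields an embedded planar triangulation that is flat at interior vertices and has a simple boundary polygon (so the angle sums are $2\pi$ and not larger multiples), and that the real shear of $\partial P$ along each edge is precisely the logarithm of the modulus of the cross-ratio of the projected ideal vertices. Once this dictionary between the three-dimensional realization and the planar discrete conformal data is in place, the uniqueness follows directly from the convexity-based uniqueness for Problem~\ref{prob:general} (Theorem~\ref{thm:dconfmap_unique}); the only extra input is the short cross-ratio computation at the edges incident to $l$, which pins the boundary scale factors to a constant.
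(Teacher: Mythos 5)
Your argument is correct and ultimately runs on the same engine as the paper's: both reduce uniqueness of the star-shaped realization to Theorem~\ref{thm:dconfmap_unique}, i.e.\ to the convexity-based uniqueness for Problem~\ref{prob:general} on $\T'=\T$ minus the open star of $l$, with $\Theta_i=2\pi$ at interior vertices and Dirichlet data on the boundary. The difference lies in how the boundary condition is extracted from the three-dimensional picture. The paper works with decorated ideal tetrahedra: it takes Penner coordinates $\lambda$ for the given cusp metric, sets $\ell=e^{\lambda/2}$, and shows (via Lemmas~\ref{lem:c_length} and~\ref{lem:cij_triangle_ineq} and Figure~\ref{fig:ideal_tet_with_u}) that star-shaped realizations correspond exactly to solutions of Problem~\ref{prob:general} with the explicit boundary data $u_i=-\lambda_{il}$, the scale factors being horosphere distances to the apex $l$. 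You instead normalize $l$ to $\infty$ in the half-space model, recover the conformal class of the projected triangulation from the shear/cross-ratio dictionary of Proposition~\ref{prop:penner_and_shear} together with Proposition~\ref{prop:conf_equiv_in_terms_of_lcr}, and then use the prescribed shears on the boundary edges of $\T'$ --- where the cross-ratio degenerates to the ratio $\tilde\ell_{i_{k+1}w}/\tilde\ell_{i_kw}$ --- to force $u$ to be constant along the (connected) boundary, absorbing that constant into an isometry fixing $\infty$. Both routes must pay the same geometric toll, namely that star-shapedness with respect to $l$ is equivalent to the flattened triangulation $(\T',\tilde\ell)$ being flat at interior vertices and free of self-overlap; you flag this honestly, and the paper asserts the corresponding equivalence with comparable brevity. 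What the paper's decorated-tetrahedron bookkeeping buys is the explicit Dirichlet data $u_i=-\lambda_{il}$, which simultaneously yields the construction direction (existence of a star-shaped polyhedron from a solution of Problem~\ref{prob:general}); your shear computation is slightly more self-contained on the uniqueness side, since it never requires a choice of horospheres.
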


Problem~\ref{prob:ideal_polyhedral_realization} is equivalent to (a
special case of) Problem~\ref{prob:general}, so
Theorem~\ref{thm:ideal_polyhedral_realization} follows from
Theorem~\ref{thm:dconfmap_unique}. Indeed, to solve
Problem~\ref{prob:ideal_polyhedral_realization} one may proceed as
follows. Let $\lambda\in\R^{E}$ be the Penner coordinates for the
ideal triangulation $\T$, and let $\ell=e^{\lambda/2}$. Choose a
vertex $l$ of $\T$ and let the triangulation $\T'$ be $\T$ minus the
open star of $l$. Solve Problem~\ref{prob:general} for $\T'$,
prescribing $\Theta_{i}=2\pi$ if $i$ is an interior vertex and
$u_{i}=-\lambda_{il}$ if $i$ is a boundary vertex. Suppose a
solution $u\in\R^{V'}$ exists. This leads to a flat triangulation
$(\T',\tilde\ell)$. Suppose further that $(\T',\tilde\ell)$ does not
overlap with itself when developed in the plane. For each triangle
$ijk$ of $\T'$, construct the decorated ideal tetrahedron (see
Section~\ref{sec:decorated_ideal_triangs_and_tets}) with
horosphere-distances $\lambda_{ij}$, $\lambda_{jk}$, $\lambda_{ki}$,
and $-u_{i}$, $-u_{j}$, $-u_{k}$ as shown in
Figure~\ref{fig:ideal_tet_with_u}.
\begin{figure}
  \centering
  \includegraphics{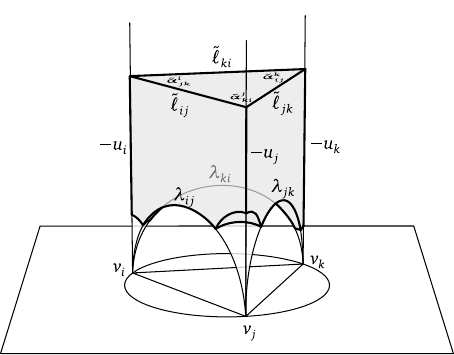}
  \caption{Discretely conformally flattening a euclidean triangulation
    is equivalent to constructing an ideal polyhedron with prescribed
    hyperbolic cusp metric.}
  \label{fig:ideal_tet_with_u}
\end{figure}
They exist by Lemma~\ref{lem:cij_triangle_ineq}, because by
Lemma~\ref{lem:c_length}, the intersection of the ideal tetrahedron
with the horosphere centered at the vertex opposite $ijk$ is the
euclidean triangle with side lengths $\tilde\ell_{ij}$,
$\tilde\ell_{jk}$, $\tilde\ell_{ki}$ as shown in the figure. Hence,
all these ideal tetrahedra fit together to form a solution of
Problem~\ref{prob:ideal_polyhedral_realization} that is star-shaped
with respect to $l$.

Conversely, any solution of
Problem~\ref{prob:ideal_polyhedral_realization} that is star-shaped
with respect to $l$ yields a solution without self-overlap of the
corresponding instance of Problem~\ref{prob:general}.

\begin{remark}
  Note the similarity with the procedure for mapping to a sphere
  described in Section~\ref{sec:sphere}. 
\end{remark}

Numerous variations of Problem~\ref{prob:ideal_polyhedral_realization}
can be treated in similar fashion. We mention only the following.

\begin{problem}
  \label{prob:ideal_torus_realization}
  \emph{\textbf{Given}} an ideal triangulation $T$ of a punctured
  torus equipped with a hyperbolic metric with cusps,
  \emph{\textbf{find}} an isometric embedding of the universal cover
  of $T$ as an ideal polyhedron in $H^{3}$ that is symmetric with
  respect to an action of the fundamental group of $\T$ by parabolic
  isometries. The polyhedron is not required to be convex, but it is
  required that the polyhedron is star-shaped with respect to the
  ideal fixed point of the parabolic isometries and that the edges of
  the polyhedron are edges of $T$.
\end{problem}

\begin{theorem}
  \label{thm:ideal_torus_realization}
  If Problem~\ref{prob:ideal_torus_realization} has a solution,
  it is unique.
\end{theorem}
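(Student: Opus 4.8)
The plan is to reduce Theorem~\ref{thm:ideal_torus_realization} to the uniqueness-up-to-scale part of Theorem~\ref{thm:dconfmap_unique}, just as Theorem~\ref{thm:ideal_polyhedral_realization} was reduced to its uniqueness part, the only genuinely new work being the bookkeeping of the scaling freedom. First I would normalize a hypothetical solution. Since the symmetry group is generated by parabolic isometries sharing a single ideal fixed point, I place that point at $\infty$ in the upper half-space model, so that the parabolic isometries act as Euclidean translations $z\mapsto z+a$ of the boundary plane $\C=\partial H^{3}$. The developing map then sends the ideal vertices (cusps) of the universal cover of $\T$ to a doubly-periodic set of points $z_{i}\in\C$, the triangular faces becoming ideal hyperbolic triangles spanned by these points. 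Because the polyhedron is star-shaped with respect to $\infty$ and its edges are edges of $\T$, intersecting it with a sufficiently high horosphere $\{\mathrm{height}=h\}$ cuts each cone tetrahedron with apex $\infty$ over a face $z_{i}z_{j}z_{k}$ in the Euclidean triangle with vertices $(z_{i},h),(z_{j},h),(z_{k},h)$. These triangles tile $\C$, and modulo the translation lattice they assemble into a flat Euclidean triangulation $(\T,\tilde\ell)$ of the torus with $\tilde\ell_{ij}=|z_{i}-z_{j}|$ and angle sum $2\pi$ at every vertex.

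Next I would identify which flat torus this is. Since the embedding is required to be isometric to the given hyperbolic cusp metric, the shear coordinates of the induced metric are the prescribed ones; and by the computation behind Proposition~\ref{prop:penner_and_shear}, the shear coordinate of $(\T,\tilde\ell)$ on an interior edge $ij$ is $\log\lcr_{ij}$, the logarithm of the Euclidean length-cross-ratio $|\ccr(z_{i},z_{l},z_{j},z_{k})|$. Hence $(\T,\tilde\ell)$ lies in the discrete conformal class $\cclass$ determined by the given cusp metric (Theorem~\ref{thm:hyperbolic_isometry}). In other words, every solution that is star-shaped with respect to the parabolic fixed point produces a solution of Problem~\ref{prob:prescribe_Theta} for $\cclass$ with $\Theta_{i}=2\pi$ for all $i$ (note that $\sum_{i}\Theta_{i}=2\pi|V|=\pi|T|$ for a torus, so Condition~\ref{cond:sum_theta_equality} is automatically satisfied).

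Finally I would run the uniqueness argument. Given two solutions, the previous steps attach to each a flat Euclidean torus in the same class $\cclass$ and with the same angle sums $\Theta$; by Theorem~\ref{thm:dconfmap_unique} the two discrete metrics $\tilde\ell$ agree up to a global scale factor. A flat torus metric with prescribed edge lengths determines its developing image $\{z_{i}\}\subset\C$ uniquely up to a Euclidean similarity of $\C$, namely the composition of a rotation, a translation, and the residual scaling. The key observation is that each such similarity extends to an isometry of $H^{3}$ fixing $\infty$: translation $(z,h)\mapsto(z+a,h)$, rotation $(z,h)\mapsto(e^{i\theta}z,h)$, and scaling $(z,h)\mapsto(cz,ch)$ all preserve the metric $(|dz|^{2}+dh^{2})/h^{2}$ and normalize the parabolic translation group, hence carry solutions to solutions. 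Applying the appropriate such isometry carries one solution onto the other, so the two solutions coincide up to an ambient isometry of $H^{3}$, which is the only sensible notion of uniqueness for an isometric embedding; this proves Theorem~\ref{thm:ideal_torus_realization}.

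The main obstacle --- really the only point requiring care rather than routine verification --- is the shear matching of the second step: one must check that the intrinsic hyperbolic metric of the ideal-triangle faces of the polyhedron is recorded precisely by the real length-cross-ratios $\lcr_{ij}$ of the developed Euclidean vertices, the imaginary part of the complex cross-ratio being merely the bending angle across the edge, which is invisible to the induced metric. Only once this is established is ``isometric to the given cusp metric'' equivalent to ``lies in the conformal class $\cclass$,'' and only then does the reduction to Theorem~\ref{thm:dconfmap_unique} go through.
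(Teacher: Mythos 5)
Your argument is correct and is essentially the proof the paper intends: Theorem~\ref{thm:ideal_torus_realization} is stated there as a variation of Theorem~\ref{thm:ideal_polyhedral_realization} to be ``treated in similar fashion,'' namely by taking a horospherical cross-section at the parabolic fixed point (the apex of the tetrahedra in Figure~\ref{fig:ideal_tet_with_u}, now placed at $\infty$), identifying the resulting flat torus as a solution of Problem~\ref{prob:prescribe_Theta} in the conformal class determined by the shear coordinates (Proposition~\ref{prop:penner_and_shear}), and invoking the uniqueness-up-to-scale part of Theorem~\ref{thm:dconfmap_unique}. Your observation that the residual global scale is absorbed by a similarity of $\C$, i.e.\ an isometry of $H^{3}$ fixing $\infty$ that normalizes the parabolic group, is exactly the bookkeeping the paper leaves implicit.
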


\subsection{The variational principles and hyperbolic volume}
\label{sec:hyperbolic_volume}

The connection with hyperbolic polyhedra elucidates the nature and
origin of the variational principles for discrete conformal maps
(Proposition~\ref{prop:first_variational}
and~Propositions~\ref{prop:second_variational_1},
\ref{prop:second_variational_2}). In this section, we will indicate
how to derive these variational principles from Milnor's equation for
the volume of an ideal tetrahedron and Schl\"afli's formula.

Milnor~\cite{milnor_hyperbolic_1982}\,\cite{milnor_to_1994} showed
that the volume of an ideal tetrahedron with dihedral angles $\alpha$,
$\beta$, $\gamma$ is $V(\alpha, \beta, \gamma)$ as defined by
equation~\eqref{eq:V}. Schl\"afli's differential volume formula
(more precisely, Milnor's generalization which allows for ideal
vertices~\cite{milnor_schlfli_1994}) says that its derivative is
\begin{equation}
  \label{eq:dV}
  dV = -\frac{1}{2}\sum \lambda_{ij}\,d\alpha_{ij},
\end{equation}
where the sum is taken over the six edges $ij$, $\lambda_{ij}$ is the
signed distance between horospheres centered at the vertices $i$ and
$j$, and $\alpha_{ij}$ is the interior dihedral angle. (The choice of
horospheres does not matter because the dihedral angle sum at a vertex
is constant, see Section~\ref{sec:decorated_ideal_triangs_and_tets}.)

Using the correspondence between ideal tetrahedra and euclidean
triangles, the volume function $V$ can be reinterpreted as a function
of the angles of a euclidean triangle, whose derivatives
$(\frac{\partial}{\partial\alpha}-\frac{\partial}{\partial\beta})V$,
etc., are logarithmic ratios of the sides. This is the essential
property of $V$ used in the second variational principle
(Propositions~\ref{prop:second_variational_1},
\ref{prop:second_variational_2}).

Now define
\begin{equation}
  \label{eq:hat_V}
  \widehat
  V(\lambda_{12},\lambda_{23},\lambda_{31},\lambda_{14},\lambda_{24},\lambda_{34})
  =\tfrac{1}{2}\sum_{ij}\alpha_{ij}\lambda_{ij}
  + V(\alpha_{14},\alpha_{24},\alpha_{34}),
\end{equation}
where the dihedral angles $\alpha_{12}=\alpha_{34}$,
$\alpha_{23}=\alpha_{14}$, $\alpha_{31}=\alpha_{24}$ of the decorated
tetrahedron are considered as functions of the $\lambda_{ij}$. (They
are the angles in a euclidean triangle with side lengths
$e^{(\lambda_{12}-\lambda_{14}-\lambda_{24})/2}$,
$e^{(\lambda_{23}-\lambda_{24}-\lambda_{34})/2}$,
$e^{(\lambda_{31}-\lambda_{34}-\lambda_{14})/2}$, see
Section~\ref{sec:decorated_ideal_triangs_and_tets}.) Then, by
equation~\eqref{eq:dV},
\begin{equation}
  \label{eq:dVhat}
  d\widehat V=\frac{1}{2}\sum\alpha_{ij}\,d\lambda_{ij}.
\end{equation}
This implies Proposition~\ref{prop:grad_E} on the partial derivatives
of $E_{\T,\Theta,\lambda}$, and therefore
Proposition~\ref{prop:first_variational} (the first variational
principle), because using~\eqref{eq:hat_V}
(and~\eqref{eq:tilde_lambda}) we can rewrite
equation~\eqref{eq:E_of_lambda} as
\begin{equation}
  \label{eq:E_of_lambda_with_V}
  \begin{split}
    E_{\T,\Theta,\lambda}(u)= \sum_{ijk\in T} 2\hat
    V(\lambda_{ij},\lambda_{jk},\lambda_{ki},-u_{i},-u_{j},-u_{k})&\\
    -\sum_{ij\in E}\Phi_{ij}\lambda_{ij} +\sum_{i\in
      V}\Theta_{i}u_{i},&
  \end{split}
\end{equation}
where 
\begin{equation}
  \label{eq:theta_ij_for_dconf}
  \Phi_{ij}=
  \begin{cases}
    \pi,\qquad\text{if $ij$ is an interior edge},\\
    \tfrac{\pi}{2},\qquad\text{if $ij$ is a boundary edge}.
  \end{cases}
\end{equation}
(See also Figure~\ref{fig:ideal_tet_with_u}.)

\section{The discrete conformal equivalence of hyperbolic triangulations}
\label{sec:dconf_hyperbolic}

\subsection{Definition and variational principle}
\label{sec:d_conf_equiv_hyp}

In Section~\ref{sec:hyperbolic_volume} we derived the first
variational principle for discrete conformal maps from Milnor's
equation for the volume of an ideal tetrahedron and Schl\"afli's
formula. A straightforward modification of this derivation leads to a
companion theory of discrete conformality for \emph{hyperbolic}
triangulations. This makes it possible, for example, to construct
discretely conformal uniformizations of higher genus surfaces as shown
in Figure~\ref{fig:pretzel}.
\begin{figure}
  \hfill%
  \raisebox{.1\textwidth}{\includegraphics[width=.5\textwidth]{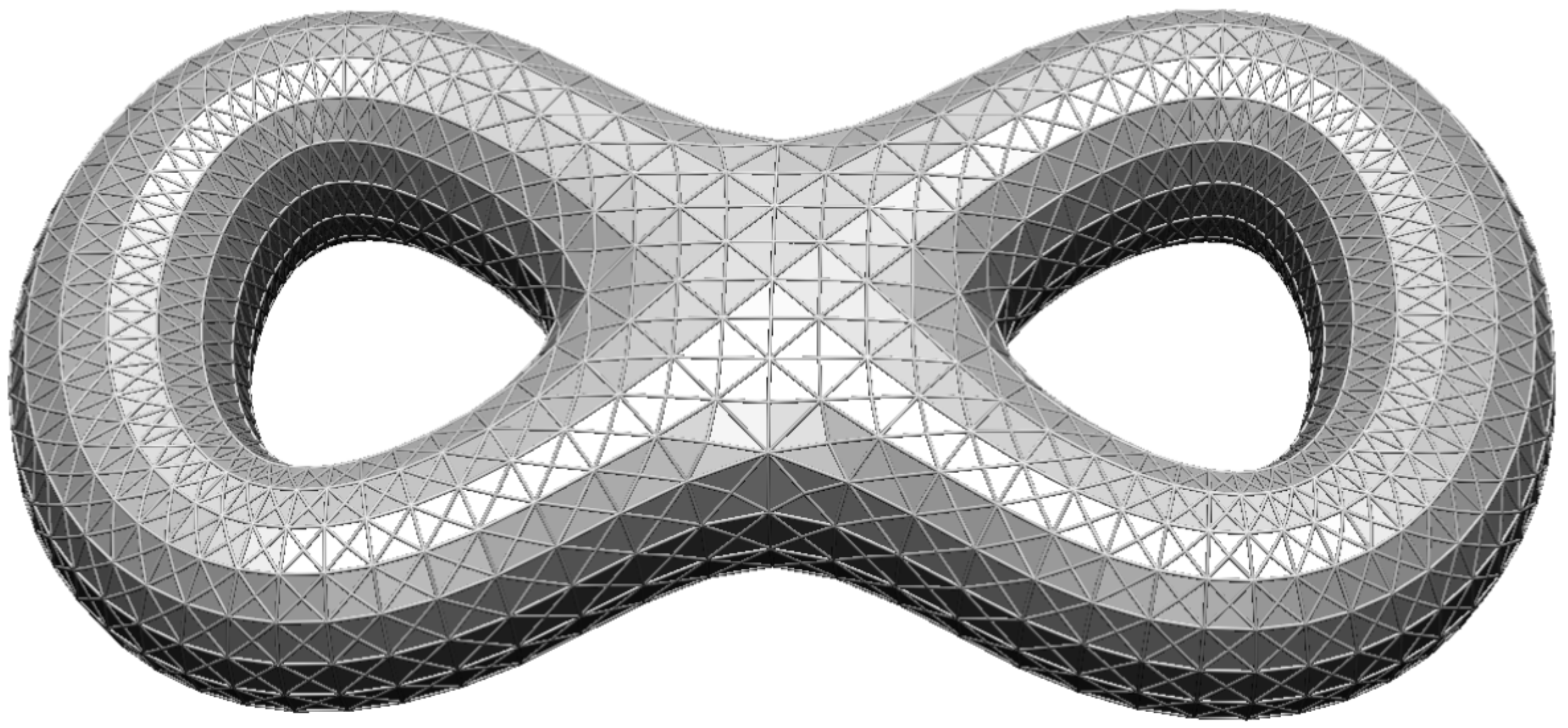}}
  \hfill%
  \includegraphics[width=.45\textwidth]{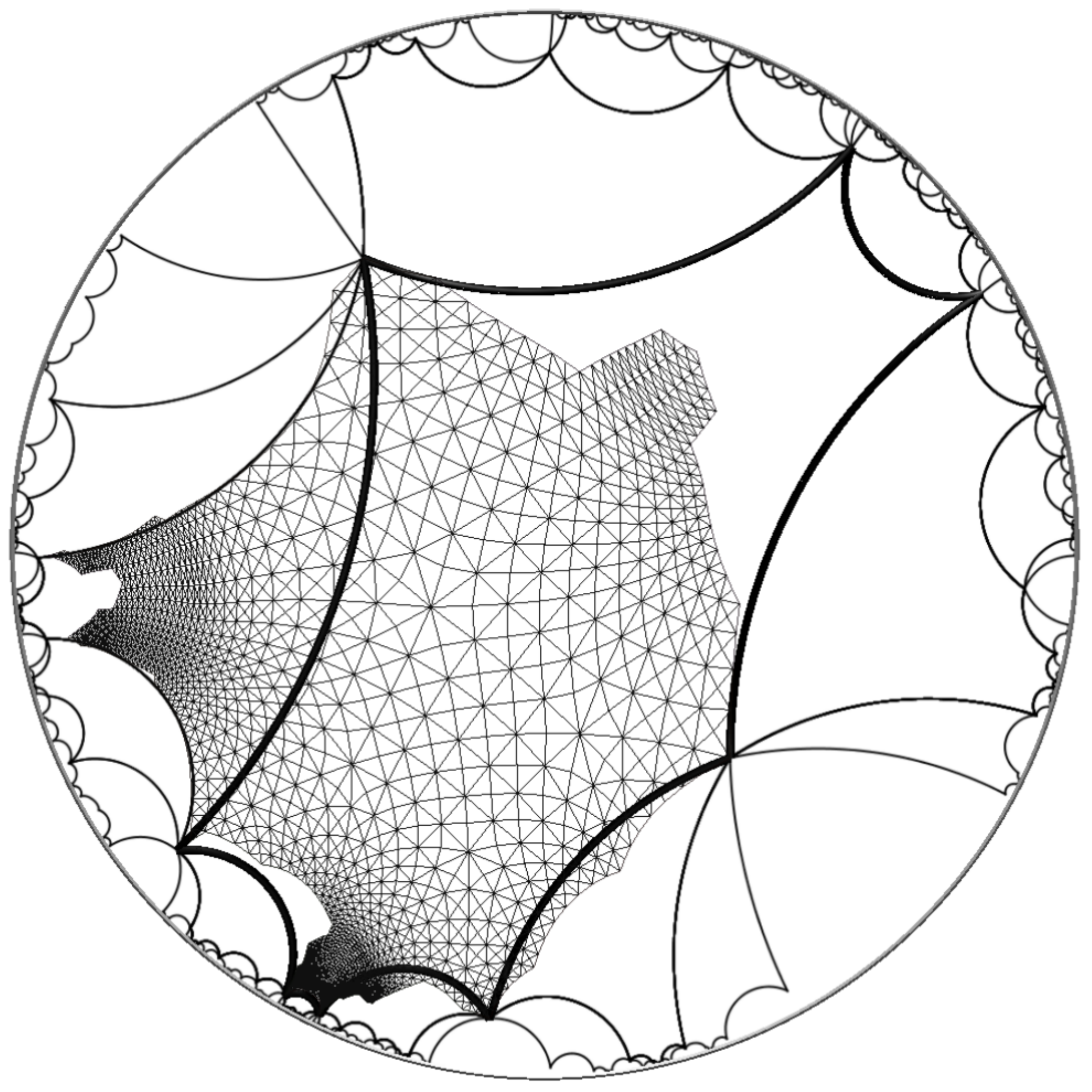}
  \hspace*{\fill}
  \caption{Discretely conformal uniformization of a genus-two surface.}
  \label{fig:pretzel}
\end{figure}
We will present the basic theory in this section, and show how to
derive it by hyperbolic volume considerations in the next. 

Suppose $\T$ is a surface triangulation and $\ell\in\R_{>0}^{E}$ is a
discrete metric, that is, a real valued function on the set of edges
that satisfies all triangle inequalities. Then there is not only a
euclidean triangulation $(\T,\ell)$ with these edge lengths. One can
equally construct hyperbolic triangles $ijk$ with hyperbolic side
lengths $\ell_{ij}$, $\ell_{jk}$, $\ell_{ki}$ and glue them
together. The result is a \emph{hyperbolic triangulation} which we
denote by $(\T,\ell)_{h}$.

\begin{definition}
  \label{def:d_conf_hyp}
  Two combinatorially equivalent hyperbolic triangulations,
  $(T,\ell)_{h}$ and $(\T,\tilde\ell)_{h}$, are \emph{discretely
    conformally equivalent} if the discrete metrics $\ell$ and
  $\tilde\ell$ are related by
  \begin{equation}
    \label{eq:tilde_ell_hyp}
    \sinh\frac{\tilde\ell_{ij}}{2}=
    e^{\frac{1}{2}(u_{i}+u_{j})}\,\sinh\frac{\ell_{ij}}{2}
  \end{equation}
  for some function $u:V\rightarrow\R$.
\end{definition}

Thus, in the hyperbolic version of the theory
equation~\eqref{eq:lambda} is replaced by
\begin{equation}
  \label{eq:lambda_hyp}
  \lambda = 2\log\sinh\frac{\ell}{2},
\end{equation}
so that in terms of $\lambda$ and $\tilde\lambda$, the
relation~\eqref{eq:tilde_ell_hyp} is again equivalent
to~\eqref{eq:tilde_lambda}.  The role of $E_{\T,\Theta,\lambda}(u)$ is
played by the function
\begin{equation}
  \label{eq:Ehyp}
  \Ehyp_{\T,\Theta,\lambda}(u) = \sum_{ijk\in T}
  2\Vhathyp(\lambda_{ij},\lambda_{jk},\lambda_{ki},-u_i,-u_j,-u_k,)
  +\sum_{i\in V}\Theta_iu_i\,,
\end{equation}
where
\begin{multline}
  \label{eq:Vhathyp}
  2\Vhathyp(\lambda_{12},\lambda_{23},\lambda_{31}, 
  \lambda_{1}, \lambda_{2}, \lambda_{3}) =\\
  \alpha_{1}\lambda_{1}
  +\alpha_{2}\lambda_{2}
  +\alpha_{3}\lambda_{3}
  +\alpha_{12}\lambda_{12}
  +\alpha_{23}\lambda_{23}
  +\alpha_{31}\lambda_{31}\\
  +\ML(\alpha_{1})
  +\ML(\alpha_{2})
  +\ML(\alpha_{3})
  +\ML(\alpha_{12})
  +\ML(\alpha_{23})
  +\ML(\alpha_{31})\\
  +\ML\Big(
  \tfrac{1}{2} \big(\pi-\alpha_{1}-\alpha_{2}-\alpha_{3}\big)
  \Big),
\end{multline}
and $\alpha_{1}$, $\alpha_{2}$, $\alpha_{3}$ are the angles in a
hyperbolic triangle with side lengths 
\begin{equation}
  \label{eq:tilde_ell_with_arsinh}
  \begin{split}
    \tilde\ell_{23}=
    2\arsinh
    \big(e^{\frac{1}{2}(\lambda_{23}-\lambda_{2}-\lambda_{3})}\big),\\
    \tilde\ell_{31}=
    2\arsinh 
    \big(e^{\frac{1}{2}(\lambda_{31}-\lambda_{3}-\lambda_{1})}\big),\\
    \tilde\ell_{12}=
    2\arsinh 
    \big(e^{\frac{1}{2}(\lambda_{12}-\lambda_{1}-\lambda_{2})}\big),
  \end{split}
\end{equation}
and
\begin{equation}
  \label{eq:alpha_ij}
  \begin{split}
    \alpha_{23} & = \tfrac{1}{2} \big(
    \pi + \alpha_{1} - \alpha_{2} - \alpha_{3} \big),\\ 
    \alpha_{31} & = \tfrac{1}{2} \big(
    \pi - \alpha_{1} + \alpha_{2} - \alpha_{3} \big),\\
    \alpha_{12} & = \tfrac{1}{2} \big(
    \pi - \alpha_{1} - \alpha_{2} + \alpha_{3} \big).
  \end{split}
\end{equation}
Thus, $\Vhathyp$ is defined only on the domain where
$\tilde\ell_{12}$, $\tilde\ell_{23}$, $\tilde\ell_{31}$ satisfy the
triangle inequalities. However, exactly as in the case of
$E_{\T,\Theta,\lambda}(u)$, we can extend the domain of definition of
$\Ehyp_{\T,\Theta,\lambda}(u)$ to the whole of $\R^{V}$:

\begin{proposition}
  \label{prop:E_extend_hyp}
  Extend the domain of definition of $\Ehyp_{\T,\Theta,\lambda}(u)$ to
  $\R^{V}$ by declaring the angles in ``broken'' triangles to be
  $0$, $0$, $\pi$, respectively. The resulting function is
  continuously differentiable on $\R^{V}$.
\end{proposition}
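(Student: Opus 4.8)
The plan is to reduce everything, exactly as in the euclidean case (Propositions~\ref{prop:f_extend} and~\ref{prop:E_extend}), to the per-triangle building block $\Vhathyp$ appearing in~\eqref{eq:Ehyp}. It suffices to show two things for the extended $\Vhathyp$: that it is continuous on the whole of $\R^6$, and that its gradient in the $u$-variables extends continuously across the boundary of the triangle-inequality domain. Both will follow from the continuity of the extended angle functions, and then continuity of $\Ehyp_{\T,\Theta,\lambda}$ together with continuity of its gradient gives the claimed $C^1$-regularity on $\R^V$.

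First I would establish the continuity of the extension. The side lengths $\tilde\ell_{12},\tilde\ell_{23},\tilde\ell_{31}$ in~\eqref{eq:tilde_ell_with_arsinh} are smooth functions of the arguments via $\arsinh$ and the exponential, and the angles $\alpha_1,\alpha_2,\alpha_3$ of a hyperbolic triangle depend continuously on its side lengths and degenerate to $0,0,\pi$, with the value $\pi$ attained by the angle opposite the over-long side, precisely as a triangle inequality is violated; this matches the prescribed ``broken-triangle'' values. Hence $\alpha_1,\alpha_2,\alpha_3$, the derived quantities $\alpha_{12},\alpha_{23},\alpha_{31}$ from~\eqref{eq:alpha_ij}, and the area term $\tfrac12(\pi-\alpha_1-\alpha_2-\alpha_3)$ are all continuous on $\R^V$. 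Since $\ML$ is continuous everywhere with $\ML(0)=\ML(\pi)=0$, formula~\eqref{eq:Vhathyp} exhibits $\Vhathyp$ as a sum of products of continuous functions, so $\Vhathyp$, and therefore $\Ehyp_{\T,\Theta,\lambda}$, is continuous.

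For the $C^1$-statement I would show that the gradient equals the angle vector throughout. In the interior of the domain I claim
\[
  \frac{\partial}{\partial u_i}\,\Ehyp_{\T,\Theta,\lambda}
  = \Theta_i - \sum_{jk:ijk\in T}\alpha_{jk}^{i},
\]
the hyperbolic counterpart of Propositions~\ref{prop:grad_f} and~\ref{prop:grad_E}, which amounts to $\partial(2\Vhathyp)/\partial\lambda_i=\alpha_i$ and is an instance of the Schläfli differential formula~\eqref{eq:dVhat} for the hyperbolic polyhedron whose volume $\Vhathyp$ computes. On the broken region the situation is even simpler: substituting the degenerate angles into~\eqref{eq:Vhathyp} and~\eqref{eq:alpha_ij} and using $\ML(0)=\ML(\pi)=0$, the function $2\Vhathyp$ collapses to the linear expression $\pi(\lambda_{i^{*}}+\lambda_{e^{*}})$, where $i^{*}$ is the vertex and $e^{*}$ the edge associated with the over-long side, and its $\lambda_i$-gradient is again $(\alpha_i)$ with the degenerate values. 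Thus the gradient of $\Ehyp_{\T,\Theta,\lambda}$ is given everywhere by the continuous angle-sum expression above; a continuous function that is differentiable off the (measure-zero) domain boundary with continuously extending gradient is $C^1$, which is the assertion.

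The main obstacle is the interior cancellation identity behind $\partial(2\Vhathyp)/\partial\lambda_i=\alpha_i$. In the euclidean case this rested on the single circumradius relation $x-\log(2\sin\alpha)=\log R$; here one must control seven angle terms (three triangle angles, the three $\alpha_{ij}$, and the area term), so the clean route is to invoke Schläfli's formula through the volume interpretation of $\Vhathyp$ rather than to differentiate~\eqref{eq:Vhathyp} by brute force. A secondary point that deserves care is that $\ML'(x)=-\log|2\sin x|$ has a logarithmic singularity at integer multiples of $\pi$, so the individual derivative terms blow up as the triangle degenerates; one must verify that these singular contributions telescope away, which is exactly what the reduction of the gradient to the bounded quantity $\alpha_i$ guarantees, so that no one-sided derivative fails to exist on the boundary.
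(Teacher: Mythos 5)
Your proposal is correct and follows essentially the same route as the paper: the interior gradient identity $d\Vhathyp=\tfrac{1}{2}\sum\alpha\,d\lambda$ is obtained from Schl\"afli's formula via the volume interpretation of $\Vhathyp$ (as the paper does in Section~\ref{sec:derive_by_volume}), and the $C^1$ extension then follows, as in Propositions~\ref{prop:f_extend} and~\ref{prop:E_extend}, from the continuity of the extended angle functions together with the fact that $\Vhathyp$ becomes linear (with matching gradient $(\pi,0,0)$) outside the triangle-inequality domain. Your explicit check that $2\Vhathyp$ collapses to $\pi(\lambda_{i^*}+\lambda_{e^*})$ on broken triangles, and your remark about the telescoping of the logarithmic singularities of $\ML'$, are correct elaborations of steps the paper leaves implicit.
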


\begin{proof}
  See Section~\ref{sec:derive_by_volume}.
\end{proof}

\begin{remark}
  To compute the angles $\alpha$, $\beta$, $\gamma$ in a hyperbolic
  triangle with side lengths $a$, $b$, $c$, one can use, for example,
  the hyperbolic cosine rule or the hyperbolic half-angle formula
  \begin{equation*}
    \tan\Bigg(\frac{\alpha}{2}\Bigg)=
    \sqrt{\frac{
        \sinh\big((a-b+c)/2\big)
        \sinh\big((a+b-c)/2\big)
      }{
        \sinh\big((-a+b+c)/2\big)
        \sinh\big((a+b+c)/2\big)
      }}\;.
  \end{equation*}
\end{remark}

\begin{proposition}
  \label{prop:grad_Ehyp}
  Let $\ell\in\R^{E}$, let $\lambda$ be defined by
  equation~\eqref{eq:lambda_hyp}, and suppose $u\in\R^{V}$ is in the
  domain where $\tilde\ell$ defined by
  equation~\eqref{eq:tilde_ell_hyp} satisfies all triangle
  inequalities. Then the partial derivative of
  $\Ehyp_{\T,\Theta,\lambda}$ with respect to $u_{i}$ is
  \begin{equation*}
    \frac{\partial}{\partial u_{i}}\, \Ehyp_{\T,\Theta,\lambda} = 
    \Theta_{i}-\sum_{jk:ijk\in T} \tilde\alpha_{jk}^{i}\,,
  \end{equation*}
  where $\tilde\alpha$ are the angles in the hyperbolic triangulation
  $(\T,\tilde\ell)_{h}$, and the sum is taken over all angles around vertex $i$.
\end{proposition}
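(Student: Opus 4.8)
The plan is to reduce everything, exactly as in the euclidean case (Propositions~\ref{prop:grad_E} and~\ref{prop:grad_f}), to a single statement about the building block $\Vhathyp$: namely that
\[
\frac{\partial}{\partial\lambda_{k}}\,2\Vhathyp(\lambda_{12},\lambda_{23},\lambda_{31},\lambda_{1},\lambda_{2},\lambda_{3}) = \alpha_{k}\qquad(k=1,2,3),
\]
where $\alpha_{k}$ is the angle of the hyperbolic triangle with side lengths~\eqref{eq:tilde_ell_with_arsinh}. Granting this, the proposition follows immediately: in $\Ehyp_{\T,\Theta,\lambda}$ (equation~\eqref{eq:Ehyp}) the variable $u_{i}$ occurs, for each triangle $ijk$ containing $i$, only through the argument $-u_{i}$ of the corresponding $\Vhathyp$, and linearly in $\Theta_{i}u_{i}$; the chain rule then gives $\partial\Ehyp/\partial u_{i}=\Theta_{i}-\sum_{jk:ijk\in T}\alpha^{i}_{jk}$, and $\alpha^{i}_{jk}=\tilde\alpha^{i}_{jk}$ is precisely the angle at $i$ in $(\T,\tilde\ell)_{h}$.

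It remains to prove the displayed derivative formula, and for this I would establish the full differential
\[
d\bigl(2\Vhathyp\bigr)=\alpha_{1}\,d\lambda_{1}+\alpha_{2}\,d\lambda_{2}+\alpha_{3}\,d\lambda_{3}+\alpha_{23}\,d\lambda_{23}+\alpha_{31}\,d\lambda_{31}+\alpha_{12}\,d\lambda_{12}.
\]
Writing $\delta=\tfrac12(\pi-\alpha_{1}-\alpha_{2}-\alpha_{3})$ so that $\alpha_{23}=\delta+\alpha_{1}$, $\alpha_{31}=\delta+\alpha_{2}$, $\alpha_{12}=\delta+\alpha_{3}$ by~\eqref{eq:alpha_ij}, the definition~\eqref{eq:Vhathyp} has the shape $2\Vhathyp=\sum_{A}\alpha_{A}\lambda_{A}+\sum_{A}\ML(\alpha_{A})$, the sums running over the seven angles $A\in\{\alpha_{1},\alpha_{2},\alpha_{3},\alpha_{23},\alpha_{31},\alpha_{12},\delta\}$ with conjugate length $\lambda_{A}$ (and $\lambda_{\delta}:=0$). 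Using $\ML'(x)=-\log|2\sin x|$, the desired formula is therefore equivalent to the closed identity $\sum_{A}\bigl(\lambda_{A}-\log|2\sin\alpha_{A}|\bigr)\,d\alpha_{A}=0$, that is, to the assertion that the ``volume part'' $\sum_{A}\ML(\alpha_{A})$ has differential $-\sum_{A}\lambda_{A}\,d\alpha_{A}$.

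I would prove this last identity by the hyperbolic-volume argument of Section~\ref{sec:hyperbolic_volume}, which is the whole point of the construction. I would identify the seven Lobachevsky terms as Milnor's formula for the volume of the decorated hyperbolic polyhedron obtained by coning the hyperbolic triangle $\alpha_{1}\alpha_{2}\alpha_{3}$ to an ideal apex (decomposed into ideal tetrahedra by its symmetry planes), whose dihedral angles are exactly the seven angles above and whose edge parameters are the signed horosphere distances $\lambda_{A}$. For this polyhedron Milnor's generalized Schl\"afli formula~\eqref{eq:dV} reads $d(\operatorname{Vol})=-\sum_{A}\lambda_{A}\,d\alpha_{A}$, which is precisely the required identity; the linear terms $\sum_{A}\alpha_{A}\lambda_{A}$ are the Legendre correction that turns $-\sum\lambda\,d\alpha$ into $+\sum\alpha\,d\lambda$, exactly as~\eqref{eq:hat_V} yields~\eqref{eq:dVhat} in the euclidean case.

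The main obstacle is this identification together with the clean application of Schl\"afli's formula in the mixed ideal/finite-vertex situation: one must name the polyhedron precisely, check that its ideal-tetrahedron decomposition reproduces the exact seven-term expression~\eqref{eq:Vhathyp} (including the lone $\ML(\delta)$), and match the horosphere-distance bookkeeping (Lemma~\ref{lem:c_length} and~\eqref{eq:pijk}) so that the edge parameters carry the correct signs and the factor of $\tfrac12$ in~\eqref{eq:dV} is accounted for. If one prefers to avoid three-dimensional geometry, the same identity can be obtained by pure hyperbolic trigonometry, mirroring the proof of Proposition~\ref{prop:grad_f}: group the seven angles by the vertex of the underlying tetrahedron to which they belong, show via the hyperbolic half-angle relations that $\lambda_{A}-\log|2\sin\alpha_{A}|$ is constant within each group (a signed horocyclic distance, as in~\eqref{eq:pijk}), and observe that the angles in each group sum to a constant, so that their differentials cancel group by group.
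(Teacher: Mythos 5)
Your formal reduction is exactly the paper's: everything is rewritten in terms of the building block $\Vhathyp$, the claimed derivative $d(2\Vhathyp)=\sum\alpha\,d\lambda$ is, after the Legendre-type rearrangement, equivalent to the statement that the Lobachevsky part $\sum_{A}\ML(\alpha_{A})=2\Vhyp$ has differential $-\sum_{A}\lambda_{A}\,d\alpha_{A}$, and this is to come from Schl\"afli's formula applied to a hyperbolic polyhedron whose volume is $\Vhyp$ and whose decorated edge lengths are the $\lambda$'s. The chain-rule step through the arguments $-u_{i}$ and the identification of the resulting angles with $\tilde\alpha^{i}_{jk}$ are fine.

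The gap sits precisely in the step you flag as the ``main obstacle'': the identification of the polyhedron. The cone over the hyperbolic triangle with an ideal apex is not the right object --- such a tetrahedron has three finite and one ideal vertex, cannot be decomposed into ideal tetrahedra, and, worse, its dihedral angles along the finite base edges are not constant, so Schl\"afli would contribute unwanted terms $\tilde\ell_{ij}\,d(\cdot)$ involving the finite edge lengths. The paper's building block (Figure~\ref{fig:ideal_halfprism}) is the prism spanned by the vertices $p_{1},p_{2},p_{3}$ of the hyperbolic triangle and the three ideal endpoints $v_{1},v_{2},v_{3}$ of the rays through the $p_{i}$ \emph{perpendicular} to its plane. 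The perpendicularity is what makes the dihedral angles at the three finite base edges constantly $\pi/2$, so that these edges drop out of Schl\"afli's formula and only the six decorated infinite edges remain; it also makes the dihedral angles $\alpha_{i}$ at the vertical edges equal to the triangle angles, with Lemma~\ref{lem:edge_lengths} supplying the length relations~\eqref{eq:tilde_ell_with_arsinh}. Moreover, the seven-term volume formula~\eqref{eq:Vhyp} for this prism --- including the lone term $\ML\big(\tfrac{1}{2}(\pi-\alpha_{1}-\alpha_{2}-\alpha_{3})\big)$ and the overall factor $\tfrac{1}{2}$ --- is not something to be taken for granted: it is Leibon's theorem, which the paper cites rather than proves. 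Your fallback ``pure trigonometry'' route is only a sketch, and the seventh angle belongs to no ideal vertex and has no conjugate $\lambda$, so the group-by-group cancellation you describe does not go through without an additional argument.
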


\begin{proof}
  See Section~\ref{sec:derive_by_volume}.
\end{proof}

\begin{proposition}
  \label{prop:E_convex_hyp}
  The function $\Ehyp_{\T,\Theta,\lambda}(u)$ is convex on $\R^{V}$ and
  locally strictly convex in the domain where $\tilde\ell$ defined by
  equation~\eqref{eq:tilde_ell_hyp} satisfies all triangle
  inequalities. 
\end{proposition}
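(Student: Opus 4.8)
The plan is to reduce convexity to a single-triangle positivity statement and then to build the global claim out of the $C^{1}$ extension furnished by Proposition~\ref{prop:E_extend_hyp}. Up to the affine term $\sum_{i\in V}\Theta_i u_i$, the function $\Ehyp_{\T,\Theta,\lambda}$ is a sum over triangles of the blocks $2\Vhathyp(\lambda_{ij},\lambda_{jk},\lambda_{ki},-u_i,-u_j,-u_k)$, and since affine terms and finite sums preserve convexity it is enough to control each block. By Proposition~\ref{prop:grad_Ehyp}, in the open region where all triangle inequalities hold one has $\partial\Ehyp_{\T,\Theta,\lambda}/\partial u_i=\Theta_i-\sum_{jk:ijk\in T}\tilde\alpha_{jk}^{i}$, so the second derivative is the quadratic form $-\sum_{ijk\in T}\big(d\tilde\alpha_{jk}^{i}\,du_i+d\tilde\alpha_{ki}^{j}\,du_j+d\tilde\alpha_{ij}^{k}\,du_k\big)$. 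Hence it suffices to show that, for a single hyperbolic triangle with angles $\alpha_1,\alpha_2,\alpha_3$ at its vertices and side lengths rescaled by~\eqref{eq:tilde_ell_hyp}, the form $Q=-(d\alpha_1\,du_1+d\alpha_2\,du_2+d\alpha_3\,du_3)$ is positive definite on all of $\R^{3}$. Granting this, local strict convexity of the full function follows: each per-triangle form, regarded on $\R^{V}$, is positive semidefinite and vanishes only when the three vertex-coordinates of that triangle vanish, so for any nonzero $du$ a vertex with $du_i\neq0$ lies in a triangle whose contribution is strictly positive, and the sum is positive definite.

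To evaluate $Q$ I would introduce $\nu_a=\tfrac12\tilde\lambda$ for the side opposite vertex $a$ (with $\tilde\lambda$ as in~\eqref{eq:lambda_hyp}), so that by~\eqref{eq:tilde_lambda} these satisfy exactly the euclidean relations $d\nu_1=\tfrac12(du_2+du_3)$ and cyclically, with inverse $du_1=-d\nu_1+d\nu_2+d\nu_3$. Differentiating the hyperbolic law of cosines $\cosh L_1=\cosh L_2\cosh L_3-\sinh L_2\sinh L_3\cos\alpha_1$, in the same manner as the euclidean cosine rule was differentiated in the proof of Proposition~\ref{prop:hess_f}, expresses each $d\alpha_a$ linearly in the $d\nu_b$ with coefficients assembled from the side lengths and angles; substituting and simplifying by the hyperbolic sine rule should produce a hyperbolic analogue of the euclidean cotangent form, namely the expression $\cot\alpha_1(du_2-du_3)^2+\cot\alpha_2(du_3-du_1)^2+\cot\alpha_3(du_1-du_2)^2$ of Proposition~\ref{prop:hess_f} plus a strictly positive correction. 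That such a correction must appear is exactly the geometric difference between the two theories: the euclidean block $f$ is scale invariant, which produced the kernel direction $(1,1,1)$, whereas hyperbolic geometry carries an intrinsic scale, so adding a constant to $(u_1,u_2,u_3)$ genuinely deforms the triangle. Establishing positivity of this correction---equivalently, that $Q$ is definite on all of $\R^{3}$ rather than only modulo constants---is the crux of the proposition and the step I expect to be the main obstacle; conceptually it is the infinitesimal form of the strict concavity of the volume of the associated hyperbolic tetrahedron in its dihedral angles, which is also the mechanism exploited in Section~\ref{sec:derive_by_volume}.

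It remains to pass from local strict convexity on the good region to convexity on all of $\R^{V}$. On a ``broken'' region the triangle's angles are frozen at $(0,0,\pi)$; by~\eqref{eq:alpha_ij} the angles $\alpha_{12},\alpha_{23},\alpha_{31}$ then take the values $\pi,0,0$ in the appropriate order, and every Lobachevsky term vanishes because $\ML(0)=\ML(\pi)=0$, so $2\Vhathyp$ collapses to an affine function of the $\lambda$'s (of the form $\pi\lambda_3+\pi\lambda_{12}$) and hence an affine function of $u$. Thus $\Ehyp_{\T,\Theta,\lambda}$ is convex on the closure of each region, and by Proposition~\ref{prop:E_extend_hyp} it is continuously differentiable on $\R^{V}$. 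Restricting to an arbitrary line, one obtains a $C^{1}$ function of one variable that is convex on each of finitely many intervals; since its derivative is continuous it is globally nondecreasing, so the function is convex along every line, and therefore convex on $\R^{V}$. Together with the positivity of $Q$ this proves both assertions.
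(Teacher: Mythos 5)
Your reduction to a per-triangle quadratic form $Q=-(d\alpha_1\,du_1+d\alpha_2\,du_2+d\alpha_3\,du_3)$ and your gluing argument across the ``broken'' regions (affine pieces, $C^1$ matching, restriction to lines) are both sound, and the latter is in fact spelled out more carefully than in the paper. But the proof has a hole exactly where you say it does: the positive definiteness of $Q$ on all of $\R^3$ is never established, only anticipated. Worse, the anticipated form of the answer is not right. The actual Hessian (Proposition~\ref{prop:hess_Ehyp}) is
\begin{equation*}
  \tfrac{1}{2}\sum \cot(\alpha_{jk})\Big((du_j-du_k)^2
  +\tanh^2\big(\tfrac{\tilde\ell_{jk}}{2}\big)(du_j+du_k)^2\Big),
\end{equation*}
where the weights are cotangents of the \emph{ideal-edge} dihedral angles $\alpha_{jk}=\tfrac12(\pi+\alpha_i-\alpha_j-\alpha_k)$ of the prism, not of the triangle angles themselves; since $\alpha_{jk}$ can exceed $\pi/2$, these weights can be negative, so the expression is not ``the euclidean cotangent form plus a manifestly positive correction,'' and positivity cannot be read off termwise. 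A direct proof along your lines would require a genuine argument (the analogue of the $2\times2$ determinant computation in Proposition~\ref{prop:f_convex}, now for a full-rank $3\times3$ form), which you have not supplied.

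The paper closes this gap without computing the Hessian at all, by the mechanism you mention only as a conceptual aside. One observes that $\Vhathyp-\tfrac{\pi}{4}(\lambda_{12}+\lambda_{23}+\lambda_{31})$ depends on $(\lambda_1,\lambda_2,\lambda_3)$ only through the three quantities $x_a=\partial\Vhyp/\partial\alpha_a$, which are invertible affine functions of the $\lambda_a$ (hence of the $u_a$), and that as a function of $(x_1,x_2,x_3)$ it is minus the Legendre transform of Leibon's volume function $\Vhyp(\alpha_1,\alpha_2,\alpha_3)$ of the prism of Figure~\ref{fig:ideal_halfprism} (note: the prism with three ideal vertices, not a tetrahedron). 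Leibon's theorem that $\Vhyp$ is \emph{strictly} concave on its angle simplex then yields local strict convexity of $\Vhathyp$ in $(x_1,x_2,x_3)$, hence in $(u_1,u_2,u_3)$, because the Hessians of a function and of its Legendre transform are inverse to each other. If you replace your attempted cosine-rule computation by this duality argument (importing Leibon's concavity as the external input, just as the paper does), the rest of your write-up goes through and completes the proof.
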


\begin{proof}
  See Section~\ref{sec:derive_by_volume}.
\end{proof}

Consider the discrete conformal mapping problems for hyperbolic
triangulations that are analogous to those for euclidean
triangulations described in
Section~\ref{sec:mapping_problems}. Propositions~\ref{prop:grad_Ehyp}
and~\ref{prop:E_convex_hyp} imply the following hyperbolic version of
Theorem~\ref{thm:dconfmap_unique}.

\begin{theorem}
  If the discrete mapping problems for hyperbolic triangulations have a
  solution, it is unique and can be found by minimizing
  $\Ehyp_{\T,\Theta,\lambda}(u)$.
\end{theorem}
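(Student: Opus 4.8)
The plan is to imitate the proof of the euclidean Theorem~\ref{thm:dconfmap_unique}, with the ingredients $E_{\T,\Theta,\lambda}$, its gradient, and its convexity replaced by their hyperbolic counterparts. First I would record that the hyperbolic mapping problems are variational in exactly the same sense as in the euclidean case: a discrete metric $\tilde\ell$ in the given conformal class (obtained from a representative via~\eqref{eq:tilde_ell_hyp}) solves the problem precisely when the angle-sum equations $\sum_{jk:ijk\in T}\tilde\alpha_{jk}^{i}=\Theta_{i}$ hold at the relevant vertices. By Proposition~\ref{prop:grad_Ehyp} the partial derivative of $\Ehyp_{\T,\Theta,\lambda}$ with respect to $u_{i}$ equals $\Theta_{i}-\sum_{jk:ijk\in T}\tilde\alpha_{jk}^{i}$, so these equations say exactly that $u$ is a critical point of $\Ehyp_{\T,\Theta,\lambda}$ --- with the variables $u_{i}$, $i\in V_{0}$, held fixed in the analogue of Problem~\ref{prob:general}. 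Thus solutions of the mapping problem correspond bijectively to critical points of the extended function.

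Next I would invoke convexity to turn ``critical point'' into ``minimizer''. By Proposition~\ref{prop:E_extend_hyp} the function $\Ehyp_{\T,\Theta,\lambda}$ is continuously differentiable on all of $\R^{V}$, and by Proposition~\ref{prop:E_convex_hyp} it is convex there. For a convex $C^{1}$ function any critical point is a global minimizer, and restricting to the affine subspace $\{u_{i}=\const \text{ for } i\in V_{0}\}$ preserves convexity, so the same holds for the restricted problem. Hence a solution of the mapping problem, should it exist, is found by minimizing $\Ehyp_{\T,\Theta,\lambda}$, which is the computational content of the statement; as in the euclidean case, one then reads off from the location of the minimizer whether an actual solution exists.

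For uniqueness I would exploit the \emph{local strict} convexity asserted in Proposition~\ref{prop:E_convex_hyp}: on the open domain where all triangle inequalities for $\tilde\ell$ hold --- precisely where any genuine solution lives --- $\Ehyp_{\T,\Theta,\lambda}$ is strictly convex on a neighborhood of each point. Suppose $u^{*}$ and $u^{**}$ were two solutions, hence two global minimizers lying in this domain. Restricting $\Ehyp_{\T,\Theta,\lambda}$ to the segment joining them yields a convex function of one variable equal to the common minimum at both endpoints; by global convexity it is then constant on the whole segment. But near $u^{*}$ this restriction is strictly convex, hence cannot be constant on any subinterval --- a contradiction unless $u^{*}=u^{**}$. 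The same argument applies after restricting to the affine subspace fixing $u|_{V_{0}}$.

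The point to emphasize --- and the only place where the hyperbolic statement genuinely differs from its euclidean model --- is that here there is \emph{no} residual scale ambiguity. In the euclidean case the identity $f(x+h,y+h,z+h)=f(x,y,z)+\pi h$ produced a one-dimensional kernel of constants in the Hessian of $E_{\T,\Theta,\lambda}$, so uniqueness held only up to global scaling. The hyperbolic building block has no such homogeneity, because $\sinh$ is not homogeneous, and correspondingly Proposition~\ref{prop:E_convex_hyp} upgrades local convexity to \emph{strict} local convexity on the feasible domain. This strictness is the sole substantive ingredient needed for the unconditional uniqueness; the hard part is thus entirely absorbed into Proposition~\ref{prop:E_convex_hyp}, and once it is granted the argument above is purely formal.
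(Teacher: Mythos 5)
Your proof is correct and follows exactly the route the paper intends: the paper gives no separate argument but simply states that the theorem follows from the gradient formula (Proposition~\ref{prop:grad_Ehyp}) together with the convexity/Hessian statements (Propositions~\ref{prop:E_convex_hyp}, \ref{prop:E_extend_hyp}, \ref{prop:hess_Ehyp}), which is precisely the chain you spell out. Your closing observation that the absence of scale ambiguity comes from the strict local convexity (no kernel of constants, unlike the euclidean $f(x+h,y+h,z+h)=f(x,y,z)+\pi h$) matches the paper's remark that ``there is never any ambiguity of scale.''
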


\noindent%
The following relatively simple explicit formula for the
second derivative facilitates the numerical minimization of
$\Ehyp_{\T,\Theta,\lambda}$.

\begin{proposition}
  \label{prop:hess_Ehyp}
  The second derivative of $\Ehyp_{\T,\Theta,\lambda}$ at $u$ is
  \begin{multline}
    \sum_{i,j\in V}\frac{\partial^{2}\Ehyp_{\T,\Theta,\lambda}}{\partial
      u_{i}\partial u_{j}} 
    \,du_{i}\,du_{j}
    = \\
    \frac{1}{2}\,\sum_{ij\in E}w_{ij}(u)\Big(
    (du_{i}-du_{j})^{2}
    +\tanh^{2}\big(\tfrac{\tilde\ell_{ij}}{2}\big)
    (du_{i}+du_{j})^{2}
    \Big),
  \end{multline}
  with $\tilde\ell$ defined by equation~\eqref{eq:tilde_ell_hyp}
  and
  \begin{equation}
    w_{ij}(u)=\frac{1}{2}\Big(
    \cot\big(
    \tfrac{1}{2}(
    \pi-\tilde\alpha_{jk}^{i}-\tilde\alpha_{ki}^{j}+\tilde\alpha_{ij}^{k})
    \big)
    +\cot\big(
    \tfrac{1}{2}(
    \pi-\tilde\alpha_{il}^{j}-\tilde\alpha_{lj}^{i}+\tilde\alpha_{ji}^{l})
    \big)
    \Big)
  \end{equation}
  for interior edges $ij$ with opposite vertices $k$ and $l$ if
  $\tilde\ell$ satisfies the triangle inequalities for $ijk$ and $jil$
  (so that the corresponding angles $\tilde\alpha$ are positive and
  smaller than $\pi$). If $ij$ is a boundary edge, there is only one
  cotangent term. For ``broken'' triangles, replace the three
  corresponding cotangent terms with $0$.
\end{proposition}

\noindent%
We omit the proof, which consists of a lengthy but elementary
calculation.

\begin{remark}
  \label{rem:euclidean_and_hyperbolic}
  When are a euclidean and a hyperbolic triangulation discretely
  conformally equivalent?
  We propose the following definition:
  A euclidean triangulation $(\T,\ell)$ and a hyperbolic triangulation
  $(\T,\tilde\ell)_{h}$ are \emph{discretely conformally equivalent}
  if $\ell$ and $\tilde\ell$ are related by
  \begin{equation}
    \label{eq:tilde_ell_euc_hyp}
    \sinh\frac{\tilde\ell_{ij}}{2}=e^{\frac{1}{2}(u_{i}+u_{j})}\ell_{ij}
  \end{equation}
  for some function $u\in\R^{V}$.

This is based on the following interpretation of
equations~\eqref{eq:lambda_hyp} and~\eqref{eq:tilde_lambda}. Consider
the hyperboloid model of the hyperbolic plane, 
$
  H^{2}=\big\{x\in\R^{2,1}\,\big|\,\langle x,x\rangle=-1,\;x_{3}>0\big\},
$
where $\langle\cdot,\cdot\rangle$ denotes the indefinite scalar
product
$
  \langle x,y\rangle=x_{1}y_{1}+x_{2}y_{2}-x_{3}y_{3},
$
and the hyperbolic distance $d_{h}(x,y)$ between two points $x,y\in
H^{2}$ satisfies
\begin{equation*}
  \cosh d_{h}(x,y) = -\langle x,y\rangle.
\end{equation*}
This implies
\begin{equation*}
  \|x-y\|_{h}=2\sinh\Big(\tfrac{1}{2}\,d_{h}(x,y)\Big),
\end{equation*}
where $\|v\|_{h}=\sqrt{\langle v,v\rangle}$. 
To every  hyperbolic triangle in $H^{2}$ with sides of length
$\ell_{12}$, $\ell_{23}$, $\ell_{31}$, there corresponds a
\emph{secant triangle} in $\R^{2,1}$ whose sides are the straight line
segments in $\R^{2,1}$ connecting the vertices. Their lengths, as
measured by $\|\cdot\|_{h}$, are therefore $2\sinh(\ell_{ij}/2)$.

Note that the following statements are equivalent: 

\begin{compactenum}[(i)]
\item The restriction of the indefinite scalar product
  $\langle\cdot,\cdot\rangle$ of $\R^{2,1}$ to the affine plane of the
  secant triangle is positive definite and therefore induces a
  euclidean metric on that plane. 
\item The side lengths of the secant
  triangle satisfy the triangle inequalities. 
\item The circumcircle
  of the hyperbolic triangle is a proper circle. (The circumcircle of
  a hyperbolic triangle is either a proper circle or a horocycle or a
  curve of constant distance from a geodesic.)
\end{compactenum}
(The analogous statement for the
secant triangles of decorated ideal hyperbolic triangles is Penner's
Lemma~2.2 \cite{penner_decorated_1987}.)

Now let $(\T,\ell)_{h}$ and $(\T,\tilde\ell)_{h}$ be two hyperbolic
triangulations. The edge lengths of the secant triangles of
$(\T,\ell)_{h}$ are $2e^{\lambda/2}$ with $\lambda$ defined by
equation~\eqref{eq:lambda_hyp}, and similarly for
$(\T,\tilde\ell)_{h}$. Now $(\T,\ell)_{h}$ and $(\T,\tilde\ell)_{h}$
are discretely conformally equivalent if and only if
$\ell'=e^{\lambda/2}$ and $\tilde \ell'=e^{\tilde\lambda/2}$ are
related by equation~\eqref{eq:tilde_ell}, that is, related like
discrete metrics of discretely conformally equivalent euclidean
triangulations.
\end{remark}

\subsection{Derivation by volume considerations}
\label{sec:derive_by_volume}

The theory of discrete conformal equivalence for hyperbolic
triangulations is based on volume considerations for the type of
polyhedron shown in Figure~\ref{fig:ideal_halfprism}.
\begin{figure}
  \centering
  \includegraphics{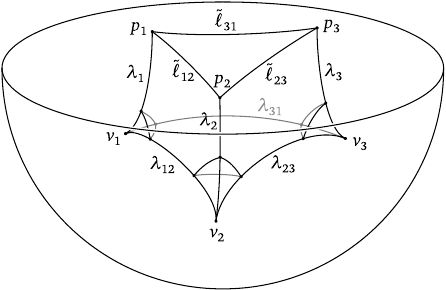}
  \caption{The polyhedral building block (shown in the Poincar\'e ball
    model) used to derive the theory of discrete conformal equivalence
    of hyperbolic triangulations.}
  \label{fig:ideal_halfprism}
\end{figure}
From the vertices $p_{1}$, $p_{2}$, $p_{3}$ of a triangle in
hyperbolic $3$-space, three rays run orthogonally to the plane of
the triangle until they intersect the infinite boundary in the ideal
points $v_{1}$, $v_{2}$, $v_{3}$. The convex hull of these six points
is a prism with three ideal vertices and right dihedral angles at the
base triangle $p_{1}p_{2}p_{3}$. Let the dihedral angles at the three rays from $p_{1}$,
$p_{2}$, $p_{3}$ be $\alpha_{1}$, $\alpha_{2}$, $\alpha_{3}$.  Since
the dihedral angles sum to $\pi$ at the ideal vertices, the dihedral
angles $\alpha_{12}$, $\alpha_{23}$, $\alpha_{31}$ at edges
$v_{1}v_{2}$, $v_{2}v_{3}$, $v_{3}v_{1}$ satisfy
equations~\eqref{eq:alpha_ij}.  Let $\tilde\ell_{ij}$ be the lengths
of the finite edges, and let $\lambda_{i}$ and $\lambda_{ij}$ be the
lengths of the infinite edges truncated at some horospheres centered
at the ideal vertices $v_{i}$, as shown in
Figure~\ref{fig:ideal_halfprism}.

\begin{lemma}[Leibon~\cite{leibon_characterizingdelaunay_2002}]
  \label{lem:edge_lengths}
  The (truncated) edge lengths of the prism shown in
  Figure~\ref{fig:ideal_halfprism} are related by
  equations~\eqref{eq:tilde_ell_with_arsinh}.
\end{lemma}

\begin{proof}
  We consider the case when $\lambda_{1}=\lambda_{2}=\lambda_{3}=0$
  (that is, when the truncating horospheres touch the base plane in
  $p_{1}$, $p_{2}$, $p_{3}$), from which the general case follows
  easily. Figure~\ref{fig:proof_lem_edge_lengths_and_angle_of_parallelity}
  (left)
  \begin{figure}
    \centering
    \hfill
    \includegraphics{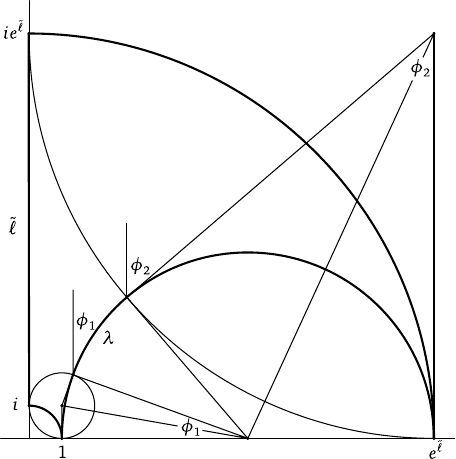}
    \hfill
    \raisebox{1\baselineskip}{\includegraphics{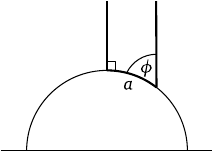}}
    \hspace*{\fill}
    \caption{\emph{Left:} Proof of Lemma~\ref{lem:edge_lengths}.
    \emph{Right:} The angle $\phi$ and the side length $a$ in a right angled
      hyperbolic triangle with an ideal vertex satisfy the equation
      $a=\log\cot(\phi/2)$.}
    \label{fig:proof_lem_edge_lengths_and_angle_of_parallelity}
  \end{figure}
 shows one of the
  side quadrilaterals of the prism in the half-plane model. 
  We will show that
  \begin{equation}
    \label{eq:lambda_in_proof_lem_edge_lengts}
    \lambda = 2\log\sinh\big(\frac{\tilde\ell}{2}\big),
  \end{equation}
  which proves this special case. We have
  \begin{equation*}
    \phi_{1}=2\arccot\Big(\frac{e^{\tilde\ell}-1}{2}\Big),\qquad
    \phi_{2}=2\arccot\Big(\frac{2e^{\tilde\ell}}{e^{\tilde\ell}-1}\Big),
  \end{equation*}
  The equation for the ``angle of parallelity'' (see
  Figure~\ref{fig:proof_lem_edge_lengths_and_angle_of_parallelity}
  (right))
  implies that
  \begin{equation*}
    \lambda=\log\cot(\phi_{1}/2)-\log\cot(\phi_{2}/2),
  \end{equation*}
  and hence equation~\eqref{eq:lambda_in_proof_lem_edge_lengts} holds.
\end{proof}

The volume of the polyhedron shown in Figure~\ref{fig:ideal_halfprism}
is
\begin{multline}
  \label{eq:Vhyp}
  \Vhyp(\alpha_{1},\alpha_{2},\alpha_{3})=\frac{1}{2}\Big(
  \ML(\alpha_{1}) + \ML(\alpha_{2}) + \ML(\alpha_{3})
  + \ML(\alpha_{12}) + \ML(\alpha_{23}) + \ML(\alpha_{31})\\
  +\ML\Big(
  \tfrac{1}{2}\big(\pi-\alpha_{1}-\alpha_{2}-\alpha_{3}\big)
  \Big)\Big).
\end{multline}
This was shown by Leibon~\cite{leibon_characterizingdelaunay_2002},
who also showed that the volume function $\Vhyp$ is strictly concave
on its domain of definition,
\begin{equation*}
\big\{
(\alpha_{1},\alpha_{2},\alpha_{3})\in\R^{3}
\,\big|\,
\alpha_{1}>0,\;
\alpha_{2}>0,\;
\alpha_{3}>0,\;
\alpha_{1}+\alpha_{2}+\alpha_{3}<\pi
\big\}.
\end{equation*}
By Schl\"afli's formula, 
\begin{equation*}
  \label{eq:dVhyp}
  d\Vhyp=-\tfrac{1}{2}\big(
  \lambda_{1}\,d\alpha_{1}+
  \lambda_{2}\,d\alpha_{2}+
  \lambda_{3}\,d\alpha_{3}+
  \lambda_{12}\,d\alpha_{12}+
  \lambda_{23}\,d\alpha_{23}+
  \lambda_{31}\,d\alpha_{31}\big).
\end{equation*}
(The choice of horospheres does not matter because the angle sum at
the ideal vertices is constant. Also note that the lengths
$\tilde\ell_{ij}$ of the finite edges do not appear in the equation
because their dihedral angles are constant.)

The function $\Vhathyp$ defined by
equation~\eqref{eq:Vhathyp} is
\begin{multline*}
  \Vhathyp(\lambda_{12},\lambda_{23},\lambda_{31}, 
  \lambda_{1}, \lambda_{2}, \lambda_{3})=\\
  \tfrac{1}{2}(
  \alpha_{1}\lambda_{1}
  +\alpha_{2}\lambda_{2}
  +\alpha_{3}\lambda_{3}
  +\alpha_{12}\lambda_{12}
  +\alpha_{23}\lambda_{23}
  +\alpha_{31}\lambda_{31})
  + \Vhyp(\alpha_{1},\alpha_{2},\alpha_{3}),
\end{multline*}
so that 
\begin{equation*}
  d\Vhathyp=\tfrac{1}{2}\big(
  \alpha_{1}\,d\lambda_{1}+
  \alpha_{2}\,d\lambda_{2}+
  \alpha_{3}\,d\lambda_{3}+
  \alpha_{12}\,d\lambda_{12}+
  \alpha_{23}\,d\lambda_{23}+
  \alpha_{31}\,d\lambda_{31}\big).
\end{equation*}
From this one obtains Proposition~\ref{prop:grad_Ehyp} on the partial
derivatives of $\Ehyp_{\T,\Theta,\lambda}$. By extending $\Vhathyp$
using essentially the same argument as in the proof of
Proposition~\ref{prop:E_extend}, one obtains
Proposition~\ref{prop:E_extend_hyp} on the extension of
$\Ehyp_{\T,\Theta,\lambda}$. To prove the convexity of
$\Ehyp_{\T,\Theta,\lambda}$ (Proposition~\ref{prop:E_convex_hyp}),
note that the function
\begin{equation*}
  \Vhathyp(\lambda_{12},\lambda_{23},\lambda_{31}, 
  \lambda_{1}, \lambda_{2}, \lambda_{3})
  -\tfrac{\pi}{4}\big(
  \lambda_{12}+\lambda_{23}+\lambda_{31}\big)
\end{equation*}
really only depends on the three parameters
\begin{equation*}
  \begin{split}
    x_{1} &= \tfrac{1}{4} 
    \big(
    +\lambda_{12} - \lambda_{23} + \lambda_{31} - 2\lambda_{1}
    \big)
    =\frac{\partial\Vhyp}{\partial\alpha_{1}},\\
    x_{2} &= \tfrac{1}{4} 
    \big(
    +\lambda_{12} + \lambda_{23} - \lambda_{31} - 2\lambda_{2}
    \big)
    =\frac{\partial\Vhyp}{\partial\alpha_{2}},\\
    x_{3} &= \tfrac{1}{4} 
    \big(
    - \lambda_{12} + \lambda_{23} + \lambda_{31} - 2\lambda_{3}
    \big)
    =\frac{\partial\Vhyp}{\partial\alpha_{3}},\\
  \end{split}
\end{equation*}
As function of these parameters, it is minus the Legendre transform of the
strictly concave function~$\Vhyp$:
\begin{equation*}
  \Vhathyp
  -\tfrac{\pi}{4}\big(\lambda_{12}+\lambda_{23}+\lambda_{31}\big)
  =
  -\alpha_{1}x_{1}-\alpha_{2}x_{2}-\alpha_{3}x_{3}+\Vhyp. 
\end{equation*}
Hence, $\Vhathyp$ is a locally strictly convex function of $x_{1}$,
$x_{2}$, $x_{3}$, and hence also of $\lambda_{1}$, $\lambda_{2}$,
$\lambda_{3}$, if $\lambda_{12}$, $\lambda_{23}$,
$\lambda_{31}$ are considered constant. The $C^{1}$ extension of
$\Vhathyp$ is linear outside the domain where the triangle
inequalities are satisfied, hence still convex.

\begin{remark}
  In the same way, one can derive a theory of discrete conformal
  equivalence for \emph{spherical} triangulations. In this case, the
  polyhedral building block is a tetrahedron with one finite and three
  ideal vertices. The functions involved in the corresponding
  variational principles are not convex. So in this case, the
  variational principles do not immediately lead to a uniqueness
  theorem, nor to a computational method for discrete conformal maps.
\end{remark}

\begin{appendices}

\section{Necessary conditions for the existence of a solution of the
  discrete conformal mapping problems}
\label{sec:nec_cond_exist}

In this appendix, we will discuss some rather obvious and rather mild
necessary conditions for the solvability of the discrete mapping
problems and how they relate to the behavior of the function
$E_{\T,\Theta,\lambda}(u)$. In short, the conditions are necessary for
the problems to have a solution and sufficient to ensure that
$E_{\T,\Theta,\lambda}(u)$ behaves ``sanely'', so that the following
solvability alternative (see the corollary to
Proposition~\ref{prop:E_tends_to_infty}) holds: Provided that we are
able to find a minimizer of a convex function if it exists, then the
variational principle allows us to either solve a discrete conformal
mapping problem or to ascertain that it is not solvable.

\subsection{The discrete Gauss-Bonnet condition}
\label{sec:discrete_Gauss_Bonnet}

\begin{condition}
  \label{cond:sum_theta_equality}
  \quad$\sum_{i\in V}\Theta_{i} = \pi\,|T|\,$
\end{condition}

\noindent%
If Problem~\ref{prob:prescribe_Theta} has a solution then clearly
Condition~\ref{cond:sum_theta_equality} is satisfied (because the sum
of angle sums around vertices equals the sum of angle sums in
triangles). This is actually a discrete version of the Gauss-Bonnet
formula. If we set $K_{i} = 2\pi-\Theta_{i}$ for interior vertices and
$\kappa_{i}=\pi-\Theta_{i}$ for boundary vertices then
Condition~\ref{cond:sum_theta_equality} is equivalent to
\begin{equation*}
  \sum_{i\in \Vint}K_{i} + \sum_{i\in\Vbdy}\kappa_{i}=2\pi(|T|-|E|+|V|)).
\end{equation*}

\begin{proposition}
  \label{prop:scale_invariance}
  The function $E_{\T,\Theta,\lambda}(u)$ is scale-invariant, that is,
  \begin{equation*}
    E_{\T,\Theta,\lambda}(u + h\,1_{V})=E_{\T,\Theta,\lambda}(u), 
  \end{equation*}
  if and only if Condition~\ref{cond:sum_theta_equality} is satisfied.
\end{proposition}

\begin{proof}
  Adding $h$ to every $u_{i}$ results in an added $2h$ to every
  $\tilde\lambda_{ij}$, see equation \eqref{eq:tilde_lambda}. Using
  equations~\eqref{eq:f_scaling_behavior} and~\eqref{eq:E_with_f}, one
  obtains
  \begin{equation*}
    E_{\T,\Theta,\lambda}(u + h\,1_{V}) = E_{\T,\Theta,\lambda}(u) 
    + h\,\Big(\sum_{i\in V}\Theta_{i}-\pi\,|T|\Big).\qedhere
  \end{equation*}
\end{proof}

\subsection{The solvability alternative}
\label{sec:solvability_alternative}

The following stronger Conditions~\ref{cond:angle_system}
and~\ref{cond:subsets} are also obviously necessary for the existence
of a solution of Problem~\ref{prob:prescribe_Theta}. Moreover, if a
solution to the general Problem~\ref{prob:general} exists (where
$\Theta_{i}$ is prescribed only for $i\in V_{1}$), then positive
$\Theta$-values can be assigned also to the vertices in $V_{0}$ so
that Conditions~\ref{cond:angle_system} and~\ref{cond:subsets} are
satisfied.

\begin{condition}
  \label{cond:angle_system}
  There exists a system of angles $\widehat\alpha>0$, such that
  \begin{equation}
    \label{eq:coherent_angle_sum_triangle}
    \widehat\alpha_{jk}^{i}+\widehat\alpha_{ki}^{j}+\widehat\alpha_{ij}^{k}=\pi 
    \quad\text{for all}\quad ijk\in T,
  \end{equation}
  and
  \begin{equation*}
    \sum_{jk:ijk\in T}\widehat\alpha_{jk}^{i}=\Theta_{i}
    \quad\text{for all}\quad i\in V.
  \end{equation*}
\end{condition}

\begin{condition}
  \label{cond:subsets}
  If $T_{1}$ is any subset of $T$ and $V_{1}\subseteq V$ is the set of
  all vertices of the triangles in $T_{1}$, that is,
  \begin{equation*}
    V_{1}=\bigcup_{ijk\in T_{1}}\{i,j,k\}, 
  \end{equation*}
  then
  \begin{equation*}
    \pi\,\big|T\setminus T_1\big| \geq \sum_{i\in V\setminus V_{1}}\Theta_{i},
  \end{equation*}
  where equality holds if and only if $T_{1}=\emptyset$ or $T_{1}=T$.
\end{condition}

\begin{proposition}
  \label{prop:conditions_equivalent}
  Conditions~\ref{cond:angle_system} and~\ref{cond:subsets} are equivalent.
\end{proposition}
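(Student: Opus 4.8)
The plan is to read both conditions as statements about a single transportation (network-flow) problem and then apply a Hall-type / max-flow--min-cut argument, using the equality clause of Condition~\ref{cond:subsets} to force \emph{strict} positivity. Form the bipartite graph whose two vertex classes are the triangles $T$ and the vertices $V$, with one edge for each corner $\begin{smallmatrix}i\\jk\end{smallmatrix}$ joining triangle $ijk$ to vertex $i$; give each triangle supply $\pi$ and each vertex $i$ demand $\Theta_i$. A system of angles as in Condition~\ref{cond:angle_system} is exactly a flow $\widehat\alpha$ that is strictly positive on every edge and has these marginals, so Condition~\ref{cond:angle_system} is the solvability of this flow problem with full support. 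Summing the triangle equations and the vertex equations shows any such flow forces the balance $\sum_{i\in V}\Theta_i=\pi|T|$, i.e.\ Condition~\ref{cond:sum_theta_equality}; this is precisely the $T_1=\emptyset$ (equivalently $T_1=T$) instance of Condition~\ref{cond:subsets}. Throughout I would keep the standing assumption $\Theta_i>0$, which is built into Condition~\ref{cond:angle_system} via $\widehat\alpha>0$.

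For Condition~\ref{cond:angle_system}$\Rightarrow$Condition~\ref{cond:subsets} I would argue by counting. Given $T_1$ with vertex set $V_1$, every $i\in V\setminus V_1$ has all its incident triangles in $T\setminus T_1$, so its corners contribute exactly $\Theta_i$ to the total angle $\pi|T\setminus T_1|$ of the triangles in $T\setminus T_1$. Since the corners sitting at vertices of $V\setminus V_1$ form a subset of all corners of $T\setminus T_1$ and all angles are positive, $\sum_{i\in V\setminus V_1}\Theta_i\le\pi|T\setminus T_1|$, with equality iff no corner of a triangle in $T\setminus T_1$ sits at a vertex of $V_1$. For a proper nonempty $T_1$, connectedness of the surface (hence of the dual graph) yields two triangles sharing an edge, one in $T_1$ and one in $T\setminus T_1$, whose common vertices lie in $V_1$; hence equality fails, giving the equality clause.

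For Condition~\ref{cond:subsets}$\Rightarrow$Condition~\ref{cond:angle_system} I would first produce a \emph{nonnegative} feasible flow. Using the balance it already contains, Condition~\ref{cond:subsets} is equivalent to the supply-side feasibility (Gale/Hoffman) inequalities $\pi|T_1|\le\sum_{i\in\operatorname{vert}(T_1)}\Theta_i$ for all $T_1$, with equality only for $T_1\in\{\emptyset,T\}$: indeed, substituting $\sum_{i\in V\setminus V_1}\Theta_i=\pi|T|-\sum_{i\in\operatorname{vert}(T_1)}\Theta_i$ and $\pi|T\setminus T_1|=\pi|T|-\pi|T_1|$ turns the inequality of Condition~\ref{cond:subsets} into exactly this. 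These are the conditions for the minimum cut of the network (with a super-source feeding the triangles and a super-sink draining the vertices) to equal $\pi|T|$, so a maximum flow of value $\pi|T|$ exists, i.e.\ a nonnegative flow with the prescribed marginals.

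It remains to upgrade this to strict positivity on every corner, and this is the main obstacle; it is where the equality clause is indispensable. It suffices to show that each corner carries positive flow in \emph{some} maximum flow, for then the average of these finitely many flows is feasible and strictly positive everywhere. I would use the standard fact that an arc is forced to carry zero flow in every maximum flow iff it crosses some minimum cut backward, i.e.\ from the sink side to the source side. The minimum cuts here correspond precisely to the tight supply-side inequalities $\pi|T_1|=\sum_{i\in\operatorname{vert}(T_1)}\Theta_i$, hence (since $\Theta>0$, so no vertex of zero demand may be added freely to the source side) to equality in Condition~\ref{cond:subsets}; by the equality clause the only such cuts are the two trivial ones, in which all of $T\cup V$ lies on a single side. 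No corner-arc crosses either of these backward, so no corner is forced to zero, every corner carries positive flow in some maximum flow, and the averaged flow furnishes the required positive angle system. The crux throughout is this dictionary between the tight cuts of the flow problem and the equality cases of Condition~\ref{cond:subsets}: the inequality half is routine transportation theory, while the ``equality iff $T_1\in\{\emptyset,T\}$'' clause is exactly what rules out a tight proper cut and thereby secures $\widehat\alpha>0$.
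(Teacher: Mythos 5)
Your overall strategy is the right one and is in substance the same as the paper's: the paper's proof of the converse implication simply defers to Colin de Verdi\`ere's argument via the feasible flow theorem, and your transportation-network setup, the translation of Condition~\ref{cond:subsets} into the supply-side inequalities $\pi|T_1|\le\sum_{i\in V_1}\Theta_i$ with equality only for $T_1\in\{\emptyset,T\}$, and the direct counting for Condition~\ref{cond:angle_system}$\Rightarrow$Condition~\ref{cond:subsets} (using connectedness to rule out equality for proper nonempty $T_1$) are all correct. You are also right to insist on the standing assumption $\Theta>0$: Condition~\ref{cond:subsets} as literally stated does not imply it (for a single triangle it reduces to $\Theta_i+\Theta_j+\Theta_k=\pi$), so the equivalence genuinely needs this hypothesis, which the paper leaves implicit.

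The one step whose justification does not hold up is the ``standard fact'' that an arc carries zero flow in every maximum flow if and only if it crosses some minimum cut backward. The ``if'' direction is standard; the ``only if'' direction --- the one you actually use --- is false for general networks. For instance, in the network $s\to a$ (capacity $2$), $a\to t$ (capacity $1$), $a\to u$ (capacity $1$), $u\to v$ (capacity $1$), with $v$ a dead end, the arc $u\to v$ is forced to zero in every flow, yet the unique minimum cut is $\{s,a,u,v\}\mid\{t\}$, which that arc does not cross at all. What saves your argument is the special structure of the transportation network: every vertex node has demand $\Theta_i>0$ and its arc to the sink is saturated. Concretely, if a corner arc $(\tau,i)$ carries zero flow in a maximum flow $f$, let $T_R$ and $V_R\ni i$ be the triangle and vertex nodes reachable from $i$ in the residual graph (backward along corner arcs with $f>0$, forward along all corner arcs); the closure properties give $V_R\supseteq V_1(T_R)$ and $\sum_{i'\in V_R}\Theta_{i'}\le\pi|T_R|$, while Condition~\ref{cond:subsets} gives the reverse inequality with equality only for $T_R\in\{\emptyset,T\}$. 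The case $T_R=T$ puts $\tau$ in $R$ and hence yields a residual cycle through $(\tau,i)$, contradicting forcedness, and $T_R=\emptyset$ contradicts $\Theta_i>0$. This alternating-path argument (or, equivalently, Hoffman's feasibility theorem applied with a small positive lower bound $\varepsilon$ on every corner arc, which is essentially what the cited feasible-flow proof does) must replace your appeal to the general ``standard fact''; with that substitution your proof is complete.
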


\begin{proof}
  The implication `Condition~\ref{cond:angle_system} $\Rightarrow$
  Condition~\ref{cond:subsets}' is easy to see. Regarding the converse
  implication, Colin~de~Verdi\`ere proves a similar statement using
  the feasible flow theorem~\cite[Section
  7]{colin_de_verdire_un_1991}. It is straightforward to adapt his proof for
  Proposition~\ref{prop:conditions_equivalent}.
\end{proof}

\begin{proposition}
  \label{prop:E_tends_to_infty}
  If Condition~\ref{cond:angle_system} or~\ref{cond:subsets} is
  satisfied (and hence both of them are), then
  \begin{equation*}
    E_{\T,\Theta,\lambda}(u)\longrightarrow\infty
    \quad\text{if}\quad
    \max_{i\in V}u_{i}-\min_{i\in V}u_{i}\longrightarrow\infty\,.
  \end{equation*}
\end{proposition}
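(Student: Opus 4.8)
The plan is to show that $E_{\T,\Theta,\lambda}(u)$ grows without bound as the spread $\max_i u_i - \min_i u_i$ tends to infinity. Since the function is scale-invariant under $u\mapsto u + h\,1_V$ (which holds because Condition~\ref{cond:angle_system} implies Condition~\ref{cond:sum_theta_equality}), I would first reduce to a normalized situation. The key idea is to write $E_{\T,\Theta,\lambda}$ using the triangle building block $f$ via equation~\eqref{eq:E_with_f} and to compare the actual function against the \emph{linear} function one obtains by replacing the true triangle angles $\tilde\alpha$ with the constant angles $\widehat\alpha$ furnished by Condition~\ref{cond:angle_system}. Because $f$ is convex (Proposition~\ref{prop:f_extend}) with gradient equal to the triangle angles (Proposition~\ref{prop:grad_f}), one has the supporting-plane inequality
\begin{equation*}
  f(x,y,z)\;\geq\; \widehat\alpha\,x + \widehat\beta\,y + \widehat\gamma\,z
  + \ML(\widehat\alpha)+\ML(\widehat\beta)+\ML(\widehat\gamma),
\end{equation*}
valid on all of $\R^3$ for any admissible constant angle triple summing to $\pi$. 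This is the engine of the whole estimate.

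Summing this inequality over all triangles and using equation~\eqref{eq:E_with_f}, the terms linear in $\tilde\lambda$ that carry the $u_i$-dependence reorganize, by Condition~\ref{cond:angle_system}, into $\sum_i(\widehat\Theta_i - \Theta_i)u_i$ plus a constant depending only on $\lambda$ and the $\widehat\alpha$. Here $\widehat\Theta_i = \sum_{jk:ijk\in T}\widehat\alpha^i_{jk}$, and Condition~\ref{cond:angle_system} is exactly the statement that $\widehat\Theta_i=\Theta_i$, so the linear-in-$u$ part cancels and $E_{\T,\Theta,\lambda}(u)$ is bounded below by a constant. That alone gives boundedness, not properness; the point of the argument is that the supporting-plane inequality is \emph{strict} away from the supporting point, and the defect grows with the spread of $u$. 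So the next step is to quantify this defect: on each triangle, as the logarithmic side lengths $\tfrac{\tilde\lambda}{2}$ (hence the differences $u_i-u_j$ across that triangle) become large, the triangle degenerates and $f$ overshoots its supporting plane by an amount that diverges. Concretely, $f$ behaves asymptotically like $\max(\pi x,\pi y,\pi z)$ plus bounded terms when one variable dominates, whereas the supporting plane with fixed positive $\widehat\alpha,\widehat\beta,\widehat\gamma$ grows only at the slower rate dictated by those fixed weights; the gap is of linear order in the largest coordinate difference.

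Assembling these pieces: if $\max_i u_i - \min_i u_i\to\infty$, then (after normalizing $\min_i u_i = 0$, say) some vertex value $u_i$ tends to infinity, forcing large coordinate differences in at least one triangle incident to the ``large'' part of the surface, and the accumulated strict-convexity defect drives $E_{\T,\Theta,\lambda}\to\infty$. I expect the main obstacle to be the bookkeeping in this last step: one must argue that a large overall spread \emph{necessarily} produces a large side-length ratio in \emph{some} triangle and that the corresponding defect is not cancelled elsewhere. This is precisely where Condition~\ref{cond:subsets} (equivalent to Condition~\ref{cond:angle_system} by Proposition~\ref{prop:conditions_equivalent}) should be invoked: for any sublevel set of vertices where $u$ is large, the strict inequality $\pi|T\setminus T_1| > \sum_{i\in V\setminus V_1}\Theta_i$ guarantees that the boundary between the ``large-$u$'' and ``small-$u$'' vertices carries enough triangles with genuinely degenerating shape to produce a divergent defect. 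Making this thresholding argument quantitative and uniform—rather than merely asymptotic—will be the delicate part of the proof.
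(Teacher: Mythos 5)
Your skeleton matches the paper's: cancel the linear part of $E_{\T,\Theta,\lambda}$ against the angle system $\widehat\alpha$ from Condition~\ref{cond:angle_system}, and then extract a per-triangle convexity defect that grows with the spread of the logarithmic side lengths. Two things are missing, one minor and one essential. The minor one: your quantitative defect claim is only asserted via the asymptotics of $f$. The clean statement you want is the global inequality $f(x,y,z)\geq\pi\max\{x,y,z\}$ (each of $\pi x$, $\pi y$, $\pi z$ is a supporting plane of the convex extension of $f$, attained on the region where the corresponding triangle inequality fails), which combined with $\widehat\alpha_{ij}^{k}+\widehat\alpha_{jk}^{i}+\widehat\alpha_{ki}^{j}=\pi$ gives, for each triangle, a defect of at least $\min\{\widehat\alpha\}\cdot\big(\max\{\tilde\lambda_{ij},\tilde\lambda_{jk},\tilde\lambda_{ki}\}-\min\{\tilde\lambda_{ij},\tilde\lambda_{jk},\tilde\lambda_{ki}\}\big)$.

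The essential gap is the final assembly step, which you explicitly leave open and propose to close with a thresholding argument based on the strict inequality in Condition~\ref{cond:subsets}. That is the wrong tool, and no such delicate argument is needed: since every triangle's defect is nonnegative, there is no cancellation to rule out, and it suffices to accumulate defect along a single chain of triangles. The missing ingredient is the identity $u_{i}-u_{j}=\tilde\lambda_{ki}-\tilde\lambda_{jk}-\lambda_{ki}+\lambda_{jk}$ (immediate from~\eqref{eq:tilde_lambda}), which bounds the spread of $u$ over the vertices of one triangle by the spread of $\tilde\lambda$ over its edges plus a constant. Summing along a path of edge-adjacent triangles from a vertex where $u$ is maximal to one where it is minimal (the surface is connected), and then enlarging to the sum over all triangles, one gets $\max_{i}u_{i}-\min_{i}u_{i}\leq\sum_{ijk\in T}\big(\max\{\tilde\lambda\}-\min\{\tilde\lambda\}\big)+\const$, hence $E_{\T,\Theta,\lambda}(u)\geq\min\{\widehat\alpha\}\big(\max_{i}u_{i}-\min_{i}u_{i}\big)+\const$. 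Condition~\ref{cond:subsets} enters only through Proposition~\ref{prop:conditions_equivalent}, i.e.\ to guarantee that a strictly positive angle system $\widehat\alpha$ exists at all; it plays no role in the properness estimate itself.
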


\begin{definition}[Reasonably posed mapping problems]
  We say that Problem~\ref{prob:prescribe_Theta} is \emph{reasonably
    posed} if Condition~\ref{cond:angle_system} or
  Condition~\ref{cond:subsets} is satisfied (and hence both of them
  and Condition~\ref{cond:sum_theta_equality} are). We say that
  Problem~\ref{prob:general} (where $\Theta_{i}$ is prescribed only
  for $i\in V_{1}$) is \emph{reasonably posed} if positive
  $\Theta$-values can be assigned also to the vertices in $V_{0}$ so
  that Conditions~\ref{cond:angle_system} or
  Condition~\ref{cond:subsets} are satisfied (and hence both of them
  and Condition~\ref{cond:sum_theta_equality} are).
\end{definition}

\begin{corollary}[Solvability alternative]
  If Problem~\ref{prob:prescribe_Theta} or Problem~\ref{prob:general}
  are reasonably posed, then $E_{\T,\Theta,\lambda}(u)$ (maybe with
  some variables $u_{i}$ fixed) has a minimizer $u_{\textit
    min}$. Either $u_{\textit min}$ is contained in the domain where
  all triangle inequalities are satisfied, in which case it is unique
  (up to an additive constant if no variables are fixed) and
  corresponds to the solution of the discrete conformal mapping
  problem, or it lies outside that domain, in which case the
  corresponding discrete conformal mapping problem does not have a
  solution.
\end{corollary}

\begin{proof}[Proof of Proposition~\ref{prop:E_tends_to_infty}]
  Using the (constant) angles $\widehat\alpha$ we can rewrite the sum
  over vertices on the right-hand side of equation~\eqref{eq:E_with_f}
  as a sum over triangles:
  \begin{equation*}
    \sum_{i\in V}\Theta_iu_{i}=\sum_{ijk\in T}(
    \widehat\alpha_{jk}^{i}u_{i} +
    \widehat\alpha_{ki}^{j}u_{j} +
    \widehat\alpha_{ij}^{k}u_{k})\,.
  \end{equation*}
  Expressing $u$ in terms of $\tilde\lambda$ and $\lambda$, we obtain
  \begin{multline*}
    \widehat\alpha_{jk}^{i}u_{i} +
    \widehat\alpha_{ki}^{j}u_{j} +
    \widehat\alpha_{ij}^{k}u_{k}
    =
    \big(\tfrac{\pi}{2}-\widehat\alpha_{ij}^{k}\big)
    (\tilde\lambda_{ij}-\lambda_{ij})
    +\big(\tfrac{\pi}{2}-\widehat\alpha_{jk}^{i}\big)
    (\tilde\lambda_{jk}-\lambda_{jk})\\
    +\big(\tfrac{\pi}{2}-\widehat\alpha_{ki}^{j}\big)
    (\tilde\lambda_{ki}-\lambda_{ki})\,,
  \end{multline*}
  so
  \begin{equation*}
    E_{\T,\Theta,\lambda}(u) = \sum_{ijk\in T}
    \Big(
    2f\big(\tfrac{\tilde\lambda_{ij}}{2}, 
    \tfrac{\tilde\lambda_{jk}}{2}, 
    \tfrac{\tilde\lambda_{ki}}{2}\big)
    - \widehat\alpha_{ij}^{k}\tilde\lambda_{ij}
    - \widehat\alpha_{jk}^{i}\tilde\lambda_{jk}
    - \widehat\alpha_{ki}^{j}\tilde\lambda_{ki}
    \Big)
    + \const\,,
  \end{equation*}
  where here and in the following ``$\const$'' stands for terms that
  do not depend on $u$. Using the estimate of
  Proposition~\ref{prop:estimate} and
  equation~\eqref{eq:coherent_angle_sum_triangle}, one obtains
  \begin{equation*}
    \begin{split}
      E_{\T,\Theta,\lambda}(u) 
      &\geq 
      \sum_{ijk\in T} 
      \Big(
      \pi\max\{
      \tilde\lambda_{ij},
      \tilde\lambda_{jk},
      \tilde\lambda_{ki}\} -
      \widehat\alpha_{ij}^{k}\tilde\lambda_{ij} -
      \widehat\alpha_{jk}^{i}\tilde\lambda_{jk} -
      \widehat\alpha_{ki}^{j}\tilde\lambda_{ki} \Big) + \const
      \\
      &=\sum_{ijk\in T} 
      \Big(
      \widehat\alpha_{ij}^{k}
      \big(\max\{\ldots\}-\tilde\lambda_{ij}\big)
      +
      \widehat\alpha_{jk}^{i}
      \big(\max\{\ldots\}-\tilde\lambda_{jk}\big)
      +
      \widehat\alpha_{ki}^{j}
      \big(\max\{\ldots\}-\tilde\lambda_{ki}\big)
      \Big) + \const
      \\
      &\geq
      \min_{\begin{smallmatrix}
          k\\ij
        \end{smallmatrix}\in A}\{\widehat\alpha_{ij}^{k}\}
      \sum_{ijk\in T}
      \big(
      \max\{\tilde\lambda_{ij},\tilde\lambda_{jk},\tilde\lambda_{ki}\}
      -
      \min\{\tilde\lambda_{ij},\tilde\lambda_{jk},\tilde\lambda_{ki}\}
      \big) + \const
    \end{split}
  \end{equation*}
  Now if $ijk\in T$, then
  $u_{i}-u_{j}=\tilde\lambda_{ki}-\tilde\lambda_{jk}-\lambda_{ki}+\lambda_{jk}$,
  so
  \begin{multline*}
    \max\{u_{i},u_{j},u_{k}\} - \min\{u_{i},u_{j},u_{k}\}
    \leq 
    \max\{\tilde\lambda_{ij},\tilde\lambda_{jk},\tilde\lambda_{ki}\}
    -
    \min\{\tilde\lambda_{ij},\tilde\lambda_{jk},\tilde\lambda_{ki}\}
    + \const\,,
  \end{multline*}
  and because the triangulated surface is connected this implies
  \begin{equation*}
    E_{\T,\Theta,\lambda}(u) \geq \min\{\widehat\alpha_{ij}^{k}\}
    \big(\max_{i\in V}u_{i}-\min_{i\in V}u_{i}\big)+\const
    \qedhere
  \end{equation*}
\end{proof}

\section{The corresponding smooth conformal mapping problems and
  variational principles}
\label{sec:smooth}

A natural question regarding the two variational principles for
discrete conformal maps presented in
Sections~\ref{sec:variational_principle_1} and
\ref{sec:variational_principle_2} is: ``What are the corresponding
variational principles in the classical smooth theory of conformal
maps?'' In fact, even the question ``What exactly are the
corresponding smooth mapping problems?'' deserves a comment. For the
second variational principle it is not even obvious how the
variables---triangle angles---translate to the smooth theory.

\subsection{Background: Curvature, unit vector fields, and conformal metrics}
\label{sec:smooth_background}

Before we will address these questions in
Sections~\ref{sec:smooth_problems}
and~\ref{sec:smooth_second_principle}, we outline some classical
background material from the differential geometry of surfaces. The
purpose is twofold: first, to fix notation; second, our exposition
takes a particular point of view, focusing on unit vector fields,
which prepares the discussion of the second variational principle in
Section~\ref{sec:smooth_second_principle}.

Let $M$ be a smooth oriented surface, possibly with boundary, equipped
with a \emph{Riemannian metric} $g$ and the induced \emph{Levi--Civita
  connection} $\nabla$. The Riemannian metric and orientation induce a
\emph{$90\degrees$-rotation tensor}
\begin{equation*}
J:TM\rightarrow TM
\end{equation*}
and an \emph{area $2$-form}
\begin{equation*}
\sigma=g(J\cdot,\cdot).
\end{equation*}
A \emph{unit vector field} on $M$ is a tangent vector field $Y$ with
$g(Y,Y)=1$. Of course, the existence of a unit vector field imposes
restrictions on the topology of $M$. In any case, unit vector fields
exist locally, so purely local considerations remain valid for
arbitrary $M$.
The \emph{Gauss curvature} $K\in C^{\infty}(M)$ is defined by the equation
\begin{equation*}
  K=-g\big(R(Y,JY)Y,JY\big),
\end{equation*}
where $Y$ is any unit vector field, and $R$ denotes the Riemann
curvature tensor,
\begin{equation}
  \label{eq:Riem_curv}
  R(X,Y)Z=\nabla_{X}\nabla_{Y}Z-\nabla_{Y}\nabla_{X}Z-\nabla_{[X,Y]}Z.
\end{equation}
The \emph{curvature $2$-form} is defined by
\begin{equation*}
  \Omega=K\sigma.
\end{equation*}
For a unit vector field $Y$, we define the \emph{rotation $1$-form}
$\rho_{Y}$ by
\begin{equation*}
  \rho_{Y}(X)=g(\nabla_{X}Y,JY).
\end{equation*}

\begin{proposition}
  \label{prop:drho}
  For any unit vector field $Y$, 
  \begin{equation*}
    d\rho_{Y}=-\Omega.
  \end{equation*}
\end{proposition}

\begin{proof}
  The claim follows from the definition of the Riemann curvature tensor~\eqref{eq:Riem_curv},
  by a straightforward calculation:
  \begin{equation*}
    \begin{split}
      d\rho_{Y}(Y,JY) =& Y\cdot g(\nabla_{JY}Y,JY) -(JY)\cdot
      g(\nabla_{Y}Y,JY)
      -g(\nabla_{[Y,JY]}Y,JY) \\
      =& g(\nabla_{Y}\nabla_{JY}Y,JY)
      + \underbrace{g(\nabla_{JY}Y,\nabla_{Y}JY)}_{=0}\\
      &- g(\nabla_{JY}\nabla_{Y}Y,JY) -
      \underbrace{g(\nabla_{Y}Y,\nabla_{JY}JY)}_{=0}
      -g(\nabla_{[Y,JY]}Y,JY)\\
      =& g(R(Y,JY)Y, JY)=-K.
    \end{split}
  \end{equation*}
  We have used that $\nabla_{V} Y\perp Y$ and $\nabla_{V} JY\perp JY$
  for any vector field $V$, so that, because $M$ is two-dimensional,
  $g(\nabla_{V} Y,\nabla_{W} JY)=0$ for any $V,W$.
\end{proof}

Now consider a conformal change of metric with conformal factor
$e^{u}$ determined by equation~\eqref{eq:tilde_g}. Note that a
conformal change of metric is also characterized by the fact that the
$90\degrees$-rotation with respect to the new metric~$\tilde g$ is the
same tensor $J$.  The Levi--Civita connection $\widetilde\nabla$ of
$\tilde g$ is related to the Levi--Civita connection $\nabla$ of $g$
by
\begin{equation}
  \label{eq:tilde_nabla}
  \widetilde\nabla_{X}Z=
  \nabla_{X}Z+g(X,G)Z+g(Z,G)X-g(X,Z)G,
\end{equation}
where $G=\grad_{g}u$, that is, $du=g(G,\cdot)$. A unit vector field
$Y$ with respect to $g$ naturally determines a unit vector field
\begin{equation*}
  \tilde Y=e^{-u}\,Y
\end{equation*}
with respect to $\tilde g$. Its rotation $1$-form is 
\begin{equation*}
  \tilde\rho_{\tilde
  Y}(X)=\tilde g(\widetilde\nabla_{X}\tilde Y, J\tilde Y). 
\end{equation*}
\begin{proposition}
  \label{prop:tilde_rho}
  The rotation $1$-forms $\rho_{Y}$, $\tilde\rho_{\tilde Y}$ are
  related by
  \begin{equation*}
    \tilde\rho_{\tilde Y}=\rho_{Y}+*du\,,
  \end{equation*}
  where $*$ denotes the Hodge star operator for $g$.
\end{proposition}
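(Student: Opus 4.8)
The plan is to prove the identity by a direct computation, expanding the definition $\tilde\rho_{\tilde Y}(X)=\tilde g(\widetilde\nabla_X\tilde Y,J\tilde Y)$ and repeatedly invoking the connection transformation formula~\eqref{eq:tilde_nabla}. The only conceptual input beyond bookkeeping is the relation between the Hodge star $*$ for $g$ and the rotation $J$: on an oriented surface with $\sigma=g(J\cdot,\cdot)$ one has $*\omega=-\omega\circ J$, or equivalently $*(g(W,\cdot))=g(JW,\cdot)$. Everything else is an exercise in tracking scale factors.

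First I would differentiate $\tilde Y=e^{-u}Y$. Using $\widetilde\nabla_X(e^{-u}Y)=-e^{-u}\,du(X)\,Y+e^{-u}\,\widetilde\nabla_X Y$ together with~\eqref{eq:tilde_nabla} applied to $\widetilde\nabla_X Y$, and recalling that $du=g(G,\cdot)$ so that $g(X,G)=du(X)$, the term $e^{-u}g(X,G)Y$ produced by~\eqref{eq:tilde_nabla} cancels against $-e^{-u}\,du(X)\,Y$. This is the pleasant first simplification, leaving
\begin{equation*}
  \widetilde\nabla_X\tilde Y=e^{-u}\big(\nabla_X Y+g(Y,G)\,X-g(X,Y)\,G\big).
\end{equation*}

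Next I would substitute this into $\tilde\rho_{\tilde Y}(X)=\tilde g(\widetilde\nabla_X\tilde Y,J\tilde Y)$. Since $J\tilde Y=e^{-u}JY$ and $\tilde g=e^{2u}g$, the three factors $e^{2u}\cdot e^{-u}\cdot e^{-u}$ cancel exactly, and the whole expression collapses to a pairing in $g$:
\begin{equation*}
  \tilde\rho_{\tilde Y}(X)=g(\nabla_X Y,JY)+g(Y,G)\,g(X,JY)-g(X,Y)\,g(G,JY).
\end{equation*}
The first summand is precisely $\rho_Y(X)$, so it remains only to identify the two remaining terms with $*du(X)$.

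The last step—matching the residual to $*du$—is where one must be careful, and it is the only place a sign could go astray. I would exploit that $\{Y,JY\}$ is a $g$-orthonormal frame, expand $X$ and $G$ in it, and compute $g(JG,X)$ directly; using $JG=g(G,Y)\,JY-g(G,JY)\,Y$ one finds
\begin{equation*}
  g(JG,X)=g(G,Y)\,g(X,JY)-g(G,JY)\,g(X,Y),
\end{equation*}
which agrees term-by-term with the residual above. Since $du=g(G,\cdot)$ gives $*du=g(JG,\cdot)$, this is exactly $*du(X)$, completing the proof. The main thing to watch is the orientation convention: with $\sigma=g(J\cdot,\cdot)$ as fixed in the text, the Hodge star acts by $*(g(W,\cdot))=g(JW,\cdot)$, and with this consistent choice the computation closes with the stated sign rather than its opposite.
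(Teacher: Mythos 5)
Your proof is correct and follows essentially the same route as the paper's: a direct computation expanding $\widetilde\nabla_X\tilde Y$ via the conformal change of connection~\eqref{eq:tilde_nabla}, cancelling the scale factors, and identifying the residual two terms with $*du(X)$ using the orthonormal frame $\{Y,JY\}$ and the skewness of $J$. The only cosmetic differences are that the paper discards the $du(X)\,Y$ term by orthogonality to $JY$ rather than by cancellation against $g(X,G)Y$, and phrases the final frame computation as $-g(G,JX)$ instead of $g(JG,X)$; both are consistent with the convention $*\omega=-\omega(J\cdot)$.
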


\noindent%
The \emph{Hodge star operator} $*$ maps a $1$-form $\omega$ to the $1$-form
$*\omega=-\omega(J\cdot)$. It also maps a function ($0$-form) $f$ to
the $2$-form $*f=f\sigma$ and vice versa, $*{f\sigma}=f$. Note that on
a $2$-dimensional manifold, the action of the Hodge star operator on
$1$-forms depends only on the conformal class of the metric.

\begin{proof}[Proof of Proposition~\ref{prop:tilde_rho}]
  By the product rule,
  \begin{equation*}
    \widetilde\nabla_{X}\tilde Y = e^{-u}\,(
    - du(X)Y + \widetilde\nabla_{X}Y),
  \end{equation*}
  so
  $
      \tilde\rho_{\tilde Y}(X) 
      = 
      g(\widetilde\nabla_{X}Y,JY)\,.
  $
  Using~\eqref{eq:tilde_nabla} one obtains
  \begin{equation*}
    \begin{split}
      \tilde\rho_{\tilde Y}(X) = & 
      \rho_{Y}(X) + g(Y,G)g(X,JY) - g(X,Y)g(G,JY).
    \end{split}
  \end{equation*}
  Finally, since $J$ is skew, $J^{2}=-1$, and $(Y,JY)$ is an
  orthonormal frame,
  \begin{equation*}
    \begin{split}
      g(Y,G)g(X,JY) - g(X,Y)g(G,JY) &=
      -g(G,Y)g(Y,JX) - g(G,JY)g(JY,JX)\\
      &= -g(G, JX)=-du(JX)=*du(X).
    \end{split}
  \end{equation*}
  This completes the proof.
\end{proof}

As a corollary of Propositions~\ref{prop:drho}
and~\ref{prop:tilde_rho}, we obtain the equation relating the curvature
$2$-forms of $g$ and $\tilde g$,
\begin{equation}
  \label{eq:tilde_Omega}
  \tilde\Omega=\Omega-d*du,
\end{equation}
and hence \emph{Liouville's equation} for the curvatures,
\begin{equation}
  \label{eq:Liouville}
  e^{2u}\tilde K=K+\Delta u,
\end{equation}
where $\Delta$ is the \emph{Laplace--Beltrami} operator with respect to $g$,
\begin{equation*}
  \Delta f=-{*d{*df}}.
\end{equation*}
(We use the sign convention for the Laplace operator that renders it
positive semidefinite.)

\subsection{Smooth mapping problems and the first variational principle}
\label{sec:smooth_problems}

Which problems in the smooth theory are analogous to the discrete
conformal mapping problems discussed in this paper? There are two
fairly obvious candidates:

\begin{problem}
  \label{prob:smooth_K}
  Given $(M,g)$ and the function $\tilde K$, find a conformally
  equivalent Riemannian metric $\tilde g=e^{2u}g$ with curvature
  $\tilde K$.
\end{problem}

This amounts to solving Liouville's equation~\eqref{eq:Liouville} for
$u$.

\begin{problem}
  \label{prob:smooth_Omega}
  Given $(M,g)$ and the $2$-form $\tilde\Omega$, find a conformally
  equivalent Riemannian metric $\tilde g=e^{2u}g$ with curvature $2$-form
  $\tilde \Omega$. 
\end{problem}

Prescribing the target curvature $2$-form is equivalent to prescribing
$e^{2u}\tilde K$ instead of the target curvature $\tilde
K$. Problem~\ref{prob:smooth_Omega} amounts to solving
equation~\eqref{eq:tilde_Omega}, which is equivalent to
\emph{Poisson's equation}
\begin{equation*}
  \Delta u = f
\end{equation*}
with right-hand side $f=*\tilde\Omega-K$.

For both problems, one may prescribe either $u$ on the boundary
\emph{(Dirichlet conditions)} or $*du|_{T\partial M}$ with $\int_{\partial
  M}*du=-\int_{M}(\tilde\Omega-\Omega)$ \emph{(Neumann conditions)}.

Both Liouville's equation~\eqref{eq:Liouville} and the Poisson
equation~\eqref{eq:tilde_Omega} are variational, with the respective
functionals
\begin{equation}
  \label{eq:E_A}
  E_{A}(u)=\int_{M}\big(
  \tfrac{1}{2} du \wedge *du
  - (\tfrac{1}{2}e^{2u} \tilde K - u K)\sigma
  \big)
\end{equation}
for Liouville's equation, where $\tilde K$ and $K$ are fixed given
functions on $M$, and
\begin{equation}
  \label{eq:E_B}
  E_{B}(u)=\int_{M}\big(
  \tfrac{1}{2}du \wedge *du 
  - u(\tilde\Omega - \Omega)
  \big)
\end{equation}
for equation~\eqref{eq:tilde_Omega}, where $\tilde\Omega$ and $\Omega$
are fixed given $2$-forms on $M$. 

\textbf{Question:} Which of the two candidates, Problem~\ref{prob:smooth_K} or
Problem~\ref{prob:smooth_Omega}, is the smooth version of the discrete
mapping problems described in Section~\ref{sec:mapping_problems}?

\textbf{Answer:} Comparing the scaling behavior
shows that the discrete mapping problems are discretization of
Problem~\ref{prob:smooth_Omega} and not of
Problem~\ref{prob:smooth_K}. The function $E_{\T,\Theta,\lambda}$ of
the first variational principle corresponds to $E_{B}$ and not to
$E_{A}$.

Indeed, although we did denote the angle defect $2\pi-\Theta_i$ at a
vertex $i$ of a triangulation by $K_{i}$ in
Appendix~\ref{sec:nec_cond_exist}, this angle defect is a
discretization of the curvature $2$-form $\Omega$ and not of the Gauss
curvature $K$, the latter being an angle defect \emph{per surface
  area}.  The difference manifests itself in the scaling behavior: The
angle defects at vertices and the curvature $2$-form remain invariant
if lengths are scaled by a constant factor. The Gauss curvature, on
the other hand, is inversely proportional to the square of the scale
factor. Thus, prescribing the angle defects $K_{i}$ at the vertices of
a triangulation corresponds to prescribing the curvature $2$-form
$\Omega$ of a smooth surface, as in Problem~\ref{prob:smooth_Omega},
and not the Gauss curvature $K$, as in Problem~\ref{prob:smooth_K}.

\begin{remark}
  For $\tilde K=0$ and $\tilde\Omega=0$ there is no difference between
  Problems~\ref{prob:smooth_K} and~\ref{prob:smooth_Omega}, and
  $E_{A}=E_{B}$.
\end{remark}

\begin{remark}
  Consider the gradient flow of the discrete functional
  $E_{\T,\Theta,\lambda}$.  For a closed triangulated surface with
  prescribed angle sum $\Theta_{i}=2\pi$ for all vertices, this gradient
  flow is equal to
  \begin{equation}
    \label{eq:discrete_curvature_flow}
    \dot u_{i}(t) = -K_{i}(t),
  \end{equation}
  where $K_{i}(t)$ is the angle defect around vertex $i$ at time
  $t$. At first glance, this looks like a discrete version of the
  Ricci flow for surfaces~\cite{luo_combinatorial_2004}. For surfaces,
  Ricci flow is the same as Yamabe flow because the Ricci tensor is
  proportional to the Riemannian metric. The Riemannian metric evolves
  conformally, $g_{t}=e^{2u_{t}}g_{0}$, according to the law
  \begin{equation}
    \label{eq:Ricci}
    \quad\dot u_{t}=-K_{t},
  \end{equation}
  where $K_{t}$ is the Gauss curvature at time $t$.

  However, the above comparison of the scaling behaviors of angle
  defect $K_{i}$ and Gauss curvature $K$ shows
  that~\eqref{eq:discrete_curvature_flow} is not a discretization
  of~\eqref{eq:Ricci}. In fact, the
  flow~\eqref{eq:discrete_curvature_flow} is a discretization of the
  gradient flow of $E_{B}$,
  \begin{equation*}
    \dot u_{t}=-e^{2u_{t}}K_{t},
  \end{equation*}
  which is a curvature flow for surfaces that is different from the
  Ricci/Yamabe flow.

  The same scaling argument applies to other flows that have
  mistakenly been tagged combinatorial or discrete Ricci
  flow~\cite{chow_combinatorial_2003}, \cite{gu_computational_2008}.
\end{remark}

\subsection{The second variational principle and harmonic unit vector
  fields}
\label{sec:smooth_second_principle}

The variables of the second variational principle
(Section~\ref{sec:variational_principle_2}) are assignments of new
angles in a euclidean triangulation. How do these variables translate
to smooth manifolds? Consider an angle vector
$\alpha\in(\R_{>0})^{A_{\T}}$ that assigns angle values
$\alpha_{jk}^{i}>0$ to the corners 
$\begin{smallmatrix}
  i\\jk
\end{smallmatrix}\in A_{\T}
$ of the triangles in such a way that the sum is $\pi$ in each
triangle. While such an angle assignment fixes the shape of each
triangle (up to similarity), a consistent assignment of edge lengths
is generally not possible. The assigned angles do, however, induce a
sensible definition of parallel transport of unit vectors from edge to
edge: The unit vector that makes an angle $\beta_{ij}$ with the
directed edge $ij$ in triangle $ijk$ is transported to the unit vector
that makes an angle $\beta_{jk}=\beta_{ij}+\alpha_{ki}^{j}-\pi$ with
edge $jk$ (see Figure~\ref{fig:transport_directions}).
\begin{figure}
  \centering
  \includegraphics{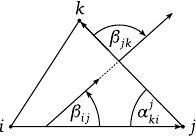}
  \caption{Parallel transport of directions from edge to edge.}
  \label{fig:transport_directions}
\end{figure}
Therefore, an angle assignment $\alpha\in(\R_{>0})^{A_{\T}}$ in a
euclidean triangulation corresponds to a connection of the unit
tangent bundle $T_{1}M$ of the smooth surface $M$.

For simplicity, our discussion of the second variational principle
will focus on the special case when the triangulation is topologically
a closed disk and the prescribed angle sums at interior vertices are
$2\pi$. In the smooth setting, angle assignments that sum to $\pi$ in
each triangle and to $2\pi$ around each vertex correspond to flat
connections of the unit tangent bundle.
Since the surface is assumed to be simply connected, for any such flat
connection there exists a parallel unit vector field and this is
unique up to rotation by a constant angle. Conversely, any unit vector
field is parallel for a unique flat connection. Thus, this special
case allows a more intuitive treatment involving unit vector fields
and rotation $1$-forms instead of connections and connection
$1$-forms. At the end of this section, we will indicate how to treat
the general case.

So assume for now that $M$ is diffeomorphic to a closed disk and
consider Problem~\ref{prob:smooth_Omega} with $\tilde\Omega=0$. That is, we are
looking for a conformally equivalent flat metric. 
The \emph{Dirichlet energy} of a unit vector field $Y$ is 
\begin{equation}
  \label{eq:smooth_Dirichlet_S}
  S(Y) = \tfrac{1}{2}\int_{M}\rho_{Y}\wedge *\rho_{Y}.
\end{equation}
Critical points of this Dirichlet energy are the \emph{harmonic
  sections of the unit tangent bundle}. Admissible variations are
within the space of unit vector fields, fixing the values on the
boundary.

\begin{proposition}[First variation]
  \label{prop:first_variation}
  Let $Y_{t}$ be a variation of the unit vector field $Y=Y_{0}$ with
  \begin{equation*}
    \frac{d}{dt}\Big|_{t=0} Y_{t}=h\,JY,
  \end{equation*}
  where $h\in C^{\infty}(M)$. Then
  \begin{equation*}
    \frac{d}{dt}\Big|_{t=0}
    S(Y_{t})=-\int_{M} h\;d {*\rho_{Y}}+\int_{\partial M}h\;{*\rho_{Y}}\,.
  \end{equation*}
\end{proposition}

\begin{proof}
  This follows from $\tfrac{d}{dt}\big|_{t=0}\,\rho_{Y_{t}}=dh$.
\end{proof}

\begin{corollary}
  A unit vector field $Y$ is a critical point of $S$ under variations
  that fix $Y$ on the boundary ${\partial M}$ if and only if
  \begin{equation*}
    d{*\rho_{Y}}=0.
  \end{equation*}
  It is also a critical point of $S$ under arbitrary variations if and
  only if, additionally,
  \begin{equation}
    \label{eq:star_rho_on_boundary_vanishes}
    {*\rho_{Y}}\big|_{T\partial M}=0.
  \end{equation}
\end{corollary}

Loosely speaking, the following proposition says that straightest unit
vector fields with respect to $g$ are parallel with respect to a
conformally equivalent flat metric $\tilde g$ with trivial global
holonomy.

\begin{proposition}[Smooth version of the second variational  principle]
  Suppose the unit vector field $Y$ is a critical point of $S$ under
  variations that fix $Y$ on the boundary. Define the function $u$ up
  to an additive constant by
  \begin{equation*}
    du=*\rho_Y.
  \end{equation*}
  (This is possible since $*\rho_{Y}$ is closed by the above corollary
  and we assumed that $M$ was diffeomorphic to a disk.) Let $\tilde
  g=e^{2u}g$. Then:

  (i) The unit vector field $\tilde Y = e^{-u}Y$ is parallel with
  respect to $\tilde g$, so $\tilde g$ is flat.

  (ii) The geodesic curvature of the boundary $\partial M$ with
  respect to $\tilde g$ is
  \begin{equation*}
    \tilde\kappa = \kappa - \rho_{Y}(T),
  \end{equation*}
  where $\kappa$ is the geodesic curvature with respect to $g$ and $T$
  is the positively oriented unit tangent vector field to $\partial
  M$.  

  (iii) If $Y$ is also a critical point of $S$ under arbitrary variations,
  then $u|_{\partial M}$ is constant.
\end{proposition}

\begin{proof}
  Since $*du=-\rho_{Y}$, the rotation form of $\tilde\rho_{\tilde Y}$
  vanishes by Proposition~\ref{prop:tilde_rho}. This implies~(i). The
  geodesic boundary curvatures are $\kappa=\rho_{T}(T)$ and
  $\tilde\kappa=\tilde\rho_{\tilde T}(\tilde T)$. (Locally extend the
  unit vector field $T$ inwards from the boundary.) Again by
  Proposition~\ref{prop:tilde_rho}, this implies~(ii). Finally, (iii)
  follows immediately from~\eqref{eq:star_rho_on_boundary_vanishes}.
\end{proof}

In the general case, $M$ is not restricted to be diffeomorphic to
a closed disk and one is looking for a conformally equivalent metric
$\tilde g$ with prescribed curvature $2$-form $\tilde\Omega$. To treat
this case variationally, consider the functional
\begin{equation*}
  S(\rho)=\tfrac{1}{2}\int_{M}\rho\wedge{*\rho}
\end{equation*}
on the affine space of $1$-forms $\rho$ satisfying
$d\rho=\tilde\Omega-\Omega$. We leave the details to the reader, not
because they are tedious but because they are interesting. The
critical points correspond to conformally equivalent similarity
structures, that is, ``metrics'' which may have global scaling
holonomy. (Compare the remark at the end of
Section~\ref{sec:variational_principle_2}.)
 
\section{Relation to circle patterns}
\label{sec:circle_patterns_etc}

\subsection{Two variational principles for circle patterns}
\label{sec:circle_patterns}

While the discrete conformal mapping problems essentially ask for
ideal hyperbolic polyhedra with prescribed metric, the circle pattern
problem below asks for an ideal polyhedron with prescribed dihedral
angles. Rivin's variational principle for this type of
problem~\cite{rivin_euclidean_1994} is very similar to our second
variational principle for discrete conformal maps. The function is
essentially the same, only the constraints placed on the angle
assignments are stronger. Also, the first variational principle needs
only a slight modification to become a variational principle for
circle patterns.

\begin{problem}[Circle pattern problem]
  \label{prob:circle_pattern}
  \emph{\textbf{Given}} a surface triangulation $\T$ and a function
  $\Phi\in(0,\pi]^{E}$, \emph{\textbf{find}} a discrete metric $\tilde\ell$ so
  that the euclidean triangulation $(\T,\tilde\ell)$ has circumcircle
  intersection angles $\Phi_{ij}$ as shown in
  Figure~\ref{fig:circle_pattern_theta}.
  \begin{figure}
    \centering
    \includegraphics{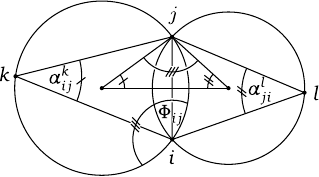}
    \caption{Circumcircle intersection angles
      $\Phi_{ij}=\alpha_{ij}^{k}+\alpha_{ji}^{l}$. For a boundary
      edge~$ij\in \Ebdy$, define $\Phi_{ij}=\alpha_{ij}^{k}$.}
    \label{fig:circle_pattern_theta}
  \end{figure}
\end{problem}

\begin{proposition}[Rivin~\cite{rivin_euclidean_1994}]
  The angles $\tilde\alpha\in\R^{A}$ are the angles of a euclidean
  triangulation $(\T,\tilde\ell)$ that solves
  Problem~\ref{prob:circle_pattern} if and only if
  $S_{\T,\lambda}(\tilde\alpha)$ is the maximum of $S_{\T,\lambda}$ on
  the set of all $\alpha\in\R^{A}$ that satisfy
  \begin{compactenum}[(i)]
  \item $\alpha>0$,
  \item $\alpha_{ij}^{k}+\alpha_{jk}^{i}+\alpha_{ki}^{j}=\pi$\; for all
    triangles $ijk\in T$,
  \item $\alpha_{ij}^{k}+\alpha_{jk}^{l}=\Phi_{ij}$\; for all interior
    edges $ij\in\Eint$,
  \item $\alpha_{ij}^{k}=\Phi_{ij}$\; for all boundary edges $ij\in\Ebdy$.
  \end{compactenum}
\end{proposition}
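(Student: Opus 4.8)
The plan is to follow the pattern of the proof of Proposition~\ref{prop:second_variational_1}, combining strict concavity with a combinatorial description of the tangent space of the constraint set. Write $P$ for the set cut out by conditions (i)--(iv). First I would observe that on $P$ the function $S_{\T,\lambda}$ differs from $\sum_{ijk}V(\alpha_{ij}^{k},\alpha_{jk}^{i},\alpha_{ki}^{j})$ only by a constant: regrouping the linear part $\sum\alpha_{\cdot}\lambda_{\cdot}$ by edges gives $\sum_{ij\in\Eint}\lambda_{ij}(\alpha_{ij}^{k}+\alpha_{ji}^{l})+\sum_{ij\in\Ebdy}\lambda_{ij}\alpha_{ij}^{k}$, which equals the constant $\sum_{ij\in E}\lambda_{ij}\Phi_{ij}$ by (iii)--(iv). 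Hence the $\lambda$-dependence is immaterial on $P$ (explaining why the circle-pattern problem needs no conformal class), and by the strict concavity of $V$ on each triangle's angle simplex established above, $S_{\T,\lambda}\big|_{P}$ is strictly concave. Since $P$ is convex and relatively open (positivity is strict), a point of $P$ maximizes $S_{\T,\lambda}$ if and only if it is a critical point of the restriction, and such a critical point is unique. It therefore suffices to identify the critical points of $S_{\T,\lambda}\big|_{P}$ with the angle functions of euclidean triangulations having circumcircle intersection angles $\Phi$.

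For the critical-point equations I would introduce the bipartite graph $\Gamma^{*}$ whose nodes are $T_{\T}\sqcup E_{\T}$ and whose edges are the angles $A_{\T}$, the angle $\alpha_{ij}^{k}$ joining the face node $ijk$ to the edge node $ij$. This is the exact analogue, with vertices replaced by edges, of the graph $\Gamma$ used in Proposition~\ref{prop:second_variational_1}: the homogeneous forms of (ii) are the face-node constraints and those of (iii)--(iv) the edge-node constraints, so the tangent space $T$ of $P$ is precisely the space of $v\in\R^{A}$ summing to zero at every node of $\Gamma^{*}$, that is, the cycle space of $\Gamma^{*}$. Using $\ML'(x)=-\log|2\sin x|$ and the fact (just shown) that the $\lambda$-terms pair to zero against $T$, one has $dS_{\T,\lambda}(v)=\sum_{a}\bigl(-\log(2\sin\alpha_{a})\bigr)v_{a}$ for $v\in T$. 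Thus $\tilde\alpha$ is critical if and only if the edge function $a\mapsto-\log(2\sin\tilde\alpha_{a})$ annihilates the cycle space of $\Gamma^{*}$, i.e.\ lies in the cut space, i.e.\ is a coboundary: there is a potential $\phi$ on $T_{\T}\sqcup E_{\T}$ with
\[
  -\log\bigl(2\sin\tilde\alpha_{ij}^{k}\bigr)=\phi(ij)-\phi(ijk)\qquad\text{for every angle }\alpha_{ij}^{k}.
\]

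The last step is to read this coboundary condition as the law of sines. Exponentiating gives $2\sin\tilde\alpha_{ij}^{k}=e^{\phi(ijk)}/e^{\phi(ij)}$; setting $\tilde\ell_{ij}=e^{-\phi(ij)}$ and $R_{ijk}=e^{-\phi(ijk)}$ this is exactly $\sin\tilde\alpha_{ij}^{k}=\tilde\ell_{ij}/(2R_{ijk})$. Crucially $\tilde\ell_{ij}$ depends only on the edge, so the two triangles sharing an interior edge assign it the same length; within each triangle the three lengths are proportional to the sines of angles summing to $\pi$, hence satisfy the triangle inequalities and realize a euclidean triangle with precisely the angles $\tilde\alpha$. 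Gluing along the matched edges (cone points at the vertices are permitted) produces a euclidean triangulation $(\T,\tilde\ell)$ with angle function $\tilde\alpha$; conversely, any such triangulation yields the potential via $\phi(ij)=-\log\tilde\ell_{ij}$, $\phi(ijk)=-\log R_{ijk}$. So the critical points of $S_{\T,\lambda}\big|_{P}$ are exactly the angle functions of euclidean triangulations, and by the defining relations $\Phi_{ij}=\tilde\alpha_{ij}^{k}+\tilde\alpha_{ji}^{l}$ (interior) and $\Phi_{ij}=\tilde\alpha_{ij}^{k}$ (boundary) imposed in (iii)--(iv) (Figure~\ref{fig:circle_pattern_theta}), these are precisely the triangulations solving Problem~\ref{prob:circle_pattern}. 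Together with the concavity step this gives the stated equivalence.

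I expect the main obstacle to be the bookkeeping in the tangent-space identification: checking that the sign-free node constraints defining $P$ coincide with the cycle space of the bipartite graph $\Gamma^{*}$, and that its orthogonal complement is exactly the space of coboundaries, so that no global holonomy condition survives beyond the single potential $\phi$. Once the reconstruction $\tilde\ell_{ij}=e^{-\phi(ij)}$ is available the surface closes up automatically because cone singularities are allowed, which is what makes this case cleaner than the conformal one; I would, however, record the compatibility condition $\sum_{ij\in E}\Phi_{ij}=\pi\,|T_{\T}|$, obtained by summing (ii) and comparing with (iii)--(iv), as a necessary condition for $P$ to be nonempty.
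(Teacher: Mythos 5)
Your proof is correct. Note, however, that the paper does not prove this proposition at all --- it is stated with a citation to Rivin and the only surrounding discussion is the observation (which you also make) that conditions \emph{(iii)}--\emph{(iv)} render the $\lambda$-dependence of $S_{\T,\lambda}$ an additive constant on the constraint set. So there is no in-paper proof to compare against; the closest relative is the proof of Proposition~\ref{prop:second_variational_1}, and your argument is a faithful adaptation of that strategy with one genuine simplification. In the conformal case the tangent space of the constraint set is the cycle space of a graph $\Gamma$ on $V_{\T}\cup T_{\T}$, and the paper must treat the interior-edge cycles and the homologically nontrivial cycles separately, reconstructing $\tilde\lambda$ by integrating along triangle chains and checking path-independence. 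In your setting the constraints live on $T_{\T}\sqcup E_{\T}$, every angle is incident to exactly one node of each type, and orthogonality to the \emph{entire} cycle space of the bipartite graph immediately puts $-\log(2\sin\tilde\alpha)$ in the cut space, so a single potential $\phi$ exists with no residual holonomy; exponentiating $\phi$ on edge nodes and face nodes then reads off the side lengths and circumradii via the law of sines. All the supporting steps check out: the strict concavity of $S_{\T,\lambda}$ restricted to the relatively open convex set $P$ (so that maximum $\Leftrightarrow$ critical point), the vanishing of the $\lambda$-terms against the tangent space, the triangle inequalities for lengths proportional to sines of angles summing to $\pi$, and the converse direction. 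Two small remarks: you silently correct the index typo in condition \emph{(iii)} (the paper's $\alpha_{jk}^{l}$ should be $\alpha_{ji}^{l}$, as in Figure~\ref{fig:circle_pattern_theta}), which is the right reading; and your closing observation $\sum_{ij\in E}\Phi_{ij}=\pi\,|T_{\T}|$ is a correct necessary condition for $P\neq\emptyset$, parallel to Condition~\ref{cond:sum_theta_equality}.
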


\noindent%
Due to conditions \emph{(iii)} and \emph{(iv)}, the choice of the parameter
$\lambda\in\R^{E}$ of $S_{\T,\lambda}$ does not matter because
\begin{equation*}
  S_{\T,\lambda}(\alpha)=S_{\T,0}(\alpha) + \sum_{ij\in E}\Phi_{ij}\lambda_{ij}.
\end{equation*}
So in connection with circle patterns, it makes sense to consider only
\begin{equation*}
  S_{\T,0}(\alpha)=\sum\ML(\alpha_{ij}^{k}).
\end{equation*}

Now consider the first variational principle for discrete conformal
maps. For $\Phi\in\R^{E}$, $\Theta\in\R^{V}$ define 
\begin{gather}
  \notag
  \mathcal{E}_{\T,\Phi,\Theta}:\R^{E}\times\R^{V}\longrightarrow\R,\\
  \label{eq:E_circle_pattern}
    \mathcal{E}_{\T,\Phi,\Theta}(\lambda,u)= \sum_{ijk\in T} 2\hat
    V(\lambda_{ij},\lambda_{jk},\lambda_{ki},-u_{i},-u_{j},-u_{k})
    -\sum_{ij\in E}\Phi_{ij}\lambda_{ij} +\sum_{i\in
      V}\Theta_{i}u_{i}.
\end{gather}
Compare equation~\eqref{eq:E_of_lambda_with_V}. If $\Phi$ is defined
by equation~\eqref{eq:theta_ij_for_dconf}, then
\begin{equation*}
  \mathcal{E}_{\T,\Phi,\Theta}(\lambda,u)=E_{\T,\Theta,\lambda}(u).
\end{equation*}
So if we fix $\lambda$ and vary $u$, then we obtain the first
variational principle for discrete conformal maps. If, on the other
hand, we fix $u$ and vary $\lambda$, then we obtain a variational
principle for circle patterns
(see Proposition~\ref{prop:var_princ_for_cp_with_E} below). Interpret the
circumcircles and triangle sides as hyperbolic planes in the
half-plane model. Then, using equation~\eqref{eq:dVhat} and the fact
that opposite dihedral angles in an ideal tetrahedron are equal, one
gets for an interior edge $ij\in\Eint$ that
\begin{equation*}
    \frac{\partial}{\partial\lambda_{ij}}\,\mathcal{E}_{\T,\Phi,\Theta}
    = \tilde\alpha_{ij}^{k}+\tilde\alpha_{ji}^{l}-\Phi_{ij},
\end{equation*}
where $\tilde\alpha$ are the angles in the euclidean triangulation
$(\T,\tilde\ell)$ with $\tilde\ell$ determined by
equations~\eqref{eq:tilde_ell} and~\eqref{eq:lambda}.  (Here we assume
that $\tilde\ell$ satisfies the triangle inequalities. Otherwise the
angles are $0$ or $\pi$ as stipulated in
Proposition~\ref{prop:f_extend}.) In the same way, one gets for a
boundary edge $ij$
\begin{equation*}
    \frac{\partial}{\partial\lambda_{ij}}\,\mathcal{E}_{\T,\Phi,\Theta}
    = \tilde\alpha_{ij}^{k}-\Phi_{ij},
\end{equation*}
implying the following variational principle for circle
patterns. (Note that $\mathcal{E}_{\T,\Phi,\Theta}(\lambda,u)$ is also
convex if we fix $u$ and consider $\lambda$ as variables.)

\begin{proposition}
  \label{prop:var_princ_for_cp_with_E}
  The function $\tilde\ell\in\R_{>0}^{E}$ defined in terms of
  $\lambda$ and $u$ by equations~\eqref{eq:tilde_ell}
  and~\eqref{eq:lambda} is a solution of
  Problem~\ref{prob:circle_pattern} if and only if $\tilde\ell$
  satisfies all triangle inequalities and
  $\mathcal{E}_{\T,\theta,\Theta}(\lambda,u)$ is the minimum of the
  function
  $\lambda\mapsto\mathcal{E}_{\T,\theta,\Theta}(\lambda,u)$. That is,
  $u$ is arbitrary but constant. (Without loss of generality one could
  fix $u=0$.)
\end{proposition}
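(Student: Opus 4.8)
The plan is to read the result off from two facts already in hand: the explicit partial derivatives of $\mathcal{E}_{\T,\Phi,\Theta}$ with respect to the $\lambda_{ij}$ computed just above, and the convexity of $\lambda\mapsto\mathcal{E}_{\T,\Phi,\Theta}(\lambda,u)$ noted in the preceding parenthetical remark. First I would fix $u$ and use that this function is convex and continuously differentiable in $\lambda$ (the convexity by that remark, the differentiability because the building block $\widehat V$ extends to a $C^{1}$ function exactly as $f$ does in Proposition~\ref{prop:f_extend}). Hence a point $\lambda$ realizes the minimum if and only if it is a critical point, i.e.\ all partial derivatives vanish. The derivative formulas then say that $\partial\mathcal{E}_{\T,\Phi,\Theta}/\partial\lambda_{ij}=0$ for every edge precisely when $\tilde\alpha_{ij}^{k}+\tilde\alpha_{ji}^{l}=\Phi_{ij}$ on each interior edge and $\tilde\alpha_{ij}^{k}=\Phi_{ij}$ on each boundary edge, which is exactly the requirement that $(\T,\tilde\ell)$ have the prescribed circumcircle intersection angles, i.e.\ that $\tilde\ell$ solve Problem~\ref{prob:circle_pattern}.

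The role of the triangle-inequality hypothesis must be made explicit at this step. The derivative formulas equate $\partial\mathcal{E}_{\T,\Phi,\Theta}/\partial\lambda_{ij}$ with the genuine angle deficit $\tilde\alpha_{ij}^{k}+\tilde\alpha_{ji}^{l}-\Phi_{ij}$ only when the triangles $ijk$ and $jil$ are honest euclidean triangles; if $\tilde\ell$ violates a triangle inequality the angles take the extended values $0$ or $\pi$ of Proposition~\ref{prop:f_extend}, and then $\tilde\ell$ is not a discrete metric and cannot solve the circle pattern problem. Accordingly, for the forward implication I would note that a solution of Problem~\ref{prob:circle_pattern} is by definition a discrete metric with the prescribed intersection angles, hence a critical point, hence a minimizer by convexity; for the converse I would invoke the standing assumption that $\tilde\ell$ satisfies all triangle inequalities, so that the derivative formulas apply verbatim and a minimizer is a bona fide solution.

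The one substantive point is the claim that $u$ is arbitrary. I would establish it through a shift identity for the decorated-tetrahedron volume. Since the dihedral angles of an ideal tetrahedron depend only on its congruence class, not on the decorating horospheres, and since the three dihedral angles at each ideal vertex sum to $\pi$, integrating Schl\"afli's formula~\eqref{eq:dVhat} along the straight path of a simultaneous shift $\lambda_{ij}\mapsto\lambda_{ij}+s_{i}+s_{j}$ of all six edge lengths gives
\[
  \widehat V\big(\lambda_{ij}+s_{i}+s_{j}\big)=\widehat V\big(\lambda_{ij}\big)+\tfrac{\pi}{2}\,\textstyle\sum_{i}s_{i}.
\]
Applying this to each summand $2\widehat V(\lambda_{ij},\lambda_{jk},\lambda_{ki},-u_{i},-u_{j},-u_{k})$ with the shifts $u_{i},u_{j},u_{k}$ on the three triangle vertices and $0$ on the apex rewrites it through $\tilde\lambda_{ij}=\lambda_{ij}+u_{i}+u_{j}$ with the apical decoration reset to zero, at the cost of a term $-\pi(u_{i}+u_{j}+u_{k})$ depending on $u$ alone. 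Substituting $\lambda_{ij}=\tilde\lambda_{ij}-u_{i}-u_{j}$ everywhere then splits $\mathcal{E}_{\T,\Phi,\Theta}$ as $G(\tilde\lambda)+C(u)$, a function of $\tilde\lambda$ plus a constant depending only on $u$. Because $\lambda\mapsto\tilde\lambda$ is, for each fixed $u$, an affine bijection of $\R^{E}$, the minimizing $\tilde\lambda$---and therefore the solution $\tilde\ell=e^{\tilde\lambda/2}$---is the same for every $u$, so one may take $u=0$. I expect this shift identity to be the main obstacle, as it is the only ingredient not immediately supplied by the derivative and convexity statements; the remainder is bookkeeping.
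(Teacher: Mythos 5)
Your argument is correct and is essentially the paper's own: the paper derives this proposition directly from the preceding computation of $\partial\mathcal{E}_{\T,\Phi,\Theta}/\partial\lambda_{ij}$ together with the parenthetical convexity remark, exactly as you do, so that critical points in $\lambda$ coincide with minimizers and with solutions of Problem~\ref{prob:circle_pattern}. Your third paragraph, proving the shift identity $\widehat V(\lambda_{ij}+s_i+s_j)=\widehat V(\lambda_{ij})+\tfrac{\pi}{2}\sum_i s_i$ from Schl\"afli's formula and the constancy of the dihedral angles under vertex-wise rescaling, is a correct and welcome elaboration of the claim that $u$ is arbitrary, which the paper asserts without explicit justification.
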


\subsection{Discrete conformal equivalence for circular polyhedral
  surfaces}
\label{sec:circular_meshes}

In this section, we generalize the notion of discrete conformal
equivalence from surfaces composed of triangles to surfaces composed
of polygons inscribed in circles. The variational principle described
below is like a mixture of the first variational principle for
discrete conformal maps of Proposition~\ref{prop:first_variational}
and the variational principle for circle patterns of
Proposition~\ref{prop:var_princ_for_cp_with_E}.

An (abstract) \emph{polyhedral surface} is a surface that is a CW
complex. A \emph{euclidean polyhedral surface} is a polyhedral surface
obtained by gluing euclidean polygons edge-to-edge. If all of the
polygons have a circumscribed circle, we speak of a \emph{(euclidean)
  circular polyhedral surface}. A circular polyhedral surface is
determined by the polyhedral surface $\mathsf P$ and the function
$\ell\in(\R_{>0})^{E_{\mathsf P}}$ that assigns to each edge its
length. Conversely, a function $\ell\in(\R_{>0})^{E_{\mathsf P}}$
defines a circular polyhedral surface if and only if it satisfies the
``polygonal inequalities'': In each polygon, the length of any edge is
smaller then the sum of lengths of the other edges. If $\ell$
satisfies these conditions, we denote the resulting circular
polyhedral surface by $(\mathsf P, \ell)$.

\begin{definition}
  Two circular polyhedral surfaces, $(\mathsf P, \ell)$ and $(\mathsf
  P, \tilde\ell)$, are \emph{discretely conformally equivalent} if
  $\ell$ and $\tilde\ell$ are related by equation~\eqref{eq:tilde_ell}
  for some function $u\in\R^{V_{\mathsf P}}$.
\end{definition}

To solve the discrete conformal mapping problems for circular
polyhedral surfaces that are analogous to those described in
Section~\ref{sec:mapping_problems}, proceed as follows: First
triangulate the non-triangular faces of the given circular polyhedral
surface $(\mathsf P,\ell)$ to obtain a euclidean triangulation
$(\T,\hat\ell)$ (where $\hat\ell:E_{\T}\rightarrow\R_{>0}$,
$\hat\ell|_{E_{\mathsf P}}=\ell$). Then define $\Phi$ by
equations~\eqref{eq:theta_ij_for_dconf} and minimize
$\mathcal{E}_{\T,\Phi,\Theta}(\lambda,u)$, where
$\lambda_{ij}=2\log\hat\ell_{ij}$ is held fixed if $ij\in E_{\mathsf
  P}$ and considered a variable if $E_{\T}\setminus E_{\mathsf P}$,
and the $u_{i}$ are variables or fixed depending on the mapping
problem, as in the case of triangulations. If $\tilde\ell$ determined
by equations~\eqref{eq:tilde_ell} and~\eqref{eq:lambda} for the
minimizing $(\lambda, u)$ satisfies the triangle inequalities, it is a
solution of the mapping problem.

Note that the values $\hat\ell_{ij}$ for edges $ij\in E_{\T}\setminus
E_{\mathsf P}$ do not enter because the corresponding $\lambda_{ij}$
are variables.

\subsection{Discrete circle domains}
\label{sec:circle_domains}

A domain in the Riemann sphere $\widehat\C$ is called a \emph{circle
  domain} if every boundary component is either a point or a
circle. Koebe conjectured that every domain in $\C$ is conformally
equivalent to a circle domain. For a simply connected domain, this is
just the Riemann mapping theorem.  Koebe himself proved the conjecture
for finitely connected domains, and after various generalizations by
several other people, He and Schramm gave a proof for domains with at
most countably many boundary components~\cite{he_fixed_1993}. (Their
proof is based on circle packings.)

The method for mapping to the sphere described in
Section~\ref{sec:sphere} works (\emph{mutatis mutandis}) also for the
circular polyhedral surfaces discussed in the previous section. This
allows us to map euclidean triangulations to ``discrete circle
domains'', that is, domains in the plane that are bounded by circular
polygons.

Suppose $(\T,\ell)$ is a euclidean triangulation that is topologically
a disc with holes. To map $(\T,\ell)$ to a discrete circle domain,
simply fill the holes by attaching a face to each boundary polygon and
map the resulting circular polyhedral surface to the sphere. 

Note that for a topological disk with $0$ holes, we recover in a
different guise the procedure for mapping to a disk that was described
in Section~\ref{sec:disk}.

\end{appendices}

\section*{Acknowledgments}

We are grateful to Richard Kenyon and G\"unter Ziegler, who pointed
out the connection with amoebas, to Ulrich Bauer, Felix G{\"u}nther,
Hana Kou\v{r}imsk\'{a}, and Mathias Oster, who spotted typos and instances of
questionable style in earlier versions of this paper, and to the
referee whose comments helped us improve this paper still further. Any
remaining mistakes and shortcomings are our responsibility.

Stefan Sechelmann produced Figure~\ref{fig:pretzel} using software
written by himself in \emph{Java} together with the Java-based 3D
visualization package \emph{jReality}.

The other figures of discrete conformal maps were made with
\emph{Blender} and \emph{Python} scripts written by the third author,
which rely on other libraries to do the real work: the \emph{GNU
  Scientific Library}, providing an implementation of Clausen's
integral, the convex optimization library \emph{CVXOPT} by
M.~Andersen, J.~Dahl and L.~Vandenberghe, and J.~Shewchuk's mesh
generator \emph{Triangle}.

This research was supported by SFB/TR 109 ``Discretization in Geometry
and Dynamics'', the DFG Research Unit ``Polyhedral surfaces'' and by
the DFG Research Center \textsc{Matheon}.

\small
\bibliographystyle{gtart}
\bibliography{dconf}

\end{document}